\newtheorem{theorem}{Theorem}
\DeclareMathOperator{\h}{\mathbb H^2}
\DeclareMathOperator{\hr}{\mathbb H^2\times\mathbb R}
\DeclareMathOperator{\hh}{\mathbb H^2}
\DeclareMathOperator{\D}{\mathbb D}
\DeclareMathOperator{\rr}{\mathbb R}
\DeclareMathOperator{\esf}{\mathbb S}
\DeclareMathOperator{\psl}{\widetilde{\rm PSL}_2 (\rr,\tau)}
\DeclareMathOperator{\ppsl}{\widetilde{\rm PSL}_2 (\rr,\tau)}
\DeclareMathOperator{\C}{\mathbb C}
\DeclareMathOperator{\sech}{sech}
\newtheorem{lemma}{Lemma}
\newtheorem{claim}{Claim}
\newtheorem{remark}{Remark}
\newtheorem{corollary}[theorem]{Corollary}
\theoremstyle{definition}\newtheorem{definition}{Definition}
\numberwithin{equation}{section}
\begin{document} 
\begin{title}
    {On the characterization of minimal surfaces with finite total
    curvature in $\hr$ and $\widetilde{\rm PSL}_2 (\rr)$}
\end{title}
\vskip .2in

\begin{author} {Laurent Hauswirth, Ana Menezes and Magdalena Rodr\'\i
    guez\thanks{Research partially supported by the MCyT-FEDER
      research project MTM2014-52368-P, MTM2017-89677-P, and by the GENIL research project
      no. PYR-2014-21 of CEI BioTic GRANADA.}}
\end{author}

\date{}

 \maketitle

\begin{abstract}
It is known that a complete immersed minimal surface with finite total
curvature in $\mathbb H^2\times\mathbb R$ is proper, has finite
topology and each one of its ends is asymptotic to a geodesic polygon
at infinity (Hauswirth and Rosenberg, 2006; Hauswirth, Nelli, Sa Earp
and Toubiana, 2015). In this paper we prove that these three
properties characterize complete immersed minimal surfaces with finite
total curvature in $\mathbb H^2\times\mathbb R$. As corollaries of
this theorem we obtain characterizations for minimal Scherk-type
graphs and horizontal catenoids in $\mathbb H^2\times\mathbb R$. We
also prove that if a properly immersed minimal surface in
$\widetilde{\rm PSL}_2(\mathbb{R},\tau)$ has finite topology and each
one of its ends is asymptotic to a geodesic polygon at infinity, then
it must have finite total curvature.
\end{abstract}

\section{Introduction}

The theory of finite total curvature minimal surfaces in $\hr$ has
been introduced by Collin and Rosenberg \cite{CR}. They remarked by
Fatou's convergence theorem in Gauss-Bonnet formula that complete
minimal graphs over ideal polygonal domain of $\h$ with a finite
number of vertices (called Scherk-type graphs) have finite total
curvature in $\hr$. Together with the vertical geodesic planes these
were the first examples appearing in the theory.  
Later, Hauswirth and Rosenberg~\cite{hr} proved that the total
curvature of these surfaces is a multiple of $2Ê\pi$. They also began
to describe their asymptotic geometric behavior at infinity and this
description has been later completed by Hauswirth, Nelli, Sa Earp and
Toubiana \cite{hnst}. They proved that any complete minimal surface
with finite total curvature in $\hr$ is proper, has finite topology
and each one of its ends is asymptotic to an admissible polygon at
infinity (see Definition~\ref{def:polygon} below).

The asymptotic boundary $\partial_{\infty}\mathbb H^2$ of $\mathbb
H^2$ can be identified with the unit circle. There are different
notions of asymptotic boundary $\partial _\infty (\hr)$ of
$\hr$. In this paper we use the product compactification obtained as
the product of the compactications of each one of the factors. This
is, we consider the following model for $\partial _\infty (\hr)$:
$((\partial_{\infty}\hh)\times[-1,+1])\cup(\hh\times\{\pm1\}),$ where
we represent the second factor $\mathbb R$ by some homeomorphism
$\phi: \mathbb R \to (-1,1)$.

We say that $p \in \partial _\infty (\hr)$ is in the asymptotic
boundary of a minimal surface ${\cal M}$ if there is a diverging
sequence of points $p_n \in {\cal M}$ such that $p_n$ converges to $p$
in the compactification. This means that if $p_n=(z_n, t_n) \in {\cal
  M}$ and $p=(a,h) \in \partial_\infty( \hr)$, then $z_n \to a$ in the
compactification of $\hh$ and $\phi (t_n) \to h$ in $[-1,1]$.

Given a vertical geodesic plane ${\cal M}= \alpha \times \mathbb R$,
where $\alpha$ is a horizontal geodesic with two endpoints $a_1,a_2
\in \partial _\infty \h$, we have $\partial _\infty {\cal M} =(\alpha
\times \{\pm1\})\cup (\{a_1,a_2\} \times [-1,1])$. This boundary can
be viewed as a quadrilateral curve at infinity. We generalize this
construction by the following definition.

\begin{definition}[Admissible polygon at infinity]
  We call {\em polygon at infinity} to any (connected, closed) polygon
  in $\partial_\infty(\hr)$ composed of a finite number of geodesics.
  We say that a polygon at infinity ${\cal P}$ is {\em admissible} if
  there exists an even number of geodesics $\alpha_1,\beta_1, ...,
  \alpha_k,\beta_k\subset\hh$ such that $\mathcal P$ is the union of
  the geodesics at infinity $\alpha_i\times\{+1\}$ and
  $\beta_i\times\{-1\}$, with $i=1, ..., k$, together with the
  corresponding vertical straight lines $L_i=\{a_i\}\times[-1,1]$,
  $a_i\in\partial_\infty\hh$, joining their endpoints (see
  Figure \ref{polygonal}).
  \label{def:polygon}
\end{definition}

\begin{definition}[Embedded Admissible polygon]
We say that an admissible polygon ${\cal P}$ is embedded if there
exists a one-to-one correspondence from $\esf^1$ to~${\cal P}$.
\end{definition}

We observe that the projection over $\hh$ of an embedded admissible
polygon at infinity can be non embedded, as Figure~\ref{Gamma1}-right
shows. This admissible polygon at infinity corresponds to the
asymptotic boundary of an example contructed by Pyo and the third
autor in~\cite{PR}, called a Twisted-Scherk example, that is a
properly embedded minimal disk with finite total curvature. In its
construction, we may turn $\alpha_2$ (see Figure~\ref{Gamma1}, right)
in the positive direction until it shares an endpoint with $\alpha_1$
and the other one with $\alpha_3$ ($\beta_3$ then shares an endpoint
with $\beta_1$ and the other with $\beta_2$). The polygon at infinity
we get is admissible but non embedded, and it corresponds to the
asymptotic boundary of a properly embedded minimal example.  This
example shows that the asymptotic boundary of a complete embedded
minimal surface with finite total curvature can be non embedded.

\begin{figure}[h]
 \centering
\includegraphics[height=4cm]{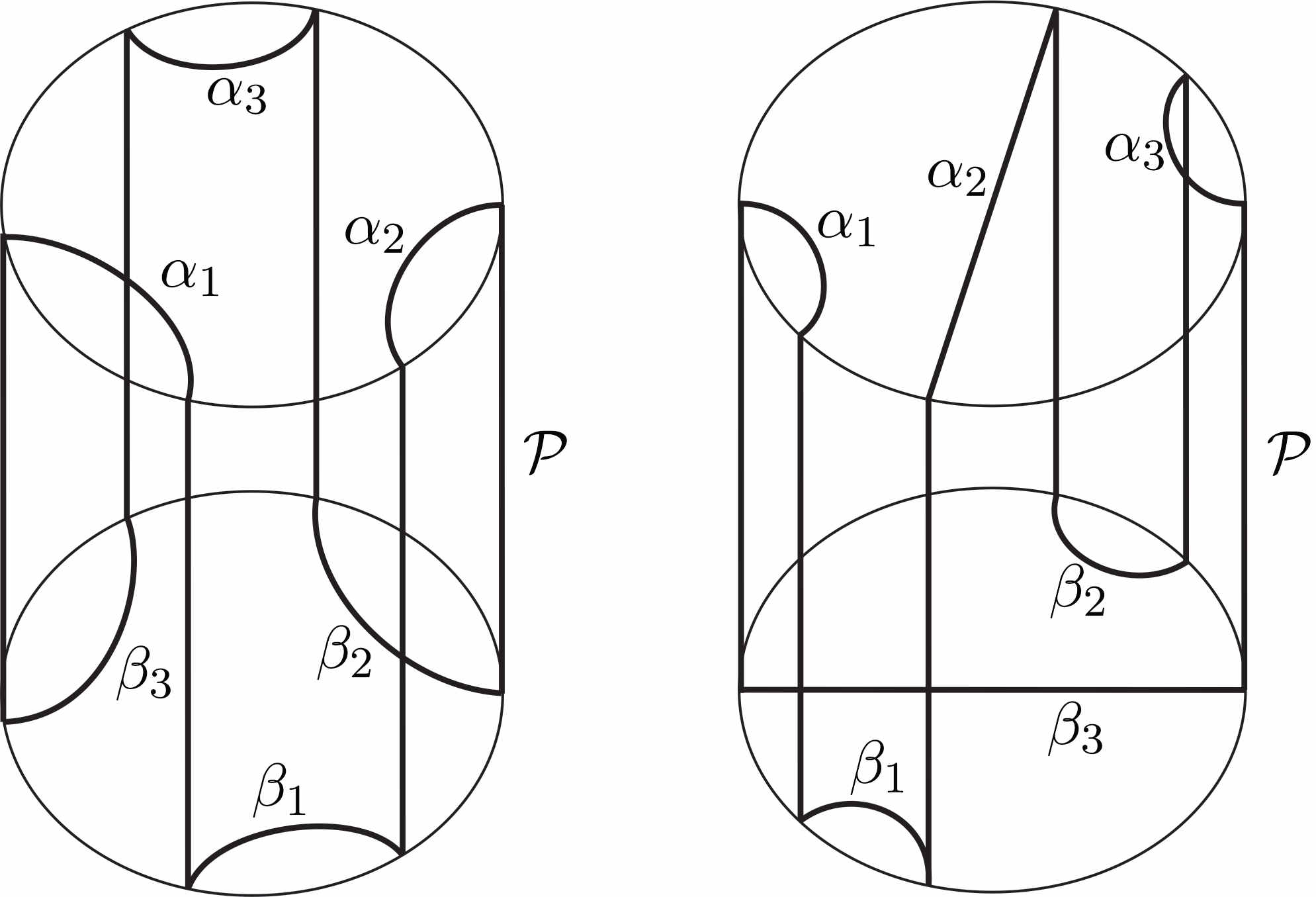}
\caption{Examples of embedded admissible polygons at infinity.}
\label{polygonal}
\end{figure}

Let us now describe the asymptotic behavior of finite total curvature
minimal surfaces in $\hr$.  A finite total curvature minimal surface
${\cal M}$ has finite topology, hence its ends are annular.  Let
$\Sigma$ be an annulus with the topology of $\esf^1\times(0,+\infty)$
and $X:\Sigma\rightarrow\hr$ be a proper minimal immersion. We call
$M=X(\Sigma)$. It is proved in~\cite[Lemma 2.3]{hnst} that, for $t_0
>0$ large enough, $M\cap\{t>t_0\})$ (resp. $M\cap\{t<-t_0\})$)
corresponds to a finite number of connected components $U_1, ...,U_k$
(resp. $V_1,...,V_k$) in $\Sigma$. Finite total curvature implies that
the curvature is uniformly bounded, converges uniformly to $0$ at
infinity and the tangent planes become vertical. For each $U_i$, there
exists a geodesic $\alpha_i \subset \h$ such that $X(U_i)$ is a
horizontal Killing graph (see Definition~\ref{killinggraph} below)
over $\alpha_i \times \mathbb R$ and $\partial_\infty X(U_i) \subset
\partial_\infty (\alpha_i \times \mathbb R)$ (similarly for $X(V_i)$,
for some geodesic $\beta_i\subset \h$). Moreover, for any vertical
line $\{a\} \times [-1,1]\subset {\cal P}$, there exists a horodisk
${\mathcal H}$ with $\partial_\infty {\mathcal H}=\{ a\}$ such that $M
\cap ({\mathcal H} \times \mathbb R)$ corresponds to a finite number
of connected components $W_1, ...,W_k$ in $\Sigma$. Each $X(W_i)$ is a
horizontal Killing graph over $\alpha \times \mathbb R$, where
$\alpha\subset \h$ is a geodesic having $a$ as an endpoint. Therefore,
this proves that there exists an admissible polygon at infinity
containing $\partial_\infty M$.

\begin{figure}[h]
 \centering
\includegraphics[height=3cm]{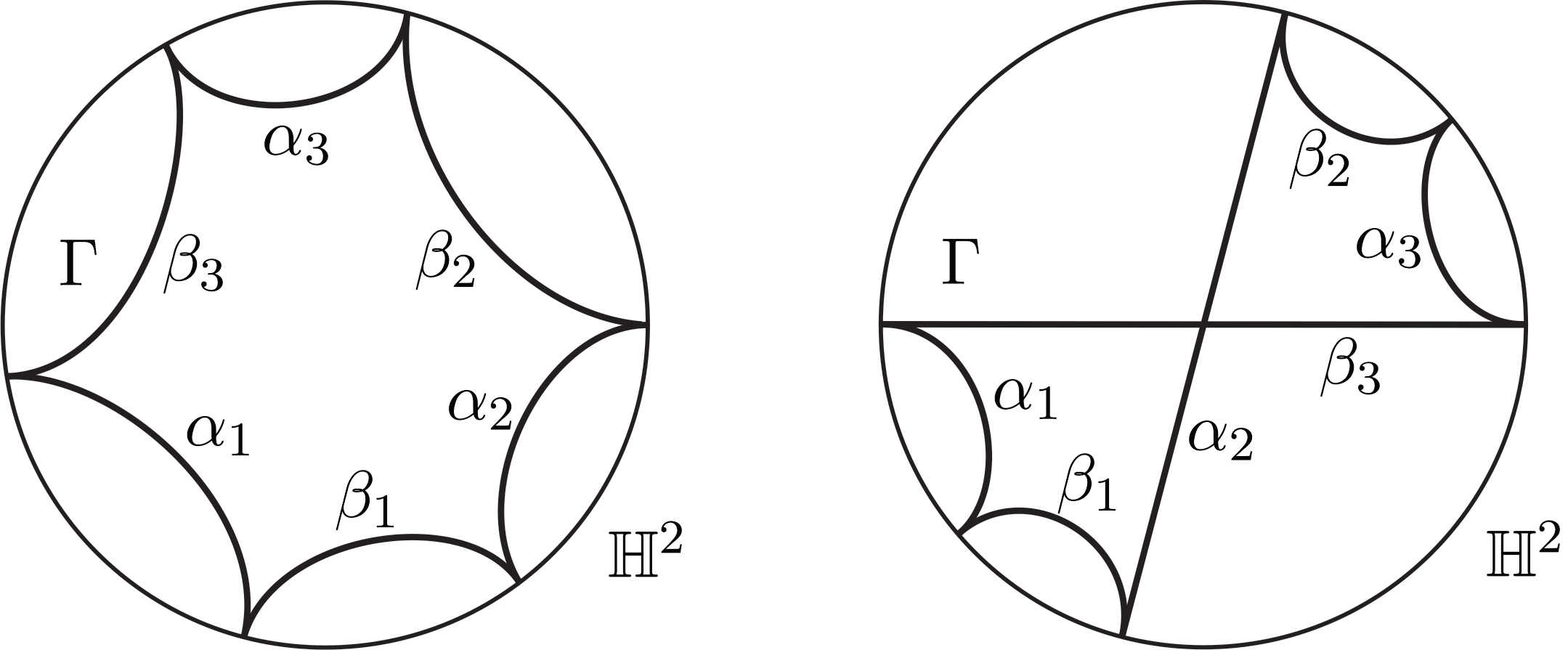}
\caption{Projection over $\hh$ of the embedded admissible polygons at
  infinity in Figure \ref{polygonal}.}
\label{Gamma1}
\end{figure}

The consequence of this behavior implies some classification theorems.
If the projection of ${\cal P}$ is embedded and the surface ${\cal M}$
is embedded and has finite total curvature, we can begin the
Alexandrov method of moving planes using horizontal slices coming from
above, and we obtain that the only one-end complete embedded minimal
surface ${\cal M}$ of finite total curvature with $\partial _\infty
{\cal M} = {\cal P}$ is a Jenkins-Serrin's type graph over the ideal
domain bounded by the projection of ${\cal P}$ (see
Theorem~\ref{th:graphs}). Another application is a Schoen's type
theorem for minimal annuli. Pyo \cite{p} and Morabito-Rodr\'\i guez
\cite{MoR} have constructed minimal annuli with total curvature
$4\pi$. The ends are asymptotic to two vertical geodesic planes
$\alpha_1 \times \rr$ and $\alpha_2 \times \rr$. These annuli are
called horizontal catenoids.

\begin{theorem}
\label{schoen}
\cite{hnst} A complete and connected minimal surface immersed in $\hr$
with nonzero finite total curvature and two ends, each one asymptotic
to a vertical geodesic plane, is a horizontal catenoid.
\end{theorem}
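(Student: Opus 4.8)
The plan is to characterize $\mathcal M$ by an Alexandrov moving-planes argument whose reflection walls are vertical geodesic planes. First I would extract the asymptotic picture from the results quoted above: finite total curvature makes $\mathcal M$ proper with finite topology, and its two ends $E_1,E_2$ are, outside a compact set, horizontal Killing graphs over $\alpha_1\times\rr$ and $\alpha_2\times\rr$ that converge to these vertical planes, with second fundamental form decaying uniformly to zero. Reading off $\partial_\infty\mathcal M$ from its admissible polygon, the two geodesics $\alpha_1,\alpha_2\subset\hh$ must be disjoint and share no ideal endpoint, since otherwise the two graphical ends would be forced to interpenetrate near infinity. Any two such geodesics admit a common perpendicular $\gamma$ and a bisecting geodesic $\gamma_0$, and the reflection $\sigma$ of $\hr$ across the vertical plane $\gamma_0\times\rr$ exchanges $\alpha_1\times\rr$ and $\alpha_2\times\rr$; this $\sigma$ is the candidate symmetry. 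Before reflecting I would check that $\mathcal M$ is embedded: each end is embedded because it is a graph, while the uniform curvature decay and properness confine $\mathcal M$ outside a compact set to two disjoint thin neighborhoods of the planes, and an interior self-intersection would be excluded by the maximum principle once the reflection is in place.

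The heart of the proof is the reflection. I would foliate the side of $\gamma_0\times\rr$ containing $E_1$ by the vertical geodesic planes orthogonal to $\gamma$, slide them toward $\gamma_0\times\rr$, and at each stage reflect across the current wall the part of $\mathcal M$ already swept. Using that $E_1$ is asymptotic to $\alpha_1\times\rr$, the process can be started from a wall close to the ideal geodesic $\alpha_1$, where the reflected piece lies strictly on the $E_2$-side of $\mathcal M$ and is disjoint from it. I would then decrease the parameter to the first moment of contact: by the interior maximum principle (if contact is at a finite interior point) or the boundary version (if the two sheets first meet the wall orthogonally), $\mathcal M$ and $\sigma(\mathcal M)$ coincide, so $\mathcal M$ is invariant under $\sigma$. \emph{The main obstacle lies exactly here:} since $\mathcal M$ is noncompact and both ends run out to the asymptotic boundary, the first contact may a priori occur ``at infinity'', and the interior argument must be complemented by a maximum principle at infinity. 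Making that rigorous requires quantitative, e.g.\ exponential, control of the rate at which each end converges to its vertical plane, which is where the bulk of the analytic work sits.

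An analogous sliding with a second family of vertical geodesic planes, now orthogonal to $\gamma_0$ and swept toward $\gamma\times\rr$, produces in the same way invariance of $\mathcal M$ under reflection across $\gamma\times\rr$. At this stage $\mathcal M$ carries the two orthogonal reflection symmetries of the horizontal catenoid and is reconstructed from its restriction to one of the four regions cut out by $\gamma\times\rr$ and $\gamma_0\times\rr$, which it meets orthogonally.

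Finally I would identify $\mathcal M$ with a catenoid. The horizontal catenoids of Pyo and Morabito--Rodr\'\i guez having $\alpha_1\times\rr$ and $\alpha_2\times\rr$ as asymptotic planes share exactly these symmetries and the same asymptotic boundary as $\mathcal M$; comparing $\mathcal M$ with this family and sliding a member (vertically, and through the sizes of the waist) until it first touches $\mathcal M$, the maximum principle forces the two to coincide. The hypothesis of nonzero total curvature guarantees that $\mathcal M$ is not totally geodesic, consistent with the genuine neck of the catenoid, so the surface is indeed a horizontal catenoid.
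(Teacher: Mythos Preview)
Your overall plan---use finite total curvature to get graphical ends with quantitative decay, then run Alexandrov with vertical geodesic walls---is exactly the route the paper attributes to \cite{hnst} (the theorem is only quoted here, not reproved), and you correctly flag the main analytic obstacle, namely excluding first contact at infinity, which is where the exponential asymptotics are spent.

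Two gaps remain. The paper's outline produces \emph{three} symmetry planes, not two: besides $\gamma_0\times\rr$ and $\gamma\times\rr$ one also runs Alexandrov with horizontal slices to obtain a horizontal plane of symmetry $\hh\times\{t_0\}$ (compare Theorem~\ref{th:symmetry}). The upshot is that $\mathcal M$ is a geodesic \emph{bigraph} over each of its three symmetry planes, and it is this bigraph structure that drives the identification with the catenoid, not an external comparison.

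More seriously, your final step does not work as written. For fixed asymptotic planes $\alpha_1\times\rr$, $\alpha_2\times\rr$ the horizontal catenoid is essentially unique; there is no ``waist size'' parameter to slide through. And sliding the catenoid $C$ vertically cannot be initialized: both $C$ and $\mathcal M$ extend to $t=\pm\infty$ along the same vertical planes, so there is no height at which $C\cap\mathcal M=\emptyset$, and the maximum-principle comparison never starts. The argument in \cite{hnst} instead exploits the bigraph property to reduce to a uniqueness statement for a graphical piece, which is a different mechanism from the one you propose.

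A smaller point: your embeddedness argument is circular---you invoke the reflection to rule out interior self-intersections, but Alexandrov in its standard form needs embeddedness to begin. Since the hypothesis is only ``immersed'', this requires a separate treatment before the moving planes can be launched.
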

The subtle thing is to define correctly the notion of asymptotic
behavior at infinity of each end which permits to begin the Alexandrov
method of moving planes. This will imply that the annulus has three
geodesic planes of symmetry and is a geodesic bigraph (see
Definition~\ref{geodgraph} below) on each of them. This will be enough
to next conclude that the annulus is exactly a horizontal
catenoid. The asymptotic hypothesis and finite total curvature assumed
by the authors in Theorem \ref{schoen} can be rewritten in a strong
geometric hypothesis: Each end $M_i$ is assumed to be embedded and
additionally a horizontal Killing graph outside a compact set on some
vertical geodesic plane which converges uniformly to zero at
infinity. These hypotheses are similar to the one used in the original
work of R. Schoen for minimal surfaces in $\rr^3$ with two ends which
are graphs over non compact domains of some planes.  Alexandrov moving
planes technique can be initiate at infinity with this behavior of
$M_i$ (see \cite{hnst}). We remark that R. Sa Earp and E. Toubiana obtains
characterization of finite total curvature assuming stability, hence bounded
uniform curvature at infinity.

\medskip 

Let us now introduce some definitions that we use to define a weaker
notion of asymptoticity to an admissible polygon at infinity. We will
define this asymptoticity in a topological meaning rather than a
geometric one.  For that, we fix an admissible polygon at infinity
$\mathcal P$.  Given a point $a$ at infinity of $\hh$, we consider a
foliation given by a monotone family of horocylinders $\{{\mathcal
  H}(c)\} _{c\in\rr}$ with boundary $\{a \} \times[-1,1]$ at infinity.

\begin{definition}
  We say that $E$ is a {\it horizontal sheet} of $M\cap \bigcup_{c\geq c_0}{\mathcal H}(c)$, for some $c_0\in\rr$, if there exists a
  connected component $U$ of $X^{-1}(M\cap\{{{\mathcal H}(c)}; c\geq
  c_0\})$ such that $E=X(U)$.
  \label{def:asymp}
\end{definition}

\begin{definition}
  We say that $E$ is a {\it  vertical  sheet} of $M\cap\{t>t_0\}$
  (resp. $M\cap\{t<-t_0\}$), for some $t_0\in\rr$, if there
  exists a connected component $U$ of $X^{-1}(M\cap\{t>t_0\})$
  (resp. $X^{-1}(M\cap\{t<-t_0\})$) such that $E=X(U)$.
  \label{def:asymp}
\end{definition}

We observe that a horizontal (resp. vertical) sheet $E$ is not
necessarily a connected component of $M\cap\{{{\mathcal H}(c)}; c\geq
c_0\}$ (resp. $M\cap\{t>t_0\}$), since we do not assume $M$
necessarily embedded.

\begin{definition}
We say that $M$ is {\it asymptotic} to an embedded admissible polygon at
 infinity ${\cal P}$ if $\partial_\infty M\subset{\cal P}$.
 
 If ${\cal P}$ is not embedded, we say that $M$ is {\it asymptotic} to
 the admissible polygon at infinity ${\cal P}$ if $\partial_\infty
 M\subset{\cal P}$ and there exists $t_0>0$ such that the following
 assertions hold:
    \begin{itemize}
    \item For any vertical sheet $E$ of $M\cap\{t>t_0\}$, there exists some
      $i=1, ..., k$ such that $\partial_\infty
      E\subset\partial_\infty(\alpha_i\times\rr)$.
    \item For any vertical sheet $E$ of $M\cap\{t<-t_0\}$, there exists some
      $i=1, ..., k$ such that $\partial_\infty
      E\subset\partial_\infty(\beta_i\times\rr)$.
    \end{itemize}
  \label{def:asymp}
\end{definition}

\begin{remark} There is no assumption on horizontal sheets in the last definition.
\end{remark}

This notion of asymptoticity is topological. Next we define a stronger definition of
asymptoticity which shares more geometric hypothesis. 

\begin{definition}
We say that ${ M}$ is an {\it asymptotic multigraph at infinity} to
${\mathcal P}$ if $M$ is {\it asymptotic} to ${\mathcal P}$ and any
vertical and horizontal sheet $E$ can be written outside a compact set
as a horizontal graph over a domain of $\alpha \times \rr$, for some
well chosen geodesic $\alpha$. The {\it asymptotic multigraph} is {\it
  Killing} (see definition 7) if $E$ is a graph along horocycles
orthogonal to $\alpha \times \rr$ while the {\it asymptotic
  multigraph} is {\it geodesic} (see definition 8) if $E$ is a graph
along geodesic orthogonal to $\alpha \times \rr$.
\end{definition}

R. Sa Earp and E. Toubiana in \cite{ET1,ET2} has studied 
characterization of finite total curvature assuming stability.
Stability say that the curvature on ends is bounded and using appropriate 
barriers, they can consider surfaces asymptotically multigraph at infinity
as in the previous definition or graph on some slice $\hh \times \{0\}$.

In the following theorem we prove that the weak condition of asymptoticity
is sufficient to prove finite total curvature. We provide uniform bound of the
curvature by proving that properly immersed surfaces are asymptotically multigraph 
at infinity converging uniformly to zero.

\begin{theorem}
  Let ${\cal M}\subset \hr$ be a properly immersed minimal surface
  with finite topology and possibly compact boundary. Suppose that
  each end $M$ of ${\cal M}$ is asymptotic to an admissible polygon at
  infinity. Then $M$ is both a geodesic and Killing asymptotic
  multigraph at infinity and ${\cal M}$ has finite total curvature.
  \label{thm-2}
\end{theorem}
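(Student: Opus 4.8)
The plan is to prove this by establishing a dichotomy of scales: away from the vertical lines $L_i$ the ends look like the vertical sheets (graphs over $\alpha_i\times\rr$ or $\beta_i\times\rr$), near the $L_i$ they look like horizontal sheets (graphs over a geodesic having $a_i$ as an endpoint), and both pieces have uniformly decaying curvature. Once uniform curvature decay is in hand, finite total curvature follows because the Gauss curvature of a minimal surface in $\hr$ satisfies $-1\le K\le 0$ pointwise and is controlled by $|A|^2$, so integrability over the annular end reduces to showing that $\int_{\text{end}}|A|^2\,dA<\infty$, which in turn is a consequence of quadratic area growth plus quadratic decay of $|A|^2$. So the heart of the matter is the curvature estimate, and the multigraph conclusion is essentially a byproduct of the same analysis.

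\medskip

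First I would set up a blow-down / intrinsic-distance argument to show that $|A|\to 0$ uniformly on each end. The key tool is a curvature estimate for properly immersed minimal surfaces that are contained, outside a compact set, in a slab-like region determined by the polygon $\mathcal P$. Concretely, I would argue by contradiction: if $|A|(p_n)\not\to 0$ along a diverging sequence $p_n$, then after translating by ambient isometries of $\hr$ (the isometry group acts transitively enough to recenter each $p_n$ at a fixed basepoint) one extracts a smooth limit minimal surface $M_\infty$ passing through the basepoint with $|A|(p_0)\ge\delta>0$. The asymptoticity hypothesis $\partial_\infty M\subset\mathcal P$, together with the control on vertical sheets in Definition~\ref{def:asymp}, forces $M_\infty$ to be a complete minimal surface whose asymptotic boundary is contained in the asymptotic boundary of a single vertical geodesic plane $\alpha_i\times\rr$ (or $\beta_i\times\rr$); by a maximum-principle/half-space-type rigidity argument $M_\infty$ must then be that vertical geodesic plane itself, which is totally geodesic and hence has $|A|\equiv 0$, contradicting $|A|(p_0)\ge\delta$. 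This yields uniform decay of $|A|$ on the ends.

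\medskip

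Next, with $|A|$ small on the end, I would upgrade the topological asymptoticity to the geometric multigraph statement. Where the end stays a bounded intrinsic distance from $\alpha_i\times\rr$, smallness of the second fundamental form means the tangent planes are uniformly close to vertical, so each vertical sheet $E$ can be written as a horizontal graph over a domain of $\alpha_i\times\rr$; writing the graph along horocycles orthogonal to $\alpha_i\times\rr$ gives the \emph{killing} multigraph, and writing it along geodesics orthogonal gives the \emph{geodesic} multigraph, the two descriptions being interchangeable by a fixed bi-Lipschitz reparametrization of a half-plane. The horizontal sheets near each vertical line $L_i=\{a_i\}\times[-1,1]$ are handled identically using the horocylinder foliation $\{\mathcal H(c)\}$ from Definition~\ref{def:asymp}: small $|A|$ forces those sheets to be graphs over a geodesic asymptotic to $a_i$, matching the description already supplied by \cite[Lemma 2.3]{hnst}. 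Assembling the finitely many sheets over the annular end gives both multigraph structures simultaneously.

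\medskip

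Finally I would conclude finite total curvature. From the graph representation one reads off that $|A|$ decays at a definite rate (exponentially in the intrinsic distance to the core, by comparison with the linearized operator — the stability/Jacobi operator on the asymptotically-vertical-plane end has a spectral gap), and that the area of $M\cap B_R$ grows at most quadratically in $R$. Combining the decay of $|A|^2$ with the area growth makes $\int_M |A|^2\,dA$ finite, and since for minimal surfaces in $\hr$ one has $K=-1+|A|^2/(\text{controlled factor})$ up to the ambient term, $\int_M K\,dA$ converges; hence $\mathcal M$ has finite total curvature. I expect the \textbf{main obstacle} to be the compactness step: because $M$ is only immersed (not embedded) and the sheets may overlap, one must rule out the limit $M_\infty$ being a nontrivial multiplicity or a catenoidal neck rather than a vertical plane, and this requires carefully exploiting that each \emph{individual} vertical sheet $E$ has asymptotic boundary inside a \emph{single} plane $\alpha_i\times\rr$ — precisely the content of the sheet conditions in Definition~\ref{def:asymp} — rather than merely $\partial_\infty M\subset\mathcal P$. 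Controlling this at the vertices where two geodesics of $\mathcal P$ meet, where several sheets can come together, is the delicate point.
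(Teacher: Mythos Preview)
Your proposal has a genuine circularity in the first step. To ``extract a smooth limit minimal surface $M_\infty$'' after translating the points $p_n$ back to a basepoint \emph{without rescaling}, you need uniform local area bounds and uniform curvature bounds on the translated pieces --- but a uniform curvature bound is exactly what you are trying to prove. If instead you rescale by $|A|(p_n)$, the limit lives in $\rr^3$ and the asymptotic information $\partial_\infty M\subset\mathcal P$ is lost, so your rigidity step no longer applies. Even granting a limit in $\hr$, the claim that $\partial_\infty M_\infty\subset\partial_\infty(\alpha_i\times\rr)$ does not follow: after translating a diverging sequence, the asymptotic boundary of the limit is determined by the \emph{local} geometry near $p_n$, not by the global $\partial_\infty M$, and there is no half-space theorem in $\hr$ asserting that a complete minimal surface with asymptotic boundary inside $\partial_\infty(\alpha\times\rr)$ must equal that plane. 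Your second step is also incomplete: pointwise smallness of $|A|$ on a simply-connected sheet confined to a slab does not by itself prevent the normal from rotating through a full turn over large distances, so it does not directly yield a global horizontal graph.

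The paper avoids this circularity by reversing the order of operations. It first uses the barriers $S_h$, $S_\infty$ (Subsection~\ref{subsub:Sh}) to confine each vertical sheet to an arbitrarily thin slab $\mathcal R_\epsilon$ about $\alpha\times\rr$, and then applies the Dragging Lemma (Lemma~\ref{DL}) with the compact stable annulus $A_0$ of Section~\ref{sec-annulus} --- a purely topological continuation argument based on the maximum principle, requiring \emph{no} curvature estimate --- to show that the sheet is transverse to the horizontal field $Y$, hence a horizontal geodesic (and, by a variant of the same argument, Killing) graph. Only \emph{after} the graph structure is in hand does the paper invoke stability of Killing graphs, or the blow-up Lemma~\ref{bcurv} for geodesic graphs, to obtain uniform curvature bounds, and then conformal decay estimates on the function $\omega$ (Subsection~\ref{subsec:ftc}) to conclude finite total curvature. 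The Dragging Lemma is the key ingredient missing from your scheme: it delivers graphicality directly, without presupposing that $|A|$ is small.
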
 

Theorem \ref{thm-2} is an immediate consequence of the next theorem
using the fact that finite topology implies that each end $M$ is
annular.

\begin{theorem}\label{th:mainhr}
  Let $M\subset \hr$ be a properly immersed minimal annulus with one
  compact boundary component $\partial M$ and asymptotic to an
  admissible polygon ${\cal P}$ at infinity. Then $M$ is an asymptotic
  geodesic and Killing multigraph at infinity and has finite total
  curvature.
\end{theorem}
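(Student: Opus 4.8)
The plan is to study the annular end $M$ through the holomorphic data attached to the minimal immersion, to obtain a uniform estimate and then the decay of the second fundamental form near infinity, and only afterwards to read off both the multigraph structure and the finiteness of the total curvature. First I would fix a conformal parametrization $X=(F,h)\colon\Sigma\to\hr$ of the end, with $\Sigma=\mathbb S^1\times(0,\infty)$. Minimality makes $F\colon\Sigma\to\hh$ a harmonic map and $h\colon\Sigma\to\rr$ a harmonic function, so $\partial h$ is a holomorphic $1$-form; conformality, $\langle X_z,X_z\rangle=0$, forces the Hopf differential of $F$ to equal $-(\partial h)^2$. Thus the immersion is encoded by the single holomorphic object $\partial h$ together with the angle function $\nu=\langle N,\partial_t\rangle$, and the problem becomes one of controlling these quantities at the puncture.

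The heart of the matter is a uniform bound $|A|\le C$ on the end, and it is here that the topological hypothesis $\partial_\infty M\subset\mathcal P$ enters. I would argue by contradiction: if $|A|$ is unbounded, I would choose a divergent sequence $p_n\in M$ with $\lambda_n:=|A|(p_n)\to\infty$ realizing, after a point-picking normalization, an almost-maximum of $|A|$ nearby, and rescale the ambient metric by $\lambda_n$ about $p_n$. Since $\hr$ is homogeneous with bounded geometry, the rescaled ambient spaces converge to flat $\rr^3$, and a subsequence of $M$ converges to a complete nonflat minimal surface $\Sigma_\infty\subset\rr^3$ with $|A_{\Sigma_\infty}|(0)=1$. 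The asymptotic condition is what rules this out: for $t_n\to+\infty$ the vertical-sheet hypothesis places $p_n$ near one of the vertical planes $\alpha_i\times\rr$ (and for $t_n\to-\infty$ near some $\beta_i\times\rr$), while for $p_n$ escaping in the $\hh$-direction it is confined near one of the vertical lines $L_i$; in each case the neighbouring faces of $\mathcal P$ rescale to planes that trap $\Sigma_\infty$ on one side, and a halfspace and maximum-principle comparison against these planar barriers forces $\Sigma_\infty$ to be a plane, contradicting $|A_{\Sigma_\infty}|(0)=1$. I expect this to be the principal obstacle, both because the hypothesis is purely topological, with no control on horizontal sheets, and because $M$ is only immersed, so the barrier comparison and the multiplicity-one behaviour of the blow-up limit must be set up with care.

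Once $|A|$ is bounded, the end has bounded geometry, and properness together with the confinement near the faces of $\mathcal P$ shows that $M$ has quadratic area growth; hence the end is parabolic and conformally a punctured disk $\mathbb D^{*}$, on which $\partial h$ extends meromorphically. The absence of an essential singularity then yields the decay $|A|\to0$ at infinity, with the unit normal becoming horizontal, that is, the tangent planes becoming vertical. With $|A|\to0$ and vertical limiting tangent planes, each vertical and each horizontal sheet $E$ is, outside a compact set, a local graph with gradient tending to zero over a domain of the appropriate $\alpha\times\rr$; expressing this graph along the horocycles orthogonal to $\alpha\times\rr$ gives the killing multigraph structure, and expressing it along the orthogonal geodesics gives the geodesic multigraph structure. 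Thus $M$ is simultaneously a killing and a geodesic asymptotic multigraph at infinity.

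Finally, since $|A|\to0$ and the cross-sections of the end, cut by horocylinders or by horizontal slices, have uniformly bounded length, integrating gives $\int_M|A|^2\,dA<\infty$, equivalently $\int_M K\,dA>-\infty$, which is the asserted finiteness of the total curvature. The delicate point throughout remains the curvature bound of the second paragraph: the decay, the parabolicity, the multigraph property and the integral estimate are then relatively routine consequences, whereas passing from the merely topological asymptoticity to any uniform geometric control is where the real work lies.
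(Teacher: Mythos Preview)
Your blow-up argument for the uniform bound $|A|\le C$ has a genuine gap, and since you correctly identify this step as the heart of the proof, the proposal does not go through.  Using the barriers $S_h$, $S_\infty$ one can indeed confine each vertical sheet in a slab $\mathcal R_\epsilon$ of \emph{fixed} width $2\epsilon$ around $\alpha_i\times\rr$; this much is right.  But when you rescale the ambient metric by $\lambda_n=|A|(p_n)\to\infty$, the width of that slab becomes $2\epsilon\lambda_n\to\infty$, so in the limit the confining planes recede to infinity and impose no constraint whatsoever on $\Sigma_\infty$.  There is no halfspace to invoke: $\Sigma_\infty$ could perfectly well be a catenoid or a Scherk surface, and nothing in the hypothesis $\partial_\infty M\subset\mathcal P$ survives the rescaling to rule this out.  (Note also that the faces of $\mathcal P$ themselves are at infinity and do not give finite barriers at all; only the $S_\infty$-type surfaces do, and those are at fixed distance $\epsilon$.)

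The paper proceeds in the opposite logical order, and this reversal is the key idea.  It first proves the multigraph property \emph{directly}, with no curvature bound available, by a topological continuation argument (the Dragging Lemma): one slides small compact minimal annuli $A_0$, $C_\ell$ through the slab $\mathcal R_\epsilon$ and shows, via a parity count on intersection curves, that no point of the sheet can have its normal tangent to $\alpha\times\rr$.  This yields transversality to the horizontal field $Y$, hence the geodesic (and killing) graph structure.  Only \emph{then} does bounded curvature follow, and the paper's Lemma~1 makes clear why: the blow-up of a horizontal \emph{graph} remains a graph in $\rr^3$, hence is a plane by Bernstein.  So the graph structure is precisely the extra ingredient that makes the blow-up work, and you need a separate argument to obtain it first.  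Your proposed route---curvature bound $\Rightarrow$ parabolicity $\Rightarrow$ meromorphic extension $\Rightarrow$ multigraph---has the dependency backwards.
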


As a consequence we have a characterization for finite total curvature
minimal surfaces:

\begin{theorem}
  A complete minimal surface of $\hr$ has finite total curvature if
  and only if it is proper, has finite topology and each one of its
  ends is asymptotic to an admissible polygon at infinity.
\end{theorem}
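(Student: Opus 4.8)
The plan is to prove the two implications of the equivalence separately; both are essentially already in hand, so the task reduces to assembling the cited result with the main theorem of this paper and checking that the hypotheses align.

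For the ``only if'' direction I would suppose that ${\cal M}$ is complete with finite total curvature and invoke the theorem of Hauswirth, Nelli, Sa Earp and Toubiana recalled in the introduction. Their result \cite{hnst} states precisely that such a surface is proper, has finite topology, and that each of its ends is asymptotic to an admissible polygon at infinity in the strong geometric sense. Since geometric asymptoticity implies the weaker topological asymptoticity of Definition \ref{def:asymp}, the stated conclusion follows immediately.

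For the ``if'' direction I would suppose that ${\cal M}$ is proper, has finite topology, and that each end is asymptotic to an admissible polygon, and then apply Theorem \ref{thm-2}. Finite topology means ${\cal M}$ is diffeomorphic to a compact surface with finitely many punctures, so outside a large compact set it splits into finitely many annular ends; after truncation by a compact boundary curve, each such end satisfies exactly the hypotheses of Theorem \ref{th:mainhr}, namely a properly immersed minimal annulus with one compact boundary component asymptotic to an admissible polygon. That theorem then yields that each end is a geodesic and killing asymptotic multigraph and has finite total curvature. The truncating compact core is compact with bounded curvature, hence of finite total curvature, and summing the finitely many end contributions with the core contribution gives finite total curvature for all of ${\cal M}$.

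The only delicate point, and hence the main (if modest) obstacle, will be the passage from ``finite topology'' to ``finitely many annular ends, each with one compact boundary component.'' I would choose the truncating compact region large enough that its complement is a disjoint union of annuli---possible precisely because the topology is finite---and confirm that each resulting annulus inherits the asymptoticity hypothesis from the corresponding end of ${\cal M}$. Once this bookkeeping is in place, the equivalence is an immediate corollary of \cite{hnst} and Theorem \ref{thm-2}.
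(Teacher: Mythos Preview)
Your proposal is correct and follows exactly the route the paper takes: the paper states this characterization as an immediate corollary of the results in \cite{hnst} (for the ``only if'' direction) together with Theorem~\ref{thm-2}/Theorem~\ref{th:mainhr} (for the ``if'' direction), using that finite topology makes each end annular. You have simply spelled out the bookkeeping that the paper leaves implicit.
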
 

We also obtain the following uniqueness result derived from
Theorems~\ref{schoen} and~\ref{th:mainhr}:

\begin{theorem}
A complete (and connected) minimal surface properly immersed in $\hr$
with two embedded ends $E_1$ and $E_2$ satisfying $\partial _\infty
E_1 \subset \partial_\infty (\alpha_1 \times \rr)$ and $\partial
_\infty E_2 \subset \partial_\infty (\alpha_2 \times \rr),$ for some
geodesics $\alpha_1$ and $\alpha_2$ in $\h$, must be a horizontal catenoid.
\end{theorem}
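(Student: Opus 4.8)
The plan is to realize each end as an annulus asymptotic to a degenerate embedded admissible polygon, apply Theorem~\ref{th:mainhr} to obtain finite total curvature and a killing-graph structure on each end, and then feed the resulting strong geometric picture into the Schoen-type Theorem~\ref{schoen}.

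First I would observe that the asymptotic boundary of a single vertical geodesic plane is itself an embedded admissible polygon: taking $k=1$ with $\alpha_1=\beta_1=\alpha_i$ in Definition~\ref{def:polygon}, the quadrilateral $\partial_\infty(\alpha_i\times\rr)=(\alpha_i\times\{\pm1\})\cup(\{a_1^i,a_2^i\}\times[-1,1])$, where $a_1^i,a_2^i$ are the endpoints of $\alpha_i$, is admissible and embedded. Since $\partial_\infty E_i\subset\partial_\infty(\alpha_i\times\rr)$ by hypothesis and this polygon is embedded, the containment of asymptotic boundaries is exactly the definition of $E_i$ being asymptotic to it, so no additional condition on sheets is needed in the embedded case. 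Regarding each $E_i$ as a properly immersed minimal annulus with one compact boundary component, asymptotic to $\partial_\infty(\alpha_i\times\rr)$, I would then apply Theorem~\ref{th:mainhr} to each end separately, obtaining that $E_i$ has finite total curvature and is simultaneously a killing and a geodesic asymptotic multigraph at infinity with sheets converging uniformly to zero.

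Summing the contributions of $E_1$, $E_2$ and the compact core shows that $\cal M$ has finite total curvature, and I would next argue it is nonzero. Since the ambient sectional curvatures of $\hr$ lie in $[-1,0]$ and $\det(II)=-\kappa_1^2\le 0$ for a minimal surface, the Gauss equation gives $K\le 0$ on $\cal M$; hence $\int_{\cal M}K=0$ would force $K\equiv0$, making $\cal M$ totally geodesic and vertical, i.e. a single plane $\alpha\times\rr$, which has only one end and contradicts the two-ended hypothesis. Thus the total curvature is finite and nonzero. It then remains to upgrade the multigraph statement to the geometric hypothesis of Theorem~\ref{schoen}: because $E_i$ is embedded and its asymptotic boundary lies in the single plane $\partial_\infty(\alpha_i\times\rr)$ (the degenerate polygon has the single pair $\alpha_i=\beta_i$), the killing multigraph cannot split into several distinct sheets over $\alpha_i\times\rr$, so outside a compact set $E_i$ is a single horizontal killing graph over a domain of $\alpha_i\times\rr$ converging uniformly to zero.

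This is precisely the embeddedness-plus-graph situation under which Theorem~\ref{schoen} is formulated, so applying it to $\cal M$, a complete connected minimal surface with nonzero finite total curvature whose two ends are each asymptotic to a vertical geodesic plane, yields that $\cal M$ is a horizontal catenoid. I expect the main obstacle to be the reduction from multigraph to a single graph, namely ruling out that the embedded end accumulates as several sheets along the same vertical plane and checking that the uniform convergence to zero furnished by Theorem~\ref{th:mainhr} is strong enough to initiate the moving-plane argument underlying Theorem~\ref{schoen}.
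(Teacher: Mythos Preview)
Your proposal is correct and follows exactly the route the paper indicates: the paper does not give a separate proof of this theorem but simply states that it is ``derived from Theorems~\ref{schoen} and~\ref{th:mainhr}'', and your argument is precisely that derivation, with the details (the degenerate embedded polygon $\partial_\infty(\alpha_i\times\rr)$, nonzero total curvature, and the passage from multigraph to single killing graph via embeddedness) filled in. The one step you flag as delicate, reducing the killing multigraph to a single sheet, is indeed the only point requiring care, and it is handled in \cite{hnst} as part of the asymptotic description of finite total curvature ends; once Theorem~\ref{th:mainhr} gives finite total curvature, the strong asymptotic hypothesis of Theorem~\ref{schoen} is automatic for an embedded end with $\partial_\infty E_i\subset\partial_\infty(\alpha_i\times\rr)$.
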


We can also prove using Theorem~\ref{th:mainhr} and Alexandrov's
moving planes method the following results:

\begin{theorem}\label{th:symmetry}
Let $M$ be a (connected) properly immersed minimal surface in $\hr$
with a finite number of embedded ends $E_1,\dots, E_k$ satisfying
$\partial _\infty E_i \subset \partial_\infty (\alpha_i \times \rr)$
for any $i=1,\dots,k$, where $\alpha_1,\dots, \alpha_k$ denote
complete geodesics in $\h$ cyclically ordered. Then $M$ is a vertical
bigraph symmetric with respect to a horizontal slice.
\end{theorem}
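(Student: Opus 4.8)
The plan is to deduce Theorem~\ref{th:symmetry} from Theorem~\ref{th:mainhr} together with Alexandrov's moving planes method. First, I would invoke Theorem~\ref{th:mainhr}: since $M$ is properly immersed with finite topology (it has finitely many ends $E_1, \dots, E_k$) and each end $E_i$ is asymptotic to the admissible polygon $\partial_\infty(\alpha_i \times \rr)$, each end is an asymptotic geodesic and killing multigraph at infinity and $M$ has finite total curvature. The key geometric payoff is that outside a large compact set, each end $E_i$ is a horizontal killing graph over $\alpha_i \times \rr$ converging uniformly to zero. This strong asymptotic control is precisely what is needed to begin the moving-planes argument, since it guarantees that the ends are ordered and approach their vertical geodesic planes tangentially, so no part of the surface escapes to infinity in an uncontrolled way as we reflect.

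Next, I would set up the reflection with respect to horizontal slices $P_s = \hh \times \{s\}$. The idea is to take $s$ very large and positive so that $P_s$ lies entirely above $M$ (possible since the ends converge uniformly to the slice $\hh \times \{0\}$ at infinity, and $M$ minus the ends is compact, hence contained in a slab $|t| \le C$), and then decrease $s$. Let $M_s^+$ denote the part of $M$ above $P_s$ and let $M_s^{+*}$ be its reflection across $P_s$. As $s$ decreases, $M_s^{+*}$ begins to appear below $P_s$; I would compare it with $M$ itself. The standard Alexandrov argument shows that $M_s^{+*}$ stays on one side of $M$ and they remain disjoint as long as the moving plane has not reached a position of first contact. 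The reflection across $P_{s}$ is an isometry of $\hr$ preserving minimality, so the maximum principle (both in the interior and, because of the uniform graphical behavior at infinity, at the ends via the boundary maximum principle or a half-space/Schoen-type comparison) applies.

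The core of the argument is to identify the critical slice $s_0$ at which reflection first produces a tangency, either at an interior point or asymptotically at infinity. The asymptotic multigraph property from Theorem~\ref{th:mainhr} is exactly what rules out the contact occurring ``at infinity'' in a degenerate way: because each $E_i$ is a killing graph converging uniformly to zero over $\alpha_i \times \rr$, the reflected ends and the original ends either coincide at infinity or separate cleanly, so a first point of contact, if it exists, forces $M$ to be invariant under reflection across $P_{s_0}$. At that critical slice the maximum principle yields that $M_{s_0}^{+*}$ and the portion of $M$ below $P_{s_0}$ agree, so $M$ is symmetric with respect to the horizontal slice $P_{s_0}$ and is a vertical bigraph over the domain in $P_{s_0}$ bounded appropriately; after a vertical translation sending $s_0$ to $0$ we obtain symmetry with respect to $\hh \times \{0\}$.

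The main obstacle is controlling the moving-planes procedure at the ends, where the surface is noncompact: one must verify that the reflected surface and the original surface do not have a spurious first contact at infinity and that the maximum principle can genuinely be initiated from the asymptotic data. This is where the uniform convergence to zero of the killing graphs (guaranteed by Theorem~\ref{th:mainhr}) is essential, and where a careful Schoen-type comparison at each end, using that the asymptotic boundaries $\partial_\infty(\alpha_i \times \rr)$ are cyclically ordered vertical geodesic planes, must replace the naive interior maximum principle. The cyclic ordering of the geodesics $\alpha_1, \dots, \alpha_k$ ensures the ends do not interfere with one another during the reflection, so that each end's graphical structure is preserved throughout the descent of the slice.
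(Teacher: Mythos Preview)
Your proposal contains a genuine gap that makes the moving-planes argument fail to start. You claim that for $s$ large the slice $P_s=\hh\times\{s\}$ lies entirely above $M$, because ``the ends converge uniformly to the slice $\hh\times\{0\}$ at infinity, and $M$ minus the ends is compact, hence contained in a slab $|t|\le C$.'' This misreads the asymptotic description coming from Theorem~\ref{th:mainhr}. The ends $E_i$ are horizontal killing graphs over domains of the \emph{vertical} planes $\alpha_i\times\rr$, and ``converging uniformly to zero'' means the graphing function over $\alpha_i\times\rr$ tends to $0$, i.e.\ the end approaches the vertical plane $\alpha_i\times\rr$, not a horizontal slice. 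Since $\partial_\infty E_i\subset\partial_\infty(\alpha_i\times\rr)$ contains the full vertical segments $\{a\}\times[-1,1]$, each end is unbounded in $t$ in both directions (think of the horizontal catenoid). Hence $M$ is never contained in a slab $\{|t|\le C\}$, and there is no $s$ with $P_s$ disjoint from $M$; your initialization step simply does not exist.

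The paper gets around this obstruction in a different way. Using Claims~\ref{cl:finite} and~\ref{cl:graph}, for large $t'$ the set $M\cap\{t>t'\}$ consists of finitely many embedded horizontal graphs $E$, one for each $\alpha_i$. For each such $E$ one reflects across $\{t=t'\}$ to obtain $E^*$, then \emph{horizontally translates} $E^*$ along a geodesic orthogonal to $\alpha_i$ into the half-space $\Delta\times\rr$ on the far side of $\alpha_i\times\rr$ (where it is disjoint from $M$ because $M\subset\Omega\times\rr$ by the maximum principle with vertical planes), and slides it back. Since $\partial_\infty E^*$ and $\partial E^*$ never meet $M$ along this motion, the maximum principle forces $E^*\cap M=\emptyset$ at the final position. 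Doing this for every sheet shows that $M_0^*$ (the reflection of $M\cap\{t\ge 0\}$, after normalizing $t'=0$) lies in the component of $(\hr)\setminus M$ containing $\Delta\times\rr$, which is exactly the disjointness needed to \emph{begin} Alexandrov. From there the reflection principle runs and produces the symmetry slice $\{t=s_0\}$. The cyclic ordering is used here (and only here) to ensure $M\subset\Omega\times\rr$ so that the region $\Delta\times\rr$ is free of $M$; it is not what prevents interference of ends during a vertical descent as you suggest.
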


\begin{theorem}\label{th:graphs}
Let $M$ be a properly embedded minimal surface in $\hr$ with finite
topology and one end asymptotic to an admissible polygon at infinity
${\cal P}$. Suppose that the vertical projection of ${\cal P}$ in
$\hh$ is the boundary of a convex domain $\Omega$. Then $M$ is a
vertical graph.

In particular, if $\alpha_i\times\{1\}$ and
$\beta_i\times\{-1\}$, with $i=1,\dots, k$, are the edges of
${\cal P}$ then:
\begin{enumerate}
\item $\sum_{i=1}^k |\alpha_i|= \sum_{i=1}^k |\beta_i|$; and
 \item for any inscribed polygonal domain $D$ in $\Omega$,
$\sum_{i=1}^k |\alpha_i\cap\partial D|= \sum_{i=1}^k
 |\beta_i\cap\partial D|$,
\end{enumerate}
 where $|\bullet|$ denotes the hyperbolic length of the curve
 $\bullet$.
\end{theorem}

\noindent
{\bf Open problems}.  Does it exist a one-end torus of finite total
curvature? Does it exist one whose associated polygon at infinity
${\cal P}$ does not project on some embedded ideal polygonal (see
Figure \ref{polygonal}, right)? What about higher genus? More
generally, can we study the moduli space of finite total curvature
minimal surfaces in function of the space of admissible polygons at
infinity?

\vskip 0.5cm Theorem \ref{thm-2} has a natural extension to
$\psl$. These simply-connected homogeneous manifolds can be viewed as
$\D\left(\sqrt{\frac{-4}{\kappa}}\right)\times \rr$, where
$\D\left(\sqrt{\frac{-4}{\kappa}}\right)= \{x^2+y^2 \leq -4/
\kappa\}$, endowed with the following metric:
\[
g= \lambda ^2 (dx^2+dy^2)+(\tau \lambda (ydx -xdy)+dz)^2,
\]
 where $\lambda = \frac{1}{1 + \frac{\kappa}{4} (x^2+y^2)}$. R. Younes
 \cite{younes} first studied the Jenkins-Serrin problem on compact
 domains of the basis and S. Melo \cite{melo} proved the existence of
 complete minimal graphs on ideal domains.
 
 The curvature of a minimal surface in $\psl$ satisfies $K \leq \tau$
 and does not have necessarily a negative sign. Hence it seems more
 difficult to prove theorems involving the Gaussian curvature. However in \cite{mn}
 Minh Nguyen proves that minimal surfaces with uniform bounded
 curvature which are geodesic asymptotic multigraphs have finite total
 curvature
 \[
 \int_\Sigma |K| dA \leq C.
 \]
 Using this property we can prove Theorem \ref{th:mainhr} in $\psl$:

\begin{theorem}\label{th:mainpsl}
  Let $M \subset \psl$ be a properly immersed minimal annulus with one
  compact boundary component $\partial M$ and asymptotic to an
  admissible polygon ${\cal P}$ at infinity. Then $M$ is an asymptotic
  Killing and geodesic multigraph and has finite total curvature.
\end{theorem}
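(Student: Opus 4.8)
The plan is to run the same two-stage scheme that establishes Theorem~\ref{th:mainhr} in $\hr$, replacing only the step where the non-positivity of the ambient curvature was exploited. The first stage is purely geometric and produces the asymptotic multigraph structure together with a uniform bound on the curvature that decays to zero at infinity; the second stage converts this into finite total curvature. Since $\psl$ is topologically $\D\times\rr$ equipped with the product compactification of its two factors, the notion of admissible polygon at infinity and of vertical, resp.\ horizontal, sheets transfers verbatim, and the ambient homogeneity (rotations about the fibers, vertical translations along the Killing field, and the $\mathrm{PSL}_2$ action on the base) supplies the isometries needed for the compactness and barrier arguments below.

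First I would show that each end is an asymptotic killing and geodesic multigraph. Properness of $X$ together with the hypothesis $\partial_\infty M\subset\mathcal P$ confines the end, outside a large compact set, to arbitrarily thin neighborhoods of the union $\bigcup_i(\alpha_i\times\rr)$ of vertical edges over the geodesics $\alpha_i$. On each such neighborhood I would compare $M$ with the $\psl$-analogues of the vertical planes --- invariant minimal surfaces asymptotic to a vertical edge, which here play the role that the totally geodesic planes $\alpha_i\times\rr$ play in $\hr$ --- and apply the maximum principle to these barriers. This forces the sheets to separate, their tangent planes to converge to the vertical, and each sheet to be expressible, outside a compact set, as a horizontal graph over a domain of $\alpha_i\times\rr$, both along the horocycle (killing) direction and along the orthogonal geodesic direction. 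Standard elliptic (Schauder) estimates for the minimal surface equation, applied to the graph functions that tend to zero at infinity, then upgrade this $C^0$ picture to $C^2$ and yield that the second fundamental form is uniformly bounded and converges uniformly to $0$ along the end.

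Once $M$ is known to be a geodesic asymptotic multigraph with uniformly bounded curvature, the second stage is immediate: the theorem of Hauswirth, Manzano and Pe\~nafiel~\cite{hmp} applies and gives $\int_\Sigma|K|\,dA\le C$, that is, finite total curvature. The main obstacle is the first stage, and specifically the construction and use of barriers in $\psl$: unlike in $\hr$, the surfaces $\alpha\times\rr$ over geodesics are not totally geodesic (indeed not minimal) when $\tau\neq 0$, so the clean comparison surfaces of the product case must be replaced by the invariant minimal surfaces of Younes~\cite{younes} and Melo~\cite{melo} asymptotic to the vertical edges, and one must verify that these remain admissible barriers uniformly along the whole end. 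A second, conceptually essential point is that the sign-indefiniteness of the intrinsic curvature (only $K\le\tau$ holds) prevents the Gauss--Bonnet argument of the $\hr$ case from bounding $\int|K|$ directly; it is exactly this gap that \cite{hmp} fills, which is why the finite-total-curvature conclusion must be reached through the geodesic-multigraph hypothesis rather than through a sign on the curvature.
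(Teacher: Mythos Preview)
Your two-stage outline is right, and your identification of \cite{hmp} as the bridge from ``geodesic multigraph with bounded curvature'' to ``finite total curvature'' matches exactly what the paper does in Claim~\ref{cl:ftc}. The genuine gap is in the first stage: you assert that comparing $M$ with barrier surfaces via the maximum principle ``forces the sheets to separate, their tangent planes to converge to the vertical, and each sheet to be expressible, outside a compact set, as a horizontal graph''. Barriers do the first of these---they confine a vertical sheet $E$ to an arbitrarily thin slab ${\cal R}_\epsilon$ about $\alpha\times\rr$ (this is the paper's Claim~\ref{cl:region}, using the $S_h$ and $S_\infty$ surfaces, which by Folha--Pe\~nafiel exist in $\psl$; your worry about $\alpha\times\rr$ not being minimal is therefore moot, since those are not the barriers actually used). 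But confinement to a thin slab does \emph{not} by itself rule out points where the normal becomes horizontal, i.e.\ where $E$ fails to be a horizontal graph; an immersed surface in a thin slab can fold. Nothing in your argument prevents this.

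The paper closes this gap with the Dragging Lemma (Lemma~\ref{DL}) applied to a compact stable minimal annulus $A_0$ with boundary outside ${\cal R}_\epsilon$ and to small auxiliary annuli $C_\ell$; in $\psl$ the required $A_0$ is produced by Lemma~\ref{pslannuli}. The core argument (Claim~\ref{cl:graph}) is: if $E$ had a point $q$ where the horizontal field $Y$ is tangent, one can place a translate of $A_0$ tangent to $E$ at $q$, obtain two local components $D_1,D_2$ of $E\cap A_0^-$ on opposite sides of the tangency, drag them with the $C_\ell$'s into prescribed compacts, and assemble a closed curve $\widetilde\Gamma\subset U$ bounding a disk whose intersection number with a suitable vertical plane is forced to be simultaneously odd and even---a contradiction. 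This topological-continuation step is the substance of the proof and is not replaceable by a bare maximum-principle/barrier argument. Once the graph structure is in hand, your Schauder route to curvature decay is fine (the paper instead invokes stability for killing graphs and the blow-up Lemma~\ref{bcurv} for geodesic graphs), and then \cite{hmp} finishes as you say.
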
 

This theorem implies that the complete graphs defined over ideal
polygonal domains of $\{ z=0\}$ constructed by Melo \cite{melo} have
finite total curvature. Collin, Nguyen and the first author have
constructed in \cite{MCH}, via variational methods, a horizontal
catenoid in $\psl$ asymptotic to two vertical geodesic planes. As a
consequence of Theorem \ref{th:mainpsl}, this annulus has finite total
curvature.

\begin{remark} 
It is not known if a complete finite total curvature annular end in
$\psl$ must be asymptotic to an admissible polygon at infinity.
\end{remark}

\section{Preliminaries}
\label{sec:prel}

There are several models for the 2-dimensional hyperbolic space
$\hh$. If we use the Poincar\'e disk model for the 2-dimensional
hyperbolic space, then the space $\hh$ is given by
\[
\mathbb H^2=\{(x,y)\in\rr^2;\ x^2+y^2<1\}
\]
with the hyperbolic metric $g_{-1}=\frac{4}{(1-x^2-y^2)^2}g_0,$ where
$g_0$ denotes the Euclidean metric in $\rr^2.$ In this model, the
asymptotic boundary $\partial_{\infty}\mathbb H^2$ of $\mathbb H^2$
can be identified with the unit circle. There are different models to
describe the asymptotic boundary of $\hr$. In this paper we consider
the product compactification obtained as the product of the
compactifications of each of the factors.

\medskip If we consider the half-plane model for $\hh$, then the space
$\hh$ is given by
$$\hh=\{(x,y)\in\rr^2;y>0\},$$
endowed with the metric $g_{-1}=\frac{1}{y^2}g_0$ 

To describe the homogeneous $3$-manifold $\psl$ we use the half-plane
model for $\hh,$ since we are interested in horizontal graphs. Hence,
in Euclidean coordinates, we have
\[\psl=\rr^2_+\times\rr=\{(x,y,t)\in\rr^3;y>0\},\]
endowed with the Riemannian metric
\[ds^2=\frac{1}{y^2}g_0+\left( d t-\frac{2\tau}{y}d x\right)^2.\]
The orthonormal frame $B=\{E_1,E_2,E_3\}$ in $\psl$ is given by 
\begin{align*}
E_1&=y\partial_x+2\tau \partial_t,&
E_2&=y\partial_y&
E_3&=\partial_t
\end{align*}
and satisfies $[E_1,E_3]=[E_2,E_3]=0$, $[E_1,E_2]=-E_1+2\tau E_3$, so the Levi-Civita connection is given by
\begin{equation}\label{eqn:killing-conexion}
\begin{array}{lll}
\overline\nabla_{E_1}E_1=E_2,&\overline\nabla_{E_1}E_2=-E_1+\tau E_3,&\overline\nabla_{E_1}E_3=-\tau E_2,\\
\overline\nabla_{E_2}E_1=-\tau E_3,&\overline\nabla_{E_2}E_2=0&\overline\nabla_{E_2}E_3=\tau E_1,\\
\overline\nabla_{E_3}E_1=-\tau E_2,&\overline\nabla_{E_3}E_2=\tau E_1,&\overline\nabla_{E_3}E_3=0.
\end{array}
\end{equation}

From now on, $N^3$ will denote $\hr$ or $\psl$. 

 We consider a vertical geodesic plane $\alpha \times \rr=\{ (x,y,t)
 \in \rr^2_+\times\rr; x=0\}$ and we define different notions of
 horizontal graphs over $\alpha\times\rr$:

\begin{definition}
\label{killinggraph}
A surface ${\cal M}\subset N^3$ is said to be a horizontal Killing
graph over $\alpha \times \rr$, if ${\cal M}$ is the graph of a
function $f: \Omega \subset \alpha \times \rr \to \rr$ along
horizontal horocycles, that is, ${\cal M}=X(\Omega),$ where
$$X(y,t)= (f(y,t), y, t).$$
\end{definition}

The parabolic isometry preserving the point at infinity $(0,0,0)$
induces a Killing field into the ambient space and a positive Jacobi
field on the graph ${\cal M}$. A well known result by Fischer-Colbrie
and Schoen \cite{Fischercolbrie} assures that the existence of a
positive Killing field on the surface ${\cal M}$ gives that ${\cal M}$
is stable, hence has bounded curvature away from its boundary by an uniform
estimate (R. Schoen \cite{S}).

\begin{definition}
\label{geodgraph}
A surface ${\cal M}\subset N^3$ is a horizontal geodesic graph over $\alpha
\times \rr$, if ${\cal M}$ is the graph of a function $f: \Omega
\subset \alpha \times \rr \to \rr$ along horizontal geodesics
orthogonal to $\alpha \times \rr,$ that is, ${\cal M}=X(\Omega),$
where
$$X(y,t)= (e^y \tanh (f), e^y \sech (f),t + 2 \tau \arcsin (\tanh (f))).$$
\end{definition}

The mean curvature operator associated to this notion of graphs has
been studied in \cite{mn}.  In this case, we cannot use stability to
assure that a geodesic graph has bounded curvature away from its
boundary but we can apply a blow up argument inspired by Rosenberg,
Souam and Toubiana \cite{RST}:

\begin{lemma} 
\label{bcurv}A horizontal geodesic graph ${\cal M}$ over $\alpha \times \rr$ has uniform
bounded curvature.
\end{lemma}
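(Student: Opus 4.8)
\textbf{Proof strategy for Lemma \ref{bcurv}.}

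The plan is to argue by contradiction via a blow-up (compactness) argument in the spirit of Rosenberg, Souam and Toubiana \cite{RST}. Suppose the curvature of the horizontal geodesic graph $\mathcal M$ is not uniformly bounded away from its boundary. Then there exists a diverging sequence of points $q_n \in \mathcal M$, staying a definite distance from $\partial \mathcal M$, at which $|A_n| := |A(q_n)|$ (the norm of the second fundamental form) tends to infinity. The idea is to rescale the ambient metric by the factor $|A_n|$ and center the picture at $q_n$, so that in the rescaled surface the second fundamental form has norm $1$ at the base point and is bounded by (say) $2$ on a large ball. The key geometric input making this work is that, under such a blow-up, the homogeneous ambient spaces $N^3 = \hr$ or $\psl$ converge to flat $\rr^3$: rescaling a homogeneous three-manifold by a diverging factor sends the sectional curvatures to zero, and the Levi-Civita connection coefficients in \eqref{eqn:killing-conexion} (which carry the $\tau$-terms) scale away. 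One must verify this convergence is smooth on compact sets, which follows from the explicit metric and the homogeneity of $N^3$.

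The central steps, in order, are as follows. First I would set up the rescaled immersions $X_n$ and invoke the minimality together with the curvature normalization $|A_n(q_n)| = 1$ to obtain, via standard elliptic estimates for the minimal surface equation, uniform higher-order bounds on compact sets; this yields a subsequence converging in $C^\infty_{\mathrm{loc}}$ to a complete minimal immersion $X_\infty$ into $\rr^3$. Second, because the base points satisfy $|A_\infty(q_\infty)| = 1$, the limit surface $\Sigma_\infty \subset \rr^3$ is \emph{nonflat}, so it is a genuinely nontrivial complete minimal surface. Third — and this is the crucial structural point — I must show that the limit $\Sigma_\infty$ inherits the property of being a horizontal geodesic graph. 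Under the blow-up, the foliation of $N^3$ by the horizontal geodesics orthogonal to $\alpha \times \rr$ (used in Definition \ref{geodgraph}) converges to a foliation of $\rr^3$ by parallel straight lines, and the graph condition $X(y,t) = (e^y\tanh f, e^y\sech f, t + 2\tau \arcsin(\tanh f))$ passes to the limit to say that $\Sigma_\infty$ is a graph along those parallel lines. A complete minimal graph over a domain of a plane in $\rr^3$ is, by the classical halfspace/Bernstein-type reasoning, forced to be a plane — contradicting nonflatness of $\Sigma_\infty$.

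The main obstacle I anticipate is the third step: controlling the limiting behavior of the geodesic-graph structure under rescaling and extracting the genuine graph property in the limit, rather than merely a limit minimal surface with no extra structure. One has to be careful that the base points $q_n$ do not run off the graph or degenerate the foliation, and that the reparametrization by $(y,t)$ behaves well after rescaling; concretely, this means tracking how the defining map $X$ and its graphing function $f$ transform, and showing the rescaled graphing functions converge so that the limit is still a graph (not, say, a vertical cylinder over a curve). Once the graph structure survives in the limit, the contradiction with completeness is the routine part. A subtlety worth flagging is that the bound must hold uniformly away from $\partial \mathcal M$ but with no assumption on the distance to infinity, so the blow-up points are allowed to diverge; this is precisely why the homogeneity of $N^3$ — giving scale-invariant control everywhere — is essential, and it is what allows the argument to conclude a single uniform constant.
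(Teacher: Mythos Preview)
Your approach is essentially the paper's: a blow-up in the style of \cite{RST}, the transversal foliation straightening to parallel lines in $\rr^3$, and Bernstein to force flatness of the limit. The one genuine omission is the mechanism by which the rescaled second fundamental form becomes ``bounded by (say) $2$ on a large ball'': this does not follow from $|A(q_n)|\to\infty$ alone. The paper makes the standard point-picking step explicit: on an extrinsic ball $C_n$ of fixed radius $\delta$ around the original bad point $p_n$, one maximizes $f_n(q)=d(q,\partial C_n)\,|A(q)|$ to obtain a new point $q_n$ with $|A(q_n)|\ge n$ and, crucially, $|A|\le 2|A(q_n)|$ on the half-radius subball. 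Only after this replacement does rescaling by $|A(q_n)|$ yield surfaces with uniformly bounded curvature on balls of diverging radius, which is what is needed to extract a smooth nonflat limit. You should insert this step; as written your ``bounded by $2$'' claim is unjustified.

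A cosmetic difference: the paper phrases the transversality via a foliation by horocylinders (vertical Euclidean planes in the half-space model) rather than the orthogonal horizontal geodesics you use. Either family blows up to a parallel-line foliation of $\rr^3$, so the conclusion that the limit is a graph is the same; your choice is arguably the more direct one given Definition~\ref{geodgraph}.
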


\begin{proof}
This blow up argument is standard.  Suppose ${\cal M}$ does not have
bounded curvature. Then there exists a divergent sequence $\{p_n\}$ in
${\cal M}$ such that $|A(p_n)|\geq n,$ where $A$ denotes the second
fundamental form of ${\cal M}$.  Denote by $C_n$ the connected
component of $p_n$ in an extrinsec ball $\bar B(p_n,\delta)\cap {\cal
  M}$, for some $\delta>0$. Consider the function
$f_n:C_n\rightarrow\rr$ given by
$$
f_n(q)=d(q,\partial C_n)|A(q)|,
$$ where $d$ is the extrinsic distance. The function $f_n$ restricted
to the boundary of $C_n$ is identically zero and
$f_n(p_n)=\delta|A(p_n)|>0.$ Then $f_n$ attains a maximum in a point
$q_n$ of the interior. Hence $\delta|A(q_n)|\geq d(q_n,\partial
C_n)|A(q_n)|=f_n(q_n)\geq f_n(p_n)=\delta|A(p_n)|\geq \delta n,$ what
yields $|A(q_n)|\geq n.$

Now consider $r_n=\frac{d(q_n,\partial C_n)}{2}$ and denote by $B_n$
the connected component of $q_n$ in $\bar B(q_n,r_n)\cap {\cal M}.$ We
have $B_n\subset C_n.$ If $q\in B_n,$ then $f_n(q)\leq f_n(q_n)$ and
$$
\begin{array}{rcl}
d(q_n, \partial C_n)&\leq & d(q_n,q) + d(q, \partial C_n)\\
&&\\
&\leq & \frac{d(q_n,\partial C_n)}{2} +d(q, \partial C_n)\\
&&\\
\Rightarrow d(q_n, \partial C_n) &\leq & 2d(q,\partial C_n).

\end{array}
$$ 
Hence we conclude that $|A(q)|\leq 2|A(q_n)|.$

Call $g$ the metric on $N^3$ in the half space model. The graph ${\cal
  M}$ is transverse to a foliation of horocylinders (vertical planes
in the euclidean model).  Consider ${\cal M}_n$ the homothety of $B_n$
by $\lambda_n=|A(q_n)|$. We obtain at the limit a complete minimal
surface $\widetilde{\cal M}$ in $\rr^3$ which is transversal to the
limit of the foliation of circle dilated by the homothety. This
foliation is converging to parallel line in $\rr^3$, hence
$\widetilde{\cal M}$ is a complete graph of $\rr^3$, which is flat by
Bernstein theorem, a contradiction.

\end{proof}

We now introduce some techniques we will use in this paper. Roughly
speaking we will use the hypothesis on the asymptotic boundary of the
ends to construct barriers which will constrain the geometry of the
ends locally in some union of vertical slabs (i.e. the region bounded
by the equidistant planes at a fixed distance from a vertical geodesic
plane). After that we will study the geometry of subdomains of ends
which are contained in some vertical slab with small width. We will
use what we call Dragging Lemma, a technique developped by Collin,
Hauswirth and Rosenberg in \cite{CollinHR2,CollinHR}, to prove that
the end is a horizontal multigraph, hence has uniform bounded
curvature at infinity using stability or Lemma \ref{bcurv}.  Then we
will prove that the property on the boundary at infinity implies that
the ends have finite total curvature.

\subsection{Family of barriers: ${\cal C} (h)$ and $S_h$}
\label{subsub:Sh}

Given $h\in(0,\pi)$, there is a one parameter family of rotationally
invariant surfaces ${\mathcal C}(h)$ called vertical catenoids in $\hh
Ê\times \rr$, where $h$ is the total height of the examples. The
boundary of ${\mathcal C}(h)$ at infinity consists of two horizontal
circles at height $t =h/2$ and at $t=-h/2$. We denote by $r_h$ the
size of the neck (the length of the curve ${\cal C} (h)\cap \{t=0\}$)
of these examples. When $h \to \pi$, then~$r_h \to \infty$ and the
surface disappears at infinity.  When $h \to 0$, $r_h\to 0$ and the
catenoids converge to the horizontal section $\hh \times \{ 0 \}$ with
degree two.

In $\psl$, Pe\~nafiel \cite{pe} has studied rotationally invariant
families of minimal surfaces. There are vertical catenoids whose
boundary at infinity consists of two horizontal circles at heights
$\pm h$, for any $h\in(0, \pi\sqrt{1+4\tau^2})$, with $r_h \to \infty$
as $h \to \pi \sqrt{1+4\tau^2}$.

Applying a maximum principle with these families of rotationally
invariant surfaces we have the following non existence property:

\begin{lemma}
\label{rotcatenoid}
There is no minimal surface ${\cal M}$ in $N^3$ with compact
instersection with 
$$S= \{ -\pi \sqrt{1+4\tau^2}/ 2\leq t \leq \pi
\sqrt{1+4\tau^2}/ 2\}$$ and boundary $\partial {\cal M} \cap S
=\emptyset$.
\end{lemma}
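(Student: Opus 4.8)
The plan is to argue by contradiction, using the rotationally invariant catenoids ${\cal C}(h)$ as barriers together with the geometric maximum principle. The feature that makes the width $\pi\sqrt{1+4\tau^2}$ of $S$ critical is that the neck radius satisfies $r_h\to\infty$ precisely as the total height of ${\cal C}(h)$ tends to the width of $S$; thus for $h$ below but close to this critical value, ${\cal C}(h)$ is a connected, noncompact, properly embedded minimal surface contained in the open slab $\mathrm{int}\,S$, with arbitrarily large neck. Suppose ${\cal M}$ were a minimal surface with $K:={\cal M}\cap S$ compact and nonempty and $\partial{\cal M}\cap S=\emptyset$; I would produce a forbidden interior tangency between ${\cal M}$ and one of these catenoids.

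First I would set up the barrier. Pick any point $q\in K$ and, using the ambient isometries of $N^3$, place the rotation axis of the catenoids through the fiber over the projection of $q$. Since $K$ is compact it has bounded horizontal extent, say contained in the horizontal ball of radius $R$ about this axis. Because $r_h\to\infty$ as $h$ approaches the critical height, I may fix $h$ close enough to critical that $r_h>R$. For such $h$ the surface ${\cal C}(h)$ is disjoint from $K$, and $K$ lies entirely in the closed component of $N^3\setminus{\cal C}(h)$ containing the axis: points of $K$ at heights $|t|<h/2$ are inside because their distance to the axis is at most $R<r_h\le\rho_h(t)$, where $\rho_h(t)$ denotes the radius of ${\cal C}(h)$ at height $t$ (so $\rho_h(0)=r_h$), while points at heights $|t|\ge h/2$ lie outside the height range of ${\cal C}(h)$ and hence, like the axis, in the same inside component.

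Next I would decrease $h$ continuously from this initial value and track the first contact $h^\ast=\sup\{h:{\cal C}(h)\cap K\neq\emptyset\}$. Such an $h^\ast\in(0,\text{critical})$ exists: as $h\to0$ the inside region collapses onto the axis, so the genuine surface $K$ cannot remain inside it for all $h$. At $h=h^\ast$ the surfaces ${\cal M}$ and ${\cal C}(h^\ast)$ meet at a point $p$ with $|t(p)|<h^\ast/2<\pi\sqrt{1+4\tau^2}/2$, so $p$ is an interior point of ${\cal M}$ (recall $\partial{\cal M}\cap S=\emptyset$); moreover $K$ lies on one side of ${\cal C}(h^\ast)$ near $p$ and is tangent to it there. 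Both surfaces being minimal, the interior maximum principle forces ${\cal M}$ to coincide with ${\cal C}(h^\ast)$ on the connected component of $p$, hence with all of ${\cal C}(h^\ast)$. But ${\cal C}(h^\ast)$ is noncompact and contained in $\mathrm{int}\,S$, so ${\cal M}\cap S$ would be noncompact, contradicting the compactness of $K$.

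I expect the main obstacle to be justifying the first-contact step rigorously: one must check that a first tangency genuinely occurs at a finite interior point (rather than the surfaces drifting apart, meeting only at infinity, or touching where ${\cal M}$ crosses $\partial S$) and that ${\cal M}$ sits on one side there so that the maximum principle applies. All of this rests on the compactness of $K$ together with the divergence $r_h\to\infty$ at the critical height, which is exactly why the width of $S$ cannot be enlarged. The argument is uniform in $N^3$, using the product catenoids in $\mathbb{H}^2\times\mathbb{R}$ and Pe\~nafiel's catenoids in $\psl$.
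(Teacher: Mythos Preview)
The paper does not spell out a proof; it only says ``Applying a maximum principle with these families of rotationally invariant surfaces we have the following non existence property.'' Your approach---catenoid barriers plus the maximum principle---is precisely what is intended.

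There is, however, a real gap in your first-contact step. Your assertion that ``as $h\to0$ the inside region collapses onto the axis'' is false: the inside component of $N^3\setminus{\cal C}(h)$ contains \emph{all} of $\{|t|\geq h/2\}$, since ${\cal C}(h)$ lies entirely in the open slab $\{|t|<h/2\}$. Thus as $h\to 0$ the inside region \emph{expands} to fill everything except the slice $\{t=0\}$ away from the axis; if $K$ happened to miss $\{t=0\}$ it would sit inside ${\cal C}(h)$ for every $h$ and your family would never touch it. (Note also that the catenoids ${\cal C}(h)$ for distinct $h$ are not nested---they intersect one another---so the one-sidedness at $h^\ast$ needs its own justification.) The usual remedy is to translate rather than deform: fix a single $h$ close to the critical value with $r_h>R$, place ${\cal C}(h)$ so that $K$ lies inside, and then slide ${\cal C}(h)$ away by horizontal isometries. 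A point $p_0\in K$ with $|t(p_0)|<h/2$ exists (otherwise $K\subset\partial S$, and the horizontal-slice maximum principle forces ${\cal M}$ into a single slice, contradicting compactness of $K$); once the axis has moved farther than $\rho_h(t(p_0))$ from $p_0$, that point lies outside the translated catenoid while $\partial{\cal M}$, being at heights $|t|>h/2$, remains inside. Connectedness of ${\cal M}$ then forces a first interior tangency along the motion, and your maximum-principle contradiction goes through unchanged. Alternatively, you can rescue your variation-in-$h$ scheme by first proving---again via horizontal slices and the connectedness of ${\cal M}$---that $K$ meets $\{t=0\}$ along a curve; those points do leave the inside region as $r_h\to 0$.
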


Given $h>\pi$ and a geodesic $\alpha\subset\hh$ with endpoints
$a_1,a_2\in\partial_\infty\hh$, we will consider the minimal surface
$S_h\subset\hh\times(0,h)$, first introduced by Hauswirth \cite{h}
then by Sa Earp and Toubiana \cite{ST} and Daniel \cite{d} (see also
Mazet, Rodr\'iguez and Rosenberg \cite{mrr,MazetRodriguezRosenberg}).
This minimal surface is a vertical bigraph with respect to
$\hh\times\{h/2\},$ is invariant by horizontal translations along
$\alpha$ and its asymptotic boundary is $(\eta\times\{0,h\})\cup
(\{a_1,a_2\}\times[0,h])$, where $\eta$ is an arc in
$\partial_\infty\hh$ with endpoints $a_1,a_2$ (see Figure
\ref{figure1} (left)). We remark that for each $t\in(0,h)$, $S_h\cap
(\hh\times\{t\})$ is an equidistant curve to
$\alpha\times\{t\}$. Moreover, when $h$ goes to $+\infty$, the
distance between $S_h\cap (\hh\times\{h/2\})$ and
$\alpha\times\{h/2\}$ goes to zero; in fact, the graph
$S_h\cap\{0<t<h/2\}$ converges to the minimal graph $S_\infty$ defined
over the domain bounded by $\alpha\cup\eta$ with boundary values
$+\infty$ over $\alpha$ and 0 over $\eta$ (see Figure \ref{figure1}
(right)).

\begin{figure}[h]
  \centering
  \includegraphics[height=3cm]{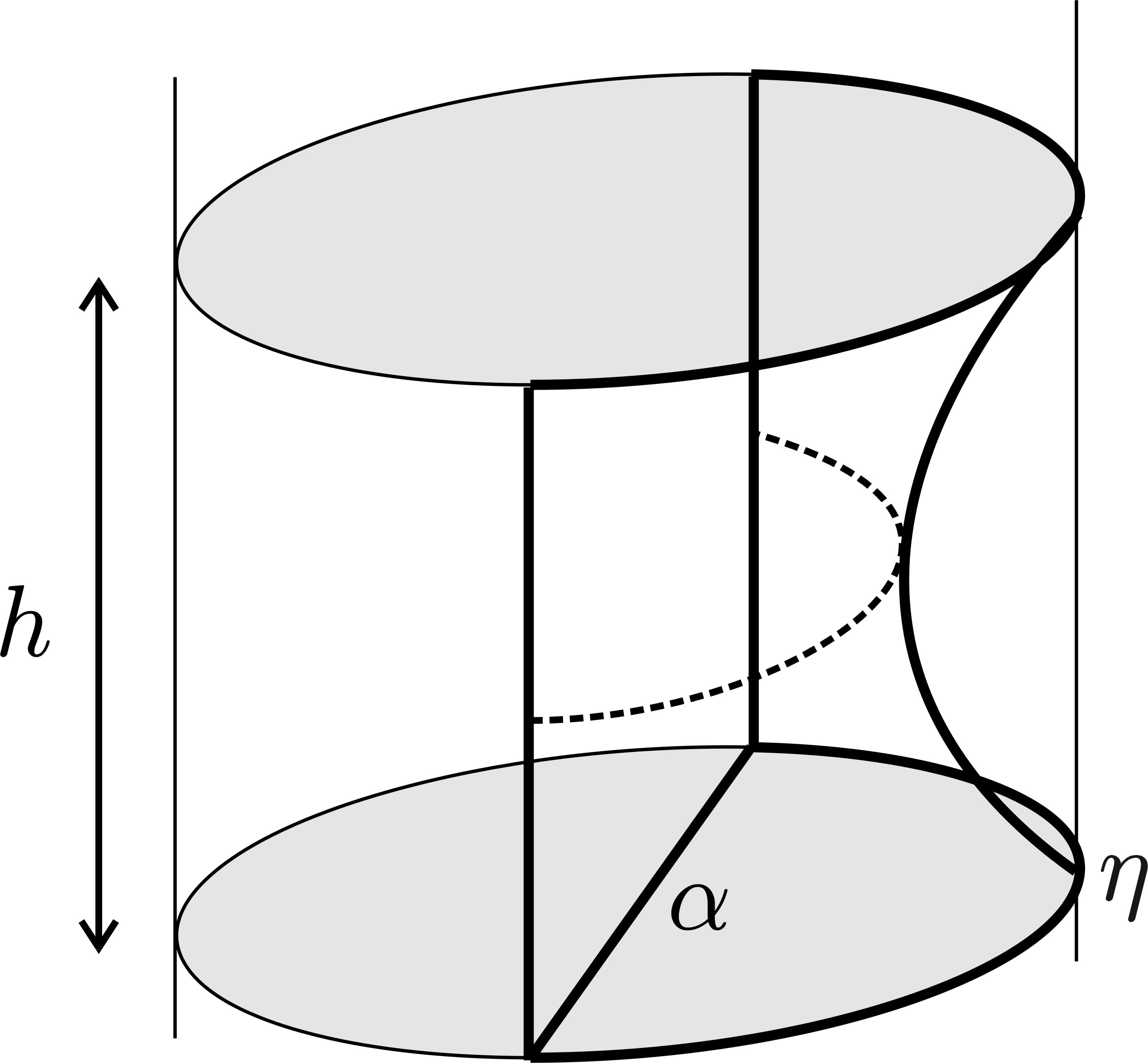}
  \hspace{2cm}
   \includegraphics[height=4cm]{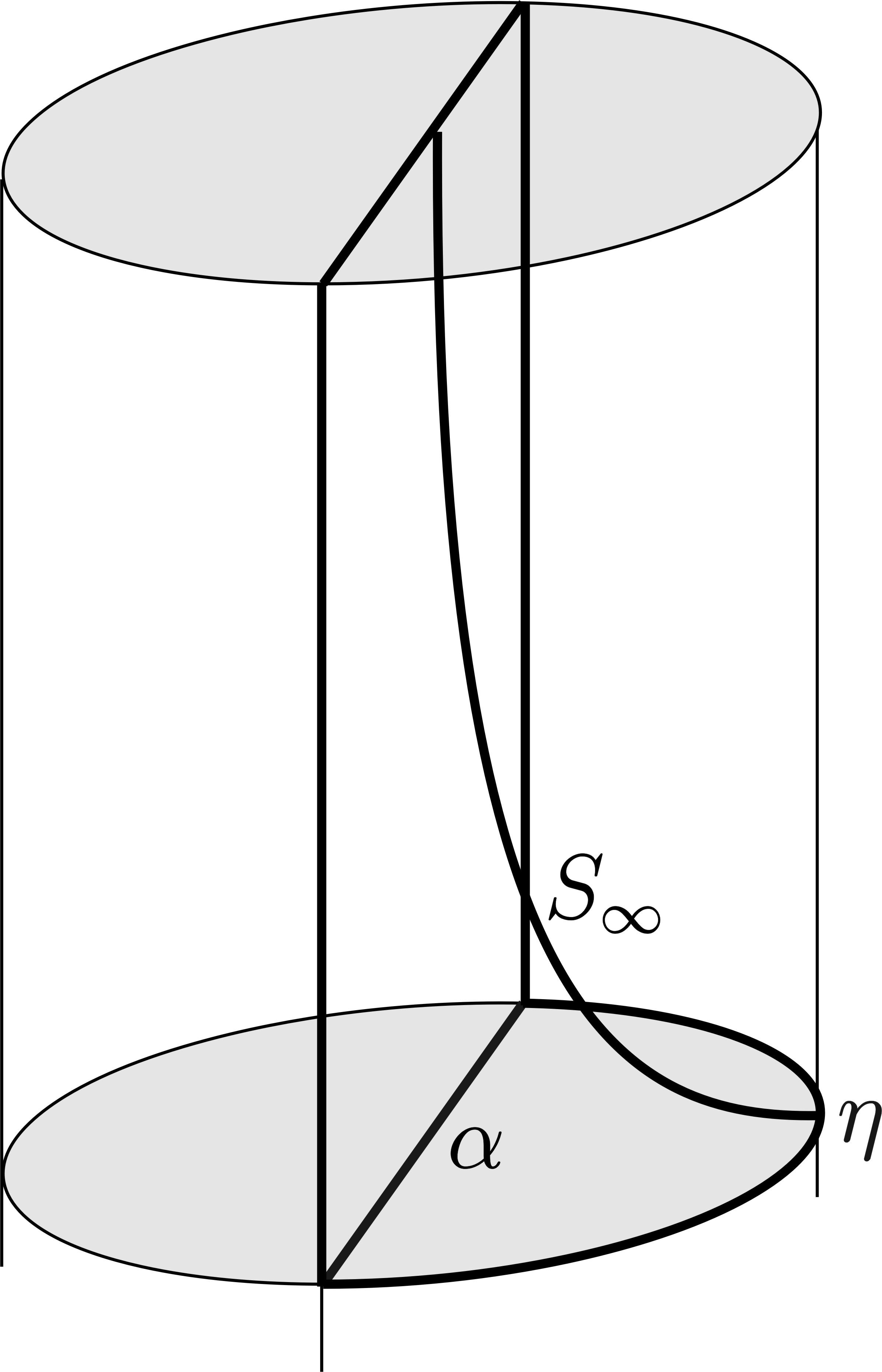} 
  \caption{Left: Minimal surface $S_h$; Right: Minimal surface $S_\infty$}
    \label{figure1}
\end{figure}

In $\psl$, Folha and Pe\~nafiel~\cite{FP} have obtained similar minimal
disks~$S_h$, for any $h>\pi\sqrt{1+4\tau^2}$, and $S_\infty$.

\subsection{The Dragging Lemma}

This refer as a technique of topological continuation of an arc into a
minimal surface immersed in an arbitrary three manifold $N^3$ (in our
context, $N^3$ is $\hh \times \rr$ or $\psl$).  We deal with a
geometrical situation where the immersed minimal surface ${\cal
  M}=X(\Sigma)$ is simply connected and is contained in a slab $S$
with small width.  In this situation, we consider a compact annulus
$A_0$ with boundary outside the slab, intersecting ${\cal M}$ in its
interior with $A_0 \cap \partial {\cal M} = \emptyset$. The two
components of the boundary of $A_0$ are contained in different
connected components of $N^3 \setminus S$.

Now we move the annulus $A_0$ by using an ambient isometry and we
consider $t \to A(t)$ the position of the translated annulus moved in
a ${\mathcal C}^1$-way such that $A(t) \cap \partial {\cal M} =
\emptyset $ and $\partial A(t) \cap {\cal M}= \emptyset$. The maximum
principle says that $A(t) \cap {\cal M}$ cannot be empty, otherwise
there would be a last point of contact between these two surfaces, a
contradiction. Then if $p$ is a point in the intersection at $t=0$, we
can construct a path $\alpha (t)$ with endpoint $p$ such that $\alpha
(t) \in A(t) \cap {\cal M}$. This path can be continued while $A(t)$
does not meet the boundary of ${\cal M}$. This path can be constructed
in a ${\mathcal C}^1$-way and monotonically. This is a consequence of
the Dragging Lemma:

\begin{lemma}[Dragging Lemma \cite{CollinHR2,CollinHR}] Let $g:\Sigma
  \to N^3$ be a properly immersed minimal surface in a complete
  $3$-manifold $N^3$. Let $A$ be a compact surface (perhaps with
  boundary) and $f: A \times [0,1] \to N^3$ be a ${\mathcal C}^1$-map
  such that $f(A\times\{ t \})=A(t)$ is a minimal immersion for $0
  \leq t \leq 1$. If $\partial(A(t)) \cap g(\Sigma) = \emptyset$ for
  $0\leq t \leq 1$ and $A(0) \cap g(\Sigma) \neq \emptyset $, then
  there is a ${\mathcal C}^1$ path $\alpha (t)$ in $ \Sigma$ such
  that $(g \circ \alpha)(t) \in A(t) \cap g( \Sigma) $ for $0 \leq t
  \leq 1$. Moreover, we can prescribe any initial value $(g \circ \alpha)
  (0) \in A(0) \cap g(\Sigma)$.
\label{DL}
\end{lemma}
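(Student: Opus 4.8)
The plan is to organize the argument around the \emph{incidence set} of the two families of surfaces and to show that it projects onto the whole parameter interval, carrying a section through the prescribed initial point. Concretely, set
\[
W=\{(p,t)\in\Sigma\times[0,1]\ :\ g(p)\in A(t)\},\qquad \pi\colon W\to[0,1],\ (p,t)\mapsto t,
\]
and let $W_0$ be the connected component of $W$ containing the prescribed point $(\alpha(0),0)$, which lies in $W$ by hypothesis since $A(0)\cap g(\Sigma)\neq\emptyset$. The whole lemma reduces to two assertions: that $\pi(W_0)=[0,1]$, and that $W_0$ admits a $\mathcal C^1$ section over $[0,1]$. Since $[0,1]$ is connected and $0\in\pi(W_0)$, the first assertion follows once I show $\pi(W_0)$ is both closed and open in $[0,1]$.

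Closedness I would obtain from properness. Because $A$ is compact and $f$ is $\mathcal C^1$, the union $\bigcup_{t\in[0,1]}A(t)$ is contained in a compact set $K\subset N^3$; as $g$ is proper, $g^{-1}(K)$ is compact in $\Sigma$. Hence for $t_n\to t_*$ with $(p_n,t_n)\in W_0$, the points $p_n$ subconverge to some $p_*$ with $g(p_*)\in A(t_*)$, and the standing hypothesis $\partial A(t)\cap g(\Sigma)=\emptyset$ guarantees that $g(p_*)$ is an interior intersection point rather than one lost through $\partial A$. Thus $(p_*,t_*)\in \overline{W_0}=W_0$, so $t_*\in\pi(W_0)$.

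Openness is the heart of the matter and is where the maximum principle for minimal surfaces enters. Fix $(p_0,t_0)\in W_0$ and write $q_0=g(p_0)\in A(t_0)$. If $A(t_0)$ and $g(\Sigma)$ meet transversally at $q_0$, the implicit function theorem produces, for $t$ near $t_0$, a $\mathcal C^1$ family of intersection points near $p_0$, giving a local $\mathcal C^1$ section and openness. If instead the two minimal surfaces are tangent at $q_0$, I express both locally as graphs $u_t$ (for $A(t)$) and $v$ (for $g(\Sigma)$) over their common tangent plane; both solve the minimal surface equation, so the difference $w_t=u_t-v$ satisfies a homogeneous linear elliptic equation obtained by linearizing between the two graphs, with $w_{t_0}(q_0)=0$ and $\nabla w_{t_0}(q_0)=0$. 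The strong maximum principle and Hopf lemma forbid the two minimal surfaces from being locally one-sided (this is exactly the ``no last point of contact'' mechanism invoked in the text), and the Bers–Hartman–Wintner description of the nodal set of $w_{t_0}$ shows that the intersection near $q_0$ is a configuration of $2k$ arcs ($k\geq 2$) crossing at $q_0$. Since $w_t$ depends continuously on $t$, these nodal arcs persist and vary continuously, so the intersection remains nonempty near $p_0$ for nearby $t$, and the path can be continued through the crossing. Either way $\pi(W_0)$ is open, so $\pi(W_0)=[0,1]$.

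Finally I would assemble the section. Away from the set of tangency times the implicit function theorem yields a $\mathcal C^1$ section, and the nodal analysis lets me match these local sections continuously across each tangency time; a reparametrization then produces a $\mathcal C^1$ path $\alpha$ with $(g\circ\alpha)(t)\in A(t)\cap g(\Sigma)$ for all $t$ and with the prescribed value $(g\circ\alpha)(0)$. The main obstacle I anticipate is precisely the tangential contact: one must control the local structure of the nodal set of the linearized operator and arrange $\mathcal C^1$ regularity of the continuation across crossing points, rather than merely continuity. The properness of $g$ and compactness of $A$ keep everything in a fixed compact region and make the limiting and continuation arguments uniform, while the boundary hypothesis $\partial A(t)\cap g(\Sigma)=\emptyset$ is what prevents intersection points from escaping and is used repeatedly to keep all contacts interior.
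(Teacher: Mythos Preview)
The paper does not give its own proof of this lemma: it is stated with the citation \cite{CollinHR2, CollinHR} and used as a black box, with only a heuristic paragraph beforehand explaining why the maximum principle forces $A(t)\cap g(\Sigma)$ to stay nonempty. There is therefore no proof in the paper to compare yours against; what you have written is an independent attempt to supply the argument that the paper outsources.

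Your outline via the incidence set $W$ and the open--closed argument for $\pi(W_0)$ is the standard and correct skeleton, and your use of properness for closedness and the maximum principle/Hartman--Wintner nodal description for openness is on target. The one point where your sketch remains genuinely incomplete is exactly the one you flag: producing a $\mathcal C^1$ section through a tangency time. At such a time the nodal set of $w_{t_0}$ is a union of $2k$ arcs meeting equiangularly at $q_0$, and while these arcs do persist for nearby $t$, a naive choice of branch gives only a continuous (piecewise $\mathcal C^1$) path through the crossing; upgrading this to a globally $\mathcal C^1$ path requires either a careful selection argument among the branches or a reparametrization in $t$ that absorbs the corner. Your final paragraph gestures at ``a reparametrization then produces a $\mathcal C^1$ path'' but does not say how, and this is where the actual work in \cite{CollinHR2, CollinHR} lies. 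If you want a self-contained proof rather than a citation, that step needs to be made explicit.
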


\subsection{The minimal annulus $A_0$ and an application of the Dragging Lemma}
\label{sec-annulus}
We consider a slab $\mathcal{R}_d$ in $\hh \times \rr$ bounded by some
equidistant planes $P^d$ and $P^ {-d}$ to some vertical plane). We will study the geometry of subdomains of ends which are
contained in some vertical slab with small width. We will use what we
call Dragging Lemma, a technique developped by Collin, Hauswirth and
Rosenberg in \cite{CollinHR2,CollinHR}, to prove that the end is a
horizontal multigraph, hence has uniform bounded curvature at infinity
using stability or Lemma \ref{bcurv}.  Then we will prove that the
property on the boundary at infinity implies that the ends have finite
total curvature.

\begin{lemma} Given $d>0$ small enough, there exists a compact stable
  minimal annulus $A_0$ in $\hhÊ\times \rr$ bounded by two large
  enough circles (in exponential coordinates) $\eta_+\subset P^d$ and
  $\eta_{-}\subset P^{-d}$. This annulus $A_0$ is symmetric with
  respect to the vertical geodesic plane $\alpha_{0}\times\rr$ and the
  unit normal vector to $A_0$ along the intersection curve $A_0\cap
  (\alpha_{0}\times\rr)$, which is convex, takes all directions in the
  plane $\alpha_{0}\times\rr$.
\end{lemma}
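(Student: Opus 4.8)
The plan is to construct $A_0$ explicitly as a surface of revolution–type barrier adapted to the slab $\mathcal{R}_d$, exploiting the symmetry by the reflection through $\alpha_0\times\rr$. The key geometric requirement is that the unit normal along the convex intersection curve $A_0\cap(\alpha_0\times\rr)$ sweeps \emph{all} directions in the plane $\alpha_0\times\rr$; this forces the profile of $A_0$ to close up smoothly and bound a genuine annulus whose ``equator'' is a convex closed curve in $\alpha_0\times\rr$. I would look for $A_0$ within the family of minimal surfaces invariant under the one–parameter group of hyperbolic translations along the geodesic $\alpha_0$, or alternatively build it as a compact piece of one of the barrier surfaces $S_h$ already introduced in Subsection~\ref{subsub:Sh}, suitably capped. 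Since $S_h$ is a vertical bigraph symmetric with respect to $\hh\times\{h/2\}$ and its slices are equidistant curves to $\alpha\times\{t\}$, a reflected and truncated portion of $S_h$ is a natural candidate for producing a convex equator in a vertical plane.

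\medskip

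First I would fix the vertical geodesic plane $\alpha_0\times\rr$ and the two equidistant planes $P^{\pm d}$ at distance $d$ from it, so that the slab $\mathcal{R}_d$ has small width $2d$. Second, I would produce, for each sufficiently small $d>0$, a minimal surface symmetric under reflection in $\alpha_0\times\rr$ by solving a suitable Plateau-type or Jenkins--Serrin boundary value problem: prescribe the two boundary circles $\eta_{\pm}\subset P^{\pm d}$ (large in exponential coordinates, so that they capture a long strip of each equidistant plane) and invoke existence and symmetry of the minimizer. Third, I would verify stability: because the surface can be realized as an area–minimizer (or as a Killing graph admitting a positive Jacobi field arising from the parabolic translation, exactly as in the discussion following Definition~\ref{killinggraph}), it is automatically stable with uniformly bounded curvature away from its boundary. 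Fourth, I would check that the intersection curve $A_0\cap(\alpha_0\times\rr)$ is convex and that, by continuity of the Gauss map along this curve together with the reflection symmetry, the unit normal realizes every direction tangent to the plane $\alpha_0\times\rr$.

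\medskip

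The main obstacle will be the last point: guaranteeing that the normal along the equator takes \emph{all} directions in $\alpha_0\times\rr$, rather than staying confined to a subarc of directions. This is a global, not merely local, condition on the geometry of $A_0$, and it is precisely what will later allow the Dragging Lemma (Lemma~\ref{DL}) to be initiated from every angular direction and thereby drag the end of $M$ into the slab from all sides. To control the swept directions I would track how the boundary circles $\eta_{\pm}$ grow as $d\to 0$ and $d$ is held small: as $\eta_{\pm}$ become large in exponential coordinates the annulus $A_0$ is forced to bend across the full slab, and by an intermediate-value argument applied to the angle of the normal along the convex equator (starting from one extreme position dictated by the symmetry and running to its reflection), the normal must attain every intermediate direction. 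Making this intermediate-value / continuity argument rigorous — in particular ruling out degeneration of $A_0$ to a vertical plane or to the slice $\hh\times\{0\}$, which would cause the normal to miss a range of directions — is the technical heart of the construction and is where the smallness of $d$ and the largeness of $\eta_{\pm}$ must be balanced carefully, using the barrier surfaces ${\mathcal C}(h)$ and $S_h$ to confine $A_0$ and prevent such degeneration.
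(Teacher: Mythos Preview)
The paper does \emph{not} supply a proof of this lemma: after stating it, the text immediately proceeds to further notation (``Let us denote by $\xi$ the geodesic\dots''). So you are attempting to fill a gap the authors left implicit. The intended construction is strongly suggested by the parallel Lemma~\ref{pslannuli} in $\psl$, which \emph{is} proved: there the authors take a compact stable subdomain of a horizontal catenoid from~\cite{MCH} with asymptotic planes close together. In $\hr$ the corresponding objects are the horizontal catenoids of Pyo~\cite{p} and Morabito--Rodr\'iguez~\cite{MoR}, already cited in the introduction before Theorem~\ref{schoen}. A compact piece of such a catenoid, cut off by circles in $P^{\pm d}$, immediately gives a stable minimal annulus symmetric with respect to $\alpha_0\times\rr$; its neck curve in $\alpha_0\times\rr$ is convex, and because the two halves of the complete catenoid diverge to parallel vertical planes, the conormal along the neck rotates through a full turn, forcing the unit normal to realize all directions in $\alpha_0\times\rr$. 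This is the one-line argument the authors presumably had in mind.

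Your proposal misses this and substitutes a considerably harder route. The surfaces $S_h$ are minimal \emph{disks}, not annuli, and are invariant under translation along $\alpha$; a ``reflected and truncated'' piece of $S_h$ does not naturally yield a compact annulus bounded by circles in $P^{\pm d}$, nor does it have a convex closed equator in $\alpha_0\times\rr$. The Plateau approach --- prescribing two large circles in $P^{\pm d}$ and taking a minimizer --- can produce a symmetric stable annulus, but you then have no direct handle on the geometry of the equator: convexity and the full sweep of the normal are not consequences of minimization or of an intermediate-value argument alone (the equator could a priori be long and flat with the normal nearly constant except near two turning points). Your worry about degeneration is well placed, but the barriers ${\cal C}(h)$ and $S_h$ you invoke control height and horizontal position, not the angular behavior of the normal along a fixed curve. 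The cleanest fix is to abandon the variational construction and simply cite the explicit horizontal catenoids, exactly as the $\psl$ proof does.
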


 Let us denote by $\xi$ the geodesic that joins the centers of
 $\eta_+$ and $\eta_{-},$ and by $q_1$ the point $\xi\cap
 (\alpha_0\times\rr).$ Assume the circles $\eta_+$ and $\eta_{-}$ are
 sufficiently close so that $\xi\cap A_0=\emptyset.$

\begin{figure}[h]
    \centering
    \includegraphics[height=9cm]{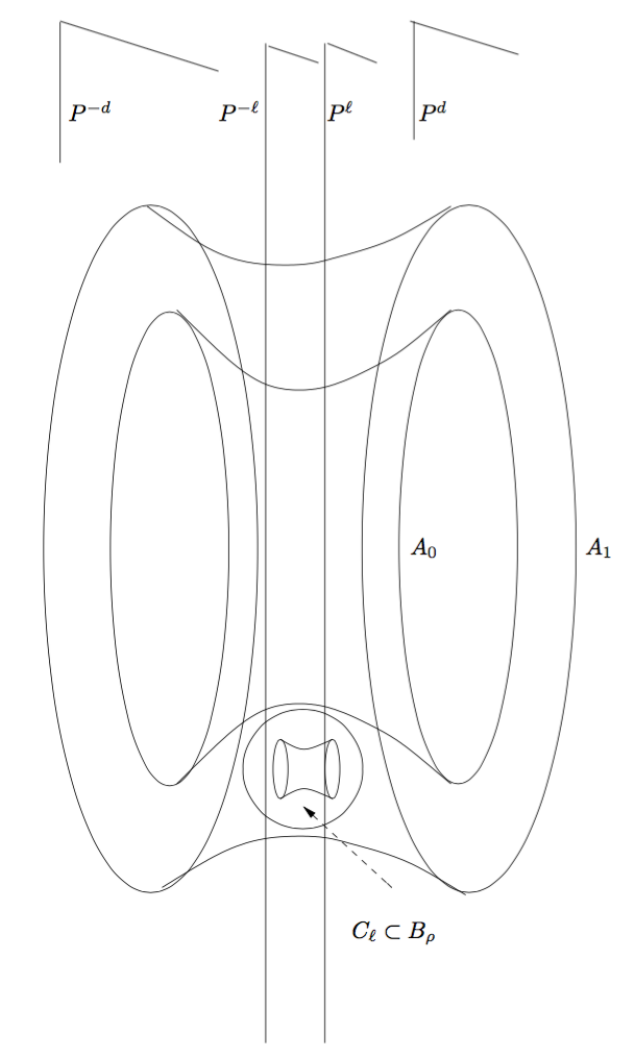}
    \caption{Annuli $A_0$ and $C_\ell$}
    \label{fig:annuli}
  \end{figure}

By stability, there exist $\delta>0$ and a foliation of a (closed)
neighborhood of $A_0$ in the slab $\mathcal {R}_d$ bounded by the
vertical planes $P^d \cup P^{-d}$ given by compact
annuli $A_s$, for $-\delta\leq s\leq \delta$, each $A_s$ with boundary
$\eta_{s}\cup\eta_{-s}$, where $\eta_s$ and $\eta_{-s}$ are
equidistant curves at distance $s$ from $\eta_+$ and $\eta_{-},$
respectively.  $A_0$ separates the slab $\mathcal {R}_d$ in two
connected components, one interior compact region $A_0^-$ and the
other one $A_0 ^+,$ the outside region of $ \mathcal {R}_d\setminus
A_0$ which is non compact.  Let us assume that $A_s\subset A_0^+$ if
$s>0$.  Let us write
\[
{\rm Tub}^+(A_0)=\cup_{s\in [0,\delta]}A_s\quad\mbox{and}\quad
{\rm Tub}^-(A_0)=\cup_{s\in [-\delta,0]}A_s.
\]
There exists a small constant $\rho>0$ such that, for
any point $q\in A_{\delta/2}\cap(\alpha_{0}\times\rr)$
(resp. $A_{-\delta/2}\cap(\alpha_{0}\times\rr)$), the geodesic open
ball $B_{\rho}(q)$ centered at $q$ with radius $\rho$ is contained in
Tub$^+(A_0)$ (resp. Tub$^-(A_0)$) and any such $B_{\rho}(q)$ contains
a small compact minimal annulus $C_\ell$ bounded by two circles (in
exponential coordinates) contained in $B_{\rho}(q)\cap
(P^\ell \times\rr)$ and $B_{\rho}(q)\cap (P^{-\ell}\times\rr)$, for
some small $\ell>0$ (see Figure \ref{fig:annuli}).  We further can take
$\rho>0$ satisfying:
\begin{enumerate}
\item $3\rho< \mbox{dist}(A_0,\xi),$
\item $B_{2\rho}(q_1)\cap \mbox{Tub}^{-}(A_0)=\emptyset,$ where
  $q_1=\xi\cap (\alpha_0\times\rr)$.
\end{enumerate}

\begin{lemma}\label{lema}
If there is a compact minimal surface ${\cal M} \subset \mathcal
{R}_\ell \cap A_0^+$ with $\partial {\cal M} \subset A_0$, then ${\cal
  M}$ is actually a subdomain of $A_0$.
\end{lemma}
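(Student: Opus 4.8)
The plan is to use the foliation $\{A_s\}_{s\in[-\delta,\delta]}$ produced by the stability of $A_0$ as a family of minimal barriers and to run a one–sided sweeping argument with the leaves lying on the $A_0^+$ side. Recall that $A_s\subset A_0^+$ for $s>0$, that $A_0$ separates the slab $\mathcal R_d$, and that these leaves foliate the tubular neighborhood $\mathrm{Tub}^+(A_0)=\cup_{s\in[0,\delta]}A_s$. Since ${\cal M}$ is compact and minimal with $\partial{\cal M}\subset A_0$, and since there is no closed minimal surface in $N^3$ (apply the maximum principle to the horizontal slices $\hh\times\{t\}$, or Lemma~\ref{rotcatenoid}), I may assume $\partial{\cal M}\neq\emptyset$. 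The goal is then to show that the leaf of largest parameter touched by ${\cal M}$ is $A_0$ itself, which forces ${\cal M}\subset A_0$.

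The first, and main, point is to confine ${\cal M}$ to the foliated region $\mathrm{Tub}^+(A_0)$, and this is where I expect the real difficulty to lie. Here I would combine both standing hypotheses, $\ {\cal M}\subset\mathcal R_\ell\cap A_0^+$ and $\partial{\cal M}\subset A_0$. Because $\partial{\cal M}\subset A_0\cap\mathcal R_\ell$, which is a compact piece of the waist of $A_0$, a maximum principle / convex–hull type confinement in $\hh\times\rr$ (comparing ${\cal M}$ both with the equidistant planes $P^{\pm\ell}$ bounding the slab and with horizontal slices) keeps ${\cal M}$ inside a fixed compact neighborhood of $A_0\cap\mathcal R_\ell$ on the $A_0^+$ side. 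Taking $\ell$ small with respect to the foliation width $\delta$, this neighborhood is contained in $\mathrm{Tub}^+(A_0)$, so that every point $p\in{\cal M}$ lies on a unique leaf $A_{s(p)}$. This is precisely why the statement restricts to the thin slab $\mathcal R_\ell$ rather than to all of $A_0^+$: one must guarantee that ${\cal M}$ cannot escape the region where the minimal barriers $A_s$ are defined.

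With ${\cal M}\subset\mathrm{Tub}^+(A_0)$ in hand, the conclusion follows from a standard extremal–leaf argument. I would consider the continuous function $p\mapsto s(p)$ on the compact set ${\cal M}$ and set $s_{\max}=\max_{p\in{\cal M}}s(p)$, attained at some $p_0$. If $p_0\in\partial{\cal M}$, then $s(p_0)=0$ since $\partial{\cal M}\subset A_0$; as $s\geq0$ on ${\cal M}$, this forces $s\equiv0$, i.e. ${\cal M}\subset A_0$, which is the claim. If instead $p_0$ is an interior point, then ${\cal M}$ is tangent to the leaf $A_{s_{\max}}$ at $p_0$ and lies locally on the side $\{s\leq s_{\max}\}$; since ${\cal M}$ and $A_{s_{\max}}$ are both minimal, the interior maximum principle gives ${\cal M}\subset A_{s_{\max}}$ near $p_0$, and unique continuation together with connectedness propagates this to all of ${\cal M}$. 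But then $\partial{\cal M}\subset A_{s_{\max}}\cap A_0=\emptyset$ for $s_{\max}>0$ (distinct leaves are disjoint), contradicting $\partial{\cal M}\neq\emptyset$. Hence $s_{\max}=0$ and ${\cal M}$ is a subdomain of $A_0$, as desired.
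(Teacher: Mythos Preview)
Your extremal-leaf argument in the last paragraph is fine and matches the paper's remark that, were ${\cal M}\subset\mathrm{Tub}^+(A_0)$, the outermost leaf $A_{s_{\max}}$ would force ${\cal M}\subset A_0$ by the interior maximum principle. The gap is your confinement step. Comparing ${\cal M}$ with horizontal slices and vertical geodesic planes traps ${\cal M}$ only in a compact box $K$ around $A_0$---this is exactly the paper's first step---but $K\cap A_0^+\cap\mathcal R_\ell$ is \emph{not} contained in $\mathrm{Tub}^+(A_0)$, and shrinking $\ell$ does not help. Look at the symmetry slice $\alpha_0\times\rr$: the waist $A_0\cap(\alpha_0\times\rr)$ is a closed convex curve, $A_0^-$ is its interior, $A_0^+$ its exterior, and $K$ is essentially the circumscribed rectangle. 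The ``corner'' regions of this rectangle lying outside the waist sit in $A_0^+$ at a fixed positive distance (comparable to the waist diameter) from $A_0$, hence well outside the thin band $\mathrm{Tub}^+(A_0)$. As $\ell\to 0$ the set $A_0\cap\mathcal R_\ell$ collapses onto the full waist curve, so the box $K$ built around $\partial{\cal M}$ keeps its macroscopic size and the corners persist. In $\hh\times\rr$ the only totally geodesic barriers are horizontal slices and vertical planes $\gamma\times\rr$; there is no tilted planar barrier to shave off these corners, so the convex-hull intuition from $\rr^3$ does not transfer. The paper says this explicitly: ``it still remains some room between the last annulus of the foliation $A_s$ and $\partial K$.''

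That residual room is precisely what the paper handles with the Dragging Lemma. Assuming by contradiction that ${\cal M}$ escapes $\mathrm{Tub}^+(A_0)$, one positions a small minimal annulus $C_\ell$ (with $\partial C_\ell\subset P^{\ell}\cup P^{-\ell}$) near a suitable point of the waist of $A_{\delta/2}$ so that $C_\ell$ meets ${\cal M}$ while $\partial C_\ell\cap{\cal M}=\emptyset$, and then slides $C_\ell$ by ambient isometries through the corner region and out of $K$, keeping $\partial C_\ell$ outside $\mathcal R_\ell$ throughout. The Dragging Lemma then produces a point of ${\cal M}$ outside $K$, contradicting the first confinement. Without this device (or an equivalent one) your argument is incomplete.
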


\begin{proof}
Let $K$ be the compact set containing $A_0$ and bounded by horizontal
and vertical planes. The surface ${\cal M}$ cannot have points outside
$K$ without having an interior point of contact with a horizontal
slice or a vertical geodesic plane, contradicting the maximum
principle.  Moreover, ${\cal M}$ cannot be entirely contained in ${\rm
  Tub}^+(A_0)$, otherwise there would be a last leaf of the foliation
having a last point of contact, a contradiction again with the maximum
principle.

However, it still remains some room between the last annulus of the
foliation $A_s$ and $\partial K$. But we can find a catenoid $C_\ell$
which intersects ${\cal M}$ without intersecting the boundary by
choosing correctly the point $q$ on the waist circle of $A_{\delta/2}$
and we could use the Dragging Lemma to find points of ${\cal M}$
outside $K$ by moving $C_\ell$ into $A_{\delta/2}^+$. This proves that
${\cal M}$ has to be contained in $A_0$.
\end{proof}

In the half-space model of $\hh$ with orthonormal basis $(e_1,e_2)$
and the geodesic $\alpha_0$ represented by the half-line $\{x=0\}$,
the equidistant curves $\alpha_{-d}$ and $\alpha_{d}$ are half-lines
making with $\{x=0\}$ angles $\pm \theta$, being $\sin\theta=\tanh
d$. In this model, translations fixing the point at infinity $(0,0)$
correspond to homoteties and rotations centered at $(0,0)$. Any one of
these translations corresponds to a horizontal isometry in $\hr$ that
produces a Killing field $Y$ which is tangent to $A_0$ along the curve
$A_0 \cap (\alpha_0 \times \rr)$.

Given a point $p \in \mathcal{R}_d$ of a surface with an unit normal
vector $N(p)$ orthogonal to $Y(p)$, there exists an isometry ${\cal
  I}$ such that ${\cal I}(A_0)$ is an annulus passing through $p \in
S$, with $N(p)$ as its unit normal vector.  That isometry is nothing but a
combination of horizontal and vertical translations keeping the
boundary outside $\mathcal{R}_d$.

\medskip

Concerning the same result in $\psl$, we need the existence of a
compact stable minimal annulus $A_0$ with boundary curves outside the
vertical slab $\mathcal{R}_d$ bounded by $P^{-d}\cup P^{d}$ and the
existence of a non nulhomotopic curve $\gamma$ in $A_0$ along where
the Killing vector $Y$ is tangent to the annulus. Moreover we need to
prove that $\gamma$ is at least at distance $2d$ from the boundary
$\partial A_0$ in such a way that $\partial {\cal I}(A_0) \cap
(P^{-d}\cup P^{d}) = \emptyset$, when ${\cal I}(A_0)$ passes through
an arbitrary point $p$ of $\mathcal{R}_d$.

\begin{lemma} 
\label{pslannuli}
Given $d>0$ small enough, there exists a compact stable minimal
annulus $A_0$ in $\psl$ bounded by two large enough circles (in
exponential coordinates) $\eta_+\subset P^d$ and $\eta_{-}\subset
P^{-d}$. This annulus $A_0$ has a non nul homotopic curve $\gamma$
where the vector field $Y$ is tangent to $A_0$.  This curve $\gamma$
is at horizontal distance at least $2d$ from the boundary $\partial
A_0$.
\end{lemma}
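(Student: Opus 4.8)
The plan is to mimic the construction of $A_0$ already carried out in $\hr$, replacing the reflection across $\alpha_0\times\rr$ (which is no longer an isometry once $\tau\neq0$) by the orientation-preserving involution $\sigma(x,y,t)=(-x,y,-t)$ of $\psl$, i.e.\ the rotation by $\pi$ about the horizontal geodesic $\{x=0,\ t=0\}$. A direct check shows $\sigma$ is an isometry of $\psl$ with $d\sigma=\mathrm{diag}(-1,1,-1)$ in the frame $\{E_1,E_2,E_3\}$ (hence orientation-preserving), that it fixes $\alpha_0\times\rr$ setwise and interchanges the two equidistant walls $P^d$ and $P^{-d}$ of $\mathcal R_d$, and that it conjugates to its inverse the parabolic one-parameter group fixing the endpoint $a_0=(0,0)$ of $\alpha_0$. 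Write $Y$ for the Killing field of that parabolic group; along $\alpha_0\times\rr$ the field $Y$ is transverse to this plane (it equals $y^2\partial_x$ when $\tau=0$), which is exactly the feature that will force $Y$ to be tangent to $A_0$ along the curve we seek.

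First I would produce $A_0$. Fix two large circles (in exponential coordinates) $\eta_+\subset P^d$ and $\eta_-=\sigma(\eta_+)\subset P^{-d}$, and minimize area among annuli with boundary $\eta_+\cup\eta_-$. The walls $P^{\pm d}$ have constant mean curvature with mean curvature vector pointing toward $\alpha_0\times\rr$, so $\mathcal R_d$ is mean-convex and the minimizer stays inside it; the rotationally invariant catenoids of Pe\~nafiel \cite{pe} and the disks $S_h$ of Folha--Pe\~nafiel \cite{FP}, together with the non-existence Lemma \ref{rotcatenoid}, act as barriers that confine the minimizer and, crucially, prevent it from degenerating into two disks. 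Hence it is a genuine embedded minimal annulus $A_0$, stable because area-minimizing; stability supplies both the foliation $\{A_s\}$ of a tubular neighbourhood and uniform curvature bounds. Since the boundary $\eta_+\cup\eta_-$ is $\sigma$-invariant, $A_0$ can be taken $\sigma$-invariant (after symmetrizing if necessary). Alternatively one may take $A_0$ to be a compact piece, cut by $P^{\pm d}$, of the horizontal catenoid of \cite{MCH}, whose neck provides $\gamma$ directly; the minimization approach, however, makes stability immediate.

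Next I would locate $\gamma$. Because $Y$ is Killing and $A_0$ is minimal, the normal component $u=\langle Y,N\rangle$ is a Jacobi field on $A_0$, and its zero set is exactly the locus where $Y$ is tangent to $A_0$. The $\sigma$-invariance gives $\sigma_\ast Y=-Y$, while at a fixed point $p_0\in A_0\cap\{x=0,\,t=0\}$ the action $d\sigma=\mathrm{diag}(-1,1,-1)$ forces $T_{p_0}A_0=\mathrm{span}(E_1,E_3)$ and $N(p_0)\parallel E_2$, so $\sigma$ preserves the coorientation and therefore $u\circ\sigma=-u$. Since $\sigma$ interchanges $\eta_+$ and $\eta_-$, the Jacobi field $u$ has opposite signs on the two boundary circles; its nodal set then separates them and must contain a component $\gamma$ that is non-nullhomotopic in the annulus, along which $Y$ is tangent to $A_0$, as required.

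Finally, $\sigma$-invariance forces a central component of $\gamma$ to be symmetric about $\{x=0,\,t=0\}$, hence to remain in a fixed central region of the slab, while enlarging $\eta_\pm$ pushes $\partial A_0$ far from that region; a comparison/curvature-estimate argument then yields $\mathrm{dist}(\gamma,\partial A_0)\ge 2d$, the margin needed so that the positioning isometries (a combination of the parabolic $Y$-flow, the hyperbolic translations along $\alpha_0$, and the vertical translations) carry $\partial A_0$ clear of $P^{-d}\cup P^d$ while $\gamma$ sweeps $\mathcal R_d$. I expect the main obstacle to be the production of $\gamma$: because the reflection symmetry is lost for $\tau\neq0$, one must genuinely control the sign of $u$ on the two walls (equivalently the coorientation sign in the computation above) and exclude the possibility that the entire nodal set is nullhomotopic; ensuring that the minimizer is a nondegenerate annulus rather than a pair of disks, and making the distance estimate quantitative as the circles grow, are the remaining technical points.
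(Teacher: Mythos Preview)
Your approach is quite different from the paper's. The paper neither minimizes area nor uses the involution $\sigma$; it takes $A_0$ to be a compact piece of the complete horizontal catenoid constructed in \cite{MCH} and locates $\gamma$ by a blow-up argument: as the distance $2\epsilon$ between the asymptotic planes of that catenoid tends to $0$ the neck pinches, and after Euclidean rescaling the annuli converge to a standard catenoid in $\rr^3$. Since $\alpha_0\times\rr$ remains transverse throughout, the limit has horizontal flux, so for $\epsilon$ small the annulus is ``almost symmetric'' in the Euclidean model, and both $\gamma$ and the $2d$-distance estimate are read off from that approximate symmetry. Your route would deliver an \emph{exact} $\sigma$-symmetry, which is conceptually cleaner; the paper's blow-up, on the other hand, makes the non-degeneration of the annulus and the distance bound essentially automatic once $d$ is small.

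There is, however, a genuine gap precisely at the point you flag. The identity $u\circ\sigma=-u$ requires $\sigma$ to preserve the coorientation of $A_0$. Your fixed-point computation does establish this \emph{provided} $A_0$ meets the fixed geodesic $\{x=0,\,t=0\}$: at such a $p_0$ the tangent plane is indeed forced to be $\mathrm{span}(E_1,E_3)$, since the alternative ($E_2\in T_{p_0}A_0$) would make the fixed set of $\sigma|_{A_0}$ a circle embedded in a line. But nothing in your construction guarantees that intersection. If $A_0$ avoids $\{x=0,\,t=0\}$ entirely, then $\sigma|_{A_0}$ is a \emph{free} involution interchanging the boundary circles; a free orientation-preserving such involution is impossible (the quotient would be orientable with $\chi=0$ and a single boundary circle), so $\sigma|_{A_0}$ would be orientation-reversing, the coorientation would flip, one gets $u\circ\sigma=+u$, and the nodal-set argument collapses. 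The paper's rescaling sidesteps this because in the Euclidean limit the waist visibly crosses the axis; to complete your approach you would need an independent reason why a $\sigma$-invariant minimizing annulus in $\psl$ must meet $\{x=0,\,t=0\}$.
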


\begin{proof}
We need to prove that, for $d >0$ small enough, there is a compact
stable annulus which is almost symmetric in an Euclidean sense. In
\cite{MCH}, the authors constructed a minimal annulus which is
asymptotic to two vertical planes $\alpha_{\epsilon}\times \rr$ and
$\alpha_{-\epsilon}\times \rr$ in $\psl$.  Let $2\epsilon$ be the
distance between these asymptotic planes and let $\alpha_0 \times \rr$
be the plane of Euclidean symmetry between the two planes. When
$\epsilon \to 0$, the set of annuli are converging to the double
covering of $\alpha_0 \times \rr$, and the waist circle of these
complete annuli is shrinking to a point. Since the curvature blows-up,
we can use an Euclidean homothety to modify the model and see forming
a convergent sequence of bounded curvature annuli which converge (see
\cite{MCH} for details) to a catenoid. Since during the process, the
plane $\alpha_0 \times \rr$ is transverse to the annuli, the limit is
a horizontal catenoid with horizontal flux. Hence for $\epsilon$ small
enough the complete annulus has almost a plane of symmetry in the
Euclidean model. We can consider a compact stable subdomain of this
example to insure that the boundary is at non zero distance from the
curve $\gamma$.
\end{proof}

\subsection{Conformal minimal immersion and finite total curvature.} 
\label{subsec:ftc}
In this section we summarize some of the results proved
in~\cite{hr,hnst}. Let ${\cal M}$ be a complete Riemann surface and
$X=(F,h):{\Sigma}\to\hr$ be a conformal minimal immersion with ${\cal
  M}=X(\Sigma)$.  We take a local conformal coordinate $z$ on
$\Sigma$. The Hopf differential associated to the harmonic map $F:
\Sigma\to\hh$ is a quadratic holomorphic differential globally defined
on $\Sigma$ and can be written as $Q=\phi(z) dz^2$. The real harmonic
function $h:\Sigma\to\rr$ can be recovered as $h=2{\rm Re }\int
-2i\sqrt{\phi}\, dz$.

If ${\cal M}$ has finite total curvature, then the immersion is proper
and by Huber's Theorem, any end $M$ of the surface ${\cal M}$ can be
conformally parameterized by ${\cal U}=\{z\in\C\ ;\ |z|\geq R\}$, for
some $R>1$ with $X({\cal U})=M$.  The Hopf differential $Q$ extends
meromorphically to the puncture $z=\infty$ and we can write
$\sqrt{\phi(z)}= \sum_{k\geq 1}a_k\,z^{-k}+P(z)$, where $a_k\in\C$ for
any $k\geq 1$ and $P$ is a polynomial of degree $m\geq 0$. We say that
the end $M$ has degree $m$.

The fact that $\phi$ extends meromorphically at the puncture implies
that the surface is transverse to any horizontal plane $\{h(z)=\pm
t\}$, for $t \geq t_0>0$ large enough and ${\cal U}\cap
X^{-1}(\{t=t_0\})$ has a finite number of connected components. The
image by $X$ of each one of these components is the boundary of a
vertical sheet $E$ contained in $X({\cal U})$.

For any conformal immersion, there is a function $\omega: \Sigma \to
\rr$ such that the third coordinate of the unit normal vector is given
by
$$n_3=\tanh \omega.$$ This function corresponds to a Jacobi field on
${\cal M}$, hence $\omega$ satisfies the differential elliptic
equation
$$\Delta_0 \omega -2 |\phi| \sinh (2\omega) = 0,$$
where $\Delta_0$ denotes the Laplacian in the Euclidean metric
$|dz|^2$. The conformal metric induced by the immersion is given by
$$ds^2= 4 \cosh^2 \omega |\phi| |dz|^2.$$
\begin{lemma} \cite{hr,hnst}
If $\phi$ is without zeroes on ${\cal U}$, the function $\omega$
satisfies the uniform decay estimate
$$|\omega (p)|Ê\leq Ce^{{\rm dist}(p, \partial {\cal U})},$$
where ${\rm dist} (p, \partial {\cal U})$ is the distance with the flat metric $|dw|^2=|\phi (z)||dz|^2$.
\end{lemma}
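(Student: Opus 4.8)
The plan is to pass to the flat metric $|dw|^2=|\phi|\,|dz|^2$ determined by the Hopf differential. Since $\phi$ has no zeros on $\mathcal{U}$, this metric is smooth and complete up to the puncture: writing $\sqrt{\phi}=\sum_{k\ge 1}a_k z^{-k}+P(z)$ with $\deg P=m\ge 0$, the length element behaves like $|z|^{m}\,|dz|$ near $z=\infty$, so the flat distance to the puncture is infinite and the end is complete. In this metric the Laplace operator is $\Delta=|\phi|^{-1}\Delta_0$, so the elliptic equation $\Delta_0\omega-2|\phi|\sinh(2\omega)=0$ becomes the sinh--Gordon equation
$$\Delta\omega=2\sinh(2\omega)$$
with respect to the complete flat metric. (Even though $\sqrt{\phi}$ may be multivalued, $|\phi|$, the metric and $\omega$ are all single-valued, so I work intrinsically with the metric rather than with the coordinate $w$.) Finite total curvature forces the tangent planes to become vertical at the end, hence $n_3=\tanh\omega\to 0$ and $\omega\to 0$ at the puncture; in particular $\omega$ is bounded on $\mathcal{U}$.

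Next I would run a barrier/maximum-principle argument against the linearization. Let $d(p)=\mathrm{dist}(p,\partial\mathcal{U})$ denote the flat distance and set $\psi=A\,e^{-\mu d}$ for constants $A,\mu>0$ to be fixed. A direct computation gives $\Delta\psi=(\mu^2-\mu\,\Delta d)\,\psi$, and since the flat distance to a compact convex set satisfies $\Delta d\ge 0$ away from the cut locus, one gets $\Delta\psi\le \mu^2\psi$. Choosing $\mu\le 2$ we have $\mu^2\psi\le 4\psi\le 2\sinh(2\psi)$, so $\psi$ is a positive supersolution of the sinh--Gordon equation. Now take $A\ge\sup_{\partial\mathcal{U}}|\omega|$, so that $\omega\le\psi$ on $\partial\mathcal{U}$, while $\omega\to 0$ and $\psi\to 0$ at the puncture. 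If $\omega-\psi$ had a positive interior maximum at $q$, then at $q$ one has $\Delta(\omega-\psi)\le 0$ yet $\Delta(\omega-\psi)\ge 2\sinh(2\omega(q))-2\sinh(2\psi(q))>0$, because $\sinh$ is increasing and $\omega(q)>\psi(q)\ge 0$, a contradiction. Hence $\omega\le\psi$ throughout $\mathcal{U}$, and applying the same argument to $-\omega$ (the equation is odd in $\omega$) yields $|\omega(p)|\le A\,e^{-\mu\,\mathrm{dist}(p,\partial\mathcal{U})}$, the claimed exponential decay.

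The main obstacle is making the comparison rigorous on the non-compact end. First, the distance function $d$ is only Lipschitz across its cut locus, so the inequality $\Delta\psi\le 2\sinh(2\psi)$ must be read in the barrier (or viscosity) sense and the maximum principle applied at points where $d$ is smooth, using supporting smooth distance functions elsewhere. Second, the positive maximum of $\omega-\psi$ need not be attained on a noncompact surface; this is handled either by the Omori--Yau maximum principle (valid here because the flat metric is complete with bounded geometry on the end) or, more elementarily, by exhausting $\mathcal{U}$ by the subannuli $\{R\le|z|\le R_n\}$, comparing on each with boundary data controlled by $\sup_{|z|=R_n}|\omega|\to 0$, and letting $R_n\to\infty$. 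Verifying the sign $\Delta d\ge 0$ and the completeness of $|dw|^2$ near the puncture --- both consequences of $\phi\ne 0$ and of the pole structure of $\sqrt{\phi}$ at $\infty$ --- is what guarantees that $\psi$ is a genuine supersolution and that no mass escapes to the end.
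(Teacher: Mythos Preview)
The paper does not supply its own proof of this lemma: it is quoted from \cite{hr,hnst} and used as a black box. So there is no ``paper's proof'' to compare against, only the original references.

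Your sketch is the right idea and is essentially what those references do: pass to the flat metric $|\phi|\,|dz|^2$, where the equation becomes the autonomous sinh--Gordon equation $\Delta\omega=2\sinh(2\omega)$, and then compare $\omega$ with an exponential supersolution. The boundedness of $\omega$ (from $n_3\to 0$) and the oddness of the equation are exactly the ingredients needed.

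The one soft spot is your global use of $d=\mathrm{dist}(\cdot,\partial\mathcal U)$ together with the claim $\Delta d\ge 0$. That inequality requires $\partial\mathcal U$ to be convex \emph{in the flat metric} $|\phi|\,|dz|^2$, and the circle $\{|z|=R\}$ need not be convex for that metric --- its geodesic curvature depends on $\phi$. You flag this at the end but do not verify it, and it is not automatic. The clean way around this (and the route taken in \cite{hr,hnst}) is pointwise: for each $p$, set $r=\mathrm{dist}(p,\partial\mathcal U)$ and work on the flat disk $D_r(p)$. Since $\phi$ has no zeros, $D_r(p)$ is isometric to a Euclidean disk of radius $r$, and one compares with a \emph{radial} supersolution (either $Ae^{-\mu\rho}$ with $\rho$ the distance to $\partial D_r(p)$, or a Bessel-type barrier) on that disk alone. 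This gives $|\omega(p)|\le C(\|\omega\|_\infty)\,e^{-\mu r}$ directly, with no convexity hypothesis and no cut-locus or Omori--Yau issues, since each comparison is on a compact disk. Your global barrier can be repaired in this way with no change to the conclusion.
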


A finite total curvature end satisfies the hypothesis of this
lemma. If we reparametrize a vertical sheet $E$ of $X({\cal U})$ by
its third coordinate factor we obtain a conformal parameter $w=u+it$
with $\phi (w)=\frac14 (dw)^2$. In this parametrization the level set
$\Gamma_h=E \cap \{ t=t_1\}$ is a curve parametrized by $u \to
F(u,t_1)$ with geodesic curvature in $\hh$ given by (see \cite{h}):
$$k_g= \frac{-\partial_u \omega }{\cosh \omega} \hbox{ and } |\partial
_u F (u,t_1)|_{\hh}= \cosh^2 \omega,$$ and the curve $E \cap \{ u=
u_1\}$ projectes onto $\hh$ on some horizontal curve $\Gamma_v$
parametrized by $t \to F(u_1,t)$ with curvature (see \cite{h}):
$$k_g= \frac{\partial _t\omega }{\sinh \omega} \hbox{ and }
|\partial_t F (u_1,t)|_{\hh}= \sinh^2 \omega.$$ To describe the
behavior of the horizontal curves $\Gamma_h$ and $\Gamma_v$ we use the
uniform decay estimate and an interior gradient estimate
\cite{hr,hnst} to obtain
$$|\omega | (u,t) + |\nabla \omega|(u,t) Ê\leq Ce^{-t-|u|},$$ for $u
\in \rr$ and $t \geq t_1$. In particular, the level curves $\Gamma_h$
converge on compact set to horizontal geodesics at infinity while
curves $\Gamma_v$ are non proper.

\section{Characterization}

In~\cite{hr,hnst} it is proved that any complete minimal surface with
finite total curvature in $\hr$ is proper, has finite topology and
each one of its ends is asymptotic to an admissible polygon at
infinity (see Definition~\ref{def:asymp}).  In the following theorem
we prove that these conditions are not only necessary but also
sufficient.

\begin{theorem}\label{th:main}
  Let $M\subset N^3$ be a properly immersed minimal annulus with one
  boundary component and asymptotic to an admissible polygon at
  infinity. Then $M$ has finite total curvature.
\end{theorem}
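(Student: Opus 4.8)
The plan is to show that, outside a compact set, every vertical sheet $E$ of $M\cap\{t>t_0\}$ (resp. $M\cap\{t<-t_0\}$) and every horizontal sheet of $M$ near a vertical line $L_i=\{a_i\}\times[-1,1]$ of the polygon is a horizontal multigraph over the corresponding vertical geodesic plane, and then to deduce bounded curvature and finite total curvature. The asymptoticity hypothesis (Definition~\ref{def:asymp}) gives exactly the information needed to start: for each top vertical sheet $E$ one has $\partial_\infty E\subset\partial_\infty(\alpha_i\times\rr)$ for some $i$, and likewise with $\beta_i$ below.

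\textbf{Step 1 (Confinement).} First I would trap each sheet in a thin vertical slab by a barrier argument. Fix a top sheet $E$ with $\partial_\infty E\subset\partial_\infty(\alpha_i\times\rr)$ and, given $d>0$ small, let $\mathcal R_d$ be the slab bounded by the equidistant planes $P^{\pm d}$ to $\alpha_i\times\rr$. Using translated copies of the graphs $S_h$ (and their limit $S_\infty$), whose horizontal slices are equidistant curves to $\alpha_i$ and whose asymptotic boundaries can be placed disjoint from $\partial_\infty E$, I would slide these barriers toward $\alpha_i\times\rr$ from either side; the maximum principle then prevents $E$ from leaving $\mathcal R_d$ outside a compact set. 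Lemma~\ref{rotcatenoid}, applied with the vertical catenoids $\mathcal C(h)$, guarantees that the sheet genuinely escapes to infinity inside the slab rather than being trapped in a horizontal band. The same argument, with horocylinder barriers in place of $S_h$, confines each horizontal sheet near $L_i$ to a horocylindrical neighborhood of a geodesic plane $\alpha\times\rr$ having $a_i$ as an endpoint.

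\textbf{Step 2 (Multigraph via the Dragging Lemma).} Once $E\setminus K$ lies in a slab $\mathcal R_\ell$ with $\ell$ small, I would prove it is a horizontal killing graph. The key object is the stable annulus $A_0$ (in $\psl$, Lemma~\ref{pslannuli}), which carries a non-null-homotopic curve $\gamma$ along which the horizontal killing field $Y$ is tangent, with $\gamma$ at horizontal distance $>2\ell$ from $\partial A_0$. Arguing by contradiction, suppose $\langle N,Y\rangle$ vanishes at some $p\in E$, so $Y(p)$ is tangent to $E$. Then there is an ambient isometry $\mathcal I$ with $\mathcal I(A_0)$ passing through $p$ with the same unit normal and with $p\in\mathcal I(\gamma)$, while $\partial\mathcal I(A_0)$ stays outside $\mathcal R_\ell$. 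Moving $\mathcal I(A_0)$ by the horizontal isometries that keep its boundary outside the slab and invoking the Dragging Lemma~\ref{DL}, I obtain a $\mathcal C^1$ path in $E$ following the intersection $\mathcal I(A_0)\cap E$; by Lemma~\ref{lema} and the maximum principle this path can terminate only by forcing points of $E$ outside the confined region or onto $\partial M$, a contradiction. Hence $\langle N,Y\rangle\neq0$ and $E$ is a killing graph; the same scheme, run along the geodesic direction orthogonal to $\alpha_i\times\rr$, yields the geodesic multigraph structure.

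\textbf{Step 3 (Bounded curvature) and Step 4 (Finite total curvature).} A horizontal killing graph carries the positive Jacobi field induced by the ambient killing field, so by the Fischer-Colbrie–Schoen criterion \cite{Fischercolbrie} it is stable, and Schoen's estimate \cite{S} gives a uniform bound on $|A|$ away from $\partial M$, with the tangent planes becoming vertical (so the curvature tends to $0$ at infinity); in the geodesic description the same bound follows from Lemma~\ref{bcurv}. To pass to $\int_M|K|<\infty$, in $\hr$ (where $K\le0$) I would apply the conformal representation of Subsection~\ref{subsec:ftc}: the Hopf differential extends meromorphically to the puncture and $\omega$ obeys the uniform decay estimate of \cite{hr,hnst}, whence integration of the Gaussian curvature over the multigraph end is finite (indeed $2\pi$-quantized). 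In $\psl$, where $K\le\tau$ has no definite sign, I would instead invoke the result of \cite{hmp}, that a minimal surface of uniformly bounded curvature which is a geodesic asymptotic multigraph has finite total curvature. Summing over the finitely many zones gives the claim. The main obstacle is Step 2: ruling out tangency of the killing field to the end is exactly what upgrades mere confinement to a genuine multigraph, and it requires the delicate choice of barrier annulus (so that $\gamma$ sits well inside the slab while $\partial\mathcal I(A_0)$ stays outside it) together with careful control of the dragged intersection so that its continuation can only end in contradiction.
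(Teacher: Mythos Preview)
Your outline tracks the paper's: confinement by $S_h$ barriers, a Dragging-Lemma argument to get the multigraph structure, stability/Lemma~\ref{bcurv} for curvature bounds, and then the decay of $\omega$ (or \cite{hmp} in $\psl$) for finite total curvature. Steps 1, 3, 4 are essentially right, though in Step~4 you should not invoke meromorphic extension of $Q$ --- that is a consequence of finite total curvature, not an input. The paper instead uses only that on a horizontal graph $Q$ has no zeros, applies the decay $|\omega|\le C_0e^{-t}$ on each vertical sheet, and then upgrades it to $|\omega|\le C e^{-t-|u|}$ by gluing to the adjacent horizontal sheets, after which $\int|K|\,dA<\infty$ follows by direct integration.

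The genuine gap is Step~2. Placing $\mathcal I(A_0)$ tangent to $E$ at $p$ and then ``dragging'' does not produce a contradiction by itself: the Dragging Lemma only asserts that the intersection persists as you move $A_0$, so the dragged path happily wanders inside the slab without being forced to leave $\mathcal R_\ell$ or to hit $\partial M$. Lemma~\ref{lema} alone does not close this. The paper's mechanism is more delicate and topological. First one shows (via Lemma~\ref{rotcatenoid} and Scherk barriers) that the relevant sheet $E'$ is \emph{simply connected}. At the tangency point $q$, the local intersection $X^{-1}(A_0\cap E')$ consists of at least two equiangular curves, so there are two local components $D_1,D_2$ mapped into $A_0^{-}$; Lemma~\ref{lema} is used to show $D_1,D_2$ lie in \emph{distinct} connected components of $X^{-1}(E'\cap A_0^{-})$. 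One then drags small annuli $C_\ell$ (not $A_0$) from each $D_j$ in two opposite directions --- along a geodesic into $P^{+}$ down to a prepared compact $K_0'$, and along a geodesic into $P^{-}$ down to $K_0$ --- producing arcs $\Gamma_j,\Gamma_j'$ in $U'$ that are connected through $K,K'$ to form a simple closed curve $\widetilde\Gamma$ bounding a disk $D\subset U'$. Finally, a parity argument finishes: $\Gamma_1\cup\Gamma_1'$ crosses a fixed vertical plane $P$ an odd number of times, but the curves of $X^{-1}(P)\cap D$ (which, by further dragging of $C_\ell$, are shown to stay inside $A_0^{-}$ and hence in the $D_1$-component) pair these crossings, forcing the count to be even. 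This odd/even contradiction is the heart of the proof; your sketch does not supply it, and without it the transversality of $E'$ to $Y$ is unproven.
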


\begin{corollary}
  Let ${\cal M} \subset N^3$ be a properly immersed minimal surface with
  finite topology and possibly compact boundary. Suppose that each end
  of ${\cal M}$ is asymptotic to an admissible polygon at infinity. Then ${\cal M}$
  has finite total curvature.
\end{corollary}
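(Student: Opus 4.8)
The plan is to deduce the corollary from Theorem~\ref{th:main} by cutting ${\cal M}$ into a compact core together with finitely many annular ends, and then using that total curvature is additive over such a decomposition. Since ${\cal M}$ has finite topology, it is homeomorphic to a compact surface with boundary $\overline{\cal M}$ from which a finite number of interior points $p_1,\dots,p_n$ have been removed, these punctures being the ends of ${\cal M}$. Fixing disjoint punctured-disk neighborhoods of the $p_j$, I would take a compact subsurface $\mathcal{K}_0\subset{\cal M}$ containing the (compact) boundary $\partial{\cal M}$ and such that ${\cal M}\setminus\mathcal{K}_0$ is a disjoint union of annular ends $M_1,\dots,M_n$, where each $M_j$ is homeomorphic to $\esf^1\times(0,+\infty)$ and has a single compact boundary circle $\partial M_j\subset\partial\mathcal{K}_0$.

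Next I would check that each $M_j$ satisfies the hypotheses of Theorem~\ref{th:main}. Being a closed subset of the properly immersed surface ${\cal M}$, each $M_j$ is itself a properly immersed minimal annulus, with one compact boundary component $\partial M_j$. Moreover, $M_j$ coincides with the $j$-th end of ${\cal M}$ outside a compact set, so $\partial_\infty M_j$ equals the asymptotic boundary of that end; by hypothesis the latter is contained in an admissible polygon ${\cal P}_j$. When ${\cal P}_j$ is non-embedded, the condition on vertical sheets in Definition~\ref{def:asymp} is inherited as well, since for $t_0$ large the vertical sheets of $M_j\cap\{|t|>t_0\}$ are exactly those of the end. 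Hence Theorem~\ref{th:main} applies and gives $\int_{M_j}|K|\,dA<\infty$ for every $j=1,\dots,n$.

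Finally, the total curvature of the compact core is trivially finite: the immersion is smooth, so the Gaussian curvature $K$ is continuous and therefore bounded on the compact surface $\mathcal{K}_0$, which has finite area; thus $\int_{\mathcal{K}_0}|K|\,dA<\infty$. Combining these pieces by additivity of the integral,
\[
\int_{\cal M}|K|\,dA=\int_{\mathcal{K}_0}|K|\,dA+\sum_{j=1}^n\int_{M_j}|K|\,dA<\infty,
\]
so ${\cal M}$ has finite total curvature. The only genuine point to be careful about — the main obstacle in an otherwise routine reduction — is the topological step: one must ensure that finite topology really does furnish a compact core whose complement splits into annular ends each carrying a single compact boundary curve, and that the asymptotic-polygon hypothesis (including the vertical-sheet condition in the non-embedded case) transfers verbatim to each such end. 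Once this is in place, Theorem~\ref{th:main} does all the analytic work.
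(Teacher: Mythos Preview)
Your proposal is correct and follows exactly the reduction the paper intends: finite topology gives a compact core together with finitely many annular ends, Theorem~\ref{th:main} handles each end, and the compact core contributes only finite curvature. The paper does not spell out a separate proof of this corollary, but the introduction makes the same point (``finite topology implies that each end $M$ is annular''), so your argument is precisely the one expected.
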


\begin{corollary}
  A complete minimal surface of $\hr$ has finite total curvature if an
  only if it is proper, has finite topology and each one of its ends
  is asymptotic to an admissible polygon at infinity.
\end{corollary}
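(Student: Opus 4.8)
The plan is to treat one annular end $M$ at a time and, using the structure recalled in Section~\ref{subsec:ftc}, to decompose its behavior at infinity into sheets. For $t_0>0$ large enough, $M\cap\{t>t_0\}$ consists of finitely many vertical sheets $E_1,\dots,E_k$ and $M\cap\{t<-t_0\}$ of finitely many vertical sheets, while near each vertical line $L_i=\{a_i\}\times[-1,1]$ of $\mathcal P$ the surface is organized into horizontal sheets inside a horocylinder $\mathcal H\times\rr$. By the hypothesis that $M$ is asymptotic to the admissible polygon $\mathcal P$, each vertical sheet $E$ of $M\cap\{t>t_0\}$ satisfies $\partial_\infty E\subset\partial_\infty(\alpha_i\times\rr)$ for some $i$ (and symmetrically with $\beta_i$ for $t<-t_0$). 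The goal is then reduced to the local statement: every such sheet is, outside a compact set, a horizontal killing and geodesic graph over a subdomain of the corresponding vertical geodesic plane, with uniformly bounded curvature. The horizontal sheets near the lines $L_i$ are handled in the same way, replacing the vertical plane by a plane $\alpha\times\rr$ with $\alpha$ a geodesic ending at $a_i$, and the slab by a horocylindrical slab.

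First I would confine each vertical sheet $E$ to a thin vertical slab $\mathcal R_d$ around $\alpha_i\times\rr$. Since $\partial_\infty E\subset\partial_\infty(\alpha_i\times\rr)$, the barriers $S_h$ of Subsection~\ref{subsub:Sh}, whose slices are equidistant curves to $\alpha_i\times\{t\}$, can be positioned on either side of $E$ and pushed towards $\alpha_i\times\rr$; the maximum principle forbids an interior contact, so $E$ cannot cross them, and letting $h\to\infty$ (so that $S_h\to S_\infty$ and the equidistant distance shrinks) traps $E$ in an arbitrarily thin slab outside a compact set. The vertical catenoids $\mathcal C(h)$ together with the non-existence Lemma~\ref{rotcatenoid} are used to rule out pieces of $E$ escaping in the $t$-direction within the critical slab $S$, guaranteeing that $E$ genuinely lies in $\mathcal R_d\cap A_0^+$ above a compact set.

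With $E$ confined to $\mathcal R_d$, I would prove the multigraph property by the moving-barrier technique built on the stable annulus $A_0$ (Lemma for $\hr$ and Lemma~\ref{pslannuli} for $\psl$). Recall that along its convex symmetry curve the unit normal of $A_0$ takes \emph{all} directions of $\alpha_0\times\rr$, and that $A_0$ comes with the foliation $\{A_s\}$, the tubes $\mathrm{Tub}^\pm(A_0)$ and the catenoids $C_\ell$. Suppose $E$ failed to be a killing graph: then at some point $p$ the unit normal $N(p)$ would be orthogonal to the killing field $Y(p)$, and by the remark following Lemma~\ref{lema} there is an isometry $\mathcal I$ (a combination of horizontal and vertical translations keeping $\partial(\mathcal I(A_0))$ outside $\mathcal R_d$) with $\mathcal I(A_0)$ passing through $p$ with normal $N(p)$. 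Applying Lemma~\ref{lema} to the compact piece of $E$ cut out in $\mathcal R_\ell\cap\mathcal I(A_0)^+$, and then dragging $\mathcal I(A_0)$ through the family $A_s$ by the Dragging Lemma~\ref{DL}, forces $E$ to coincide with a subdomain of $\mathcal I(A_0)$, contradicting the confinement and the asymptotic data. Hence $N$ is never orthogonal to $Y$ and $E$ is a horizontal killing graph (Definition~\ref{killinggraph}); the same argument along geodesics orthogonal to $\alpha_i\times\rr$ gives the geodesic graph structure (Definition~\ref{geodgraph}). Stability via Fischer-Colbrie--Schoen (in the killing case) and Lemma~\ref{bcurv} (in the geodesic case) then yield uniformly bounded curvature on the sheets.

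Finally I would upgrade bounded curvature to finite total curvature. In $\hr$ I would use the conformal description of Subsection~\ref{subsec:ftc}: the graph property and the uniform decay estimate $|\omega|(u,t)+|\nabla\omega|(u,t)\le Ce^{-t-|u|}$ show that the Gaussian curvature decays exponentially along each sheet, so $\int_M|K|\,dA<\infty$. In $\psl$, where $K$ need not be signed, I would instead invoke the result of Hauswirth--Manzano--Pe\~nafiel~\cite{hmp}: a minimal surface with uniformly bounded curvature that is a geodesic asymptotic multigraph has $\int_\Sigma|K|\,dA\le C$, which applies exactly because the previous step established both the bounded curvature and the geodesic multigraph structure. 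Summing over the finitely many sheets gives finite total curvature for the end, and hence for $M$. I expect the confinement step and, above all, the dragging argument establishing the graph property to be the main obstacles: the delicate points are positioning the barriers $S_h$ and $\mathcal C(h)$ correctly near the vertical edges $L_i$, and ensuring throughout the dragging that $\partial(\mathcal I(A_0))$ stays disjoint from $M$ so that Lemma~\ref{DL} and Lemma~\ref{lema} apply, with the $\psl$ case requiring the almost-symmetric annulus of Lemma~\ref{pslannuli} in place of the exactly symmetric $A_0$.
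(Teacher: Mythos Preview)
Your overall architecture matches the paper: confine each vertical sheet to a thin slab with the barriers $S_h$, prove the sheet is a horizontal graph using the annulus $A_0$ and the Dragging Lemma, then deduce finite total curvature from the exponential decay of $\omega$. The confinement step and the finite-total-curvature step are essentially what the paper does.

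However, your argument for the multigraph property has a genuine gap. You write that if $N(p)\perp Y(p)$ one places $\mathcal I(A_0)$ tangent to $E$ at $p$, applies Lemma~\ref{lema} to ``the compact piece of $E$ cut out in $\mathcal R_\ell\cap\mathcal I(A_0)^+$'', drags, and concludes $E\subset\mathcal I(A_0)$. But Lemma~\ref{lema} requires a compact minimal surface with boundary \emph{on} $A_0$, and tangency of $E$ to $A_0$ at $p$ does not produce such a piece: the sheet $E$ is unbounded, crosses $A_0$ through the tangency, and there is no a priori compact region of $E$ in $A_0^+$ with $\partial$ on $A_0$. Nor does dragging $A_0$ through the foliation $A_s$ force $E$ to coincide with $A_0$; the Dragging Lemma only produces a $\mathcal C^1$ path in the intersection, not an identification of surfaces.

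The paper's actual mechanism at this step is a topological parity argument, and it is substantially more delicate than what you sketch. From the tangency one gets at least two equiangular arcs of $X^{-1}(A_0\cap E')$ through $z$, bounding two local components $D_1,D_2$ lying in \emph{distinct} components of $X^{-1}(E'\cap A_0^-)$ (Lemma~\ref{lema} is used only here, to rule out a disk in $A_0^+$ joining them). One then drags the \emph{small} annuli $C_\ell$ (not $A_0$) in two opposite directions, into prearranged compacts $K_0$ and $K_0'$, to build four arcs that close up into a Jordan curve $\widetilde\Gamma\subset U$ bounding a disk $D$. A further family of translated $C_\ell$'s shows $X(D)$ misses a vertical plane $P$ except near the axis of $A_0$, and one reaches a contradiction by counting parities: $\Gamma_1\cup\Gamma_1'$ meets $P$ an odd number of times, while every arc of $D\cap X^{-1}(P)$ pairs these intersections. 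Without this construction (the two compacts $K_0,K_0'$, the two dragging directions, and the parity count) the graph property is not established, and your proof does not go through as written. A minor additional point: Lemma~\ref{rotcatenoid} is used in the paper to show the vertical sheets are simply connected (no compact level curves), not for the slab confinement you describe.
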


The remaining part of this section is devoted to prove Theorem
\ref{th:main}.  We denote by $X:\Sigma\rightarrow N^3$ the minimal
immersion of $M=X(\Sigma)$, where $\Sigma$ is a topological
annulus. By hypothesis, $M$ is asymptotic to an admissible polygon at
infinity, denoted by~$\mathcal P$.

Let $\Omega\subset\hh$ be an open convex polygonal domain bounded by a
finite number of geodesics edges and such that the vertices of the
vertical projection of ${\cal P}$ are vertices of $\Omega$. By
possibly adding some more vertices, we can assume that $\Omega$
contains in its interior the vertical projection of $\partial M$ over
$\hh$. Using the maximum principle with vertical geodesic planes, we
get that $M\subset\Omega\times\rr$.

Up to a vertical translation, we can assume $\partial
M\subset\{t<0\}$.  Then for $t_0>0$, $M\cap\{t=t_0\}$ produces a set
of analytic curves in $\Sigma$. We will show that for $t_0>\pi$,
$X^{-1}(M\cap\{t=t_0\})$ does not contain a bounded component
$c$. Suppose it does. Then by the maximum principle using horizontal
slices, $c$ cannot bound a compact disk in $\Sigma$. Thus $c$ must be
in the homology class of $\partial\Sigma$, and $\partial\Sigma\cup c$
is the boundary of an annulus $A\subset\Sigma$, but that is not
possible by Lemma \ref{rotcatenoid}. Therefore,
$X^{-1}\left(M\cap\{t=t_0\}\right)$ cannot contain a compact
component. In particular, any component $U$ of
$X^{-1}\left(M\cap\{t>t_0\}\right)$ is simply connected and $X(U)$ is
unbounded.

Let $U$ be one such connected component and $E=X(U)$ be a vertical sheet. By
hypothesis, there exists a geodesic
$\alpha\times\{+1\}\subset{\cal P}$ such that $\partial_\infty
E\subset\partial_\infty (\alpha\times\rr)$.  We call $a_1,a_2$ the
endpoints of $\alpha$.

We now consider the minimal disks $S_h$, with $h>\pi$, introduced in
Section \ref{subsub:Sh}, associated to this geodesic $\alpha$. We
translate $S_h$ vertically upwards by an amount $t_0$.  Hence
\[
\partial_\infty S_h=(\eta \times\{t_0,t_0+h\})\cup
(\{a_1,a_2\}\times[t_0,t_0+h]),
\]
where $\eta$ is an arc in $\partial_\infty\hh$ with endpoints
$a_1,a_2$.  We recall that, when $h$ goes to $+\infty$, $S_h$
converges to the minimal graph $S_\infty$ defined on the domain of
$\hh$ bounded by $\alpha\cup\eta$ with boundary values $+\infty$ over
$\alpha$ and $t_0$ over $\eta$.  We also consider the reflected copy
$\widetilde{S}_\infty$ of $S_\infty$ across the vertical geodesic
plane $\alpha\times\rr$.

We call $P^{\epsilon}$ and $P^{-\epsilon}$ the two equidistant
vertical planes to the vertical geodesic plane $\alpha\times\rr$ at a
distance~$\epsilon$ and ${\cal R}_\epsilon$ the vertical slab bounded
by them.

\begin{claim}\label{cl:region}
  $E$ is contained in the region bounded by $S_\infty$,
  $\widetilde{S}_\infty$ and $\{t=t_0\}$. In particular, given any
  $\epsilon>0$, there exists $t_0$ sufficiently large so that $E$ is
  contained in ${\cal R}_\epsilon\cap\{t>t_0\}$.
\end{claim}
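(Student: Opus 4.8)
The plan is to trap $E$ between the two barriers $S_\infty$ and $\widetilde{S}_\infty$ by a maximum-principle argument, exploiting that $E$ and these barriers share asymptotic boundary only along $\alpha\times\{+1\}$ and the vertical edges. After the vertical translation by $t_0$, the asymptotic boundary of $S_\infty$ is $(\alpha\times\{+1\})\cup(\eta\times\{\phi(t_0)\})\cup(\{a_1,a_2\}\times[\phi(t_0),1])$, while by hypothesis $\partial_\infty E\subset\partial_\infty(\alpha\times\rr)=(\alpha\times\{\pm1\})\cup(\{a_1,a_2\}\times[-1,1])$. Hence $\partial_\infty E$ and $\partial_\infty S_\infty$ can only meet along $\alpha\times\{+1\}$ and the two vertical edges; in particular their interiors are disjoint over the ideal arc $\eta$. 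The same holds for $\widetilde{S}_\infty$ over the reflected arc $\widetilde{\eta}$.

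First I would show that $E$ lies on the $\alpha$-side of $S_\infty$. Consider a one-parameter family $\{\psi_s\}_{s\ge 0}$ of ambient horizontal isometries, with $\psi_0=\mathrm{id}$, that push $S_\infty$ away from $\alpha\times\rr$ towards the ideal arc $\eta$. For $s$ large, properness of $M$ together with the asymptotic-boundary analysis forces $\psi_s(S_\infty)\cap E=\emptyset$, since the projection of $\psi_s(S_\infty)$ concentrates near an interior ideal point of $\eta$, away from where $E$ accumulates (which is at $\alpha\times\{+1\}$ and the edges). Now decrease $s$ to $0$. For every $s>0$ the interiors of the asymptotic boundaries of $\psi_s(S_\infty)$ and $E$ remain disjoint, as $\psi_s(\alpha)\neq\alpha$; therefore a point of contact can neither escape to the asymptotic boundary nor occur in the interior, the latter being excluded by the maximum principle applied to two minimal surfaces with one lying locally on a single side. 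Thus $\psi_s(S_\infty)\cap E=\emptyset$ for all $s>0$, and letting $s\to 0^+$ shows that $E$ does not cross $S_\infty$. Since $\partial_\infty E\subset(\alpha\times\{+1\})\cup(\{a_1,a_2\}\times[\phi(t_0),1])$, near infinity $E$ sits in the narrow horn on the $\alpha$-side of $S_\infty$; being connected and not crossing $S_\infty$, all of $E$ stays on that side. Running the symmetric argument with $\widetilde{S}_\infty$, and recalling that $E\subset\{t>t_0\}$ by definition of a vertical sheet, we conclude that $E$ is contained in the region $W$ bounded by $S_\infty$, $\widetilde{S}_\infty$ and $\{t=t_0\}$, which is the first assertion.

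For the second assertion, recall that each horizontal slice $S_\infty\cap(\hh\times\{T\})$ is an equidistant curve to $\alpha$, whose distance $\rho(T)$ to $\alpha$ tends to $0$ as $T\to+\infty$, and likewise for $\widetilde{S}_\infty$. Hence the slice $W\cap\{t=T\}$ is the symmetric neighborhood $\{\mathrm{dist}(\cdot,\alpha)<\rho(T)\}$, whose width tends to $0$. Given $\epsilon>0$, choosing a cutting height large enough that $\rho(T)<\epsilon$ for all larger $T$ makes the corresponding upper part of $W$ lie in $\mathcal{R}_\epsilon$; since any vertical sheet above that height is contained in $W$ by the first part, it is contained in $\mathcal{R}_\epsilon\cap\{t>t_0\}$.

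The main obstacle is the maximum-principle step with the non-compact barrier $S_\infty$: one must rule out that, as $S_\infty$ is slid back into place, a sequence of contact points with $E$ escapes to the common asymptotic boundary along $\alpha\times\{+1\}$ and the vertical edges, where the two surfaces are tangent at infinity. This is precisely where properness of $M$ and the strict separation of the interiors of the two asymptotic boundaries for $s>0$ are essential.
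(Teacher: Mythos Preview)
Your strategy is the same as the paper's: slide a barrier from the $\eta$-side of $\alpha\times\rr$ back to its original position and invoke the maximum principle. The difference is that you slide $S_\infty$ itself, whereas the paper slides the finite-height disks $S_h$ (for each fixed $h>\pi$) and only afterwards lets $h\to\infty$.

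This difference is exactly what resolves the ``main obstacle'' you flag at the end but do not actually discharge. The translated disk $\varphi_s(S_h)$ lives in the slab $\{t_0<t<t_0+h\}$, so its asymptotic boundary is
\[
(\varphi_s(\eta)\times\{t_0,t_0+h\})\cup(\{\varphi_s(a_1),\varphi_s(a_2)\}\times[t_0,t_0+h]),
\]
which for every $s\in(0,s_0]$ is \emph{completely} disjoint from $\partial E\cup\partial_\infty E\subset\{t=t_0\}\cup\partial_\infty(\alpha\times\rr)$: the horizontal arcs sit at the finite heights $t_0$ and $t_0+h$, not at $+1$, and the vertical segments sit over $\varphi_s(a_i)\in\eta$, not over $a_1,a_2$. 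Hence a first interior contact must occur if the surfaces ever meet, and the maximum principle rules it out with no escape-to-infinity issue to worry about. Having $S_h\cap E=\emptyset$ for every $h>\pi$, one passes to the limit $h\to\infty$ to conclude that $E$ lies on the $\alpha$-side of $S_\infty$.

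Your direct approach with $S_\infty$ can in fact be made to work, since for $s>0$ the asymptotic boundaries of $\psi_s(S_\infty)$ and $E$ are still disjoint (you understate this by saying only their ``interiors'' are disjoint), and the initial disjointness for large $s$ follows because $\overline{E}$ is compact in the product compactification and misses $\{b\}\times[-1,1]$ for $b$ the attracting ideal point of $\psi_s$. But you leave these points as assertions; the paper's use of $S_h$ first is precisely the clean device that turns your acknowledged obstacle into a non-issue.
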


\begin{proof} 
  Let $\gamma$ be a geodesic orthogonal to $\alpha$ and consider the
  horizontal hyperbolic translations $(\varphi_s)_{s\in\rr}$ along
  $\gamma$ (being $\varphi_0$ the identity). We assume that, for any
  $s>0$, $\varphi_s(\alpha)$ has its endpoints in $\eta$.

  We fix $h>\pi$.  Since $\partial_\infty E\subset\partial_\infty
  (\alpha\times\rr)$, there exists $s_0>0$ large enough so that
  $\varphi_{s_0}(S_h)$ is disjoint from~$E$. (We observe that
  $\varphi_{s_0}(S_h)\cup\partial_\infty \varphi_{s_0}(S_h)$ is disjoint
  from $\partial E\cup\partial_\infty E$, which is contained in
  $\{t=t_0\}\cup\partial_\infty(\alpha\times\rr)$).  Now letting $s$
  decrease from $s_0$ to $0$ and using the maximum principle, we
  conclude that $S_h$ does not intersect $E$.  This holds for any
  $h>\pi$. Taking $h\to+\infty$, we conclude that $E$ lies
  below~$S_\infty$.  We finish the proof of Claim \ref{cl:region}
  following a symmetric argument using $\tilde S_\infty$.
\end{proof}

We consider the compact annulus $A_0$ presented in Section 2.3, for
some fixed $d>0$, translated so that $\alpha_0=\alpha$. We recall that
$A_0\cap(\alpha\times\rr)$ is a convex curve.  We take $\epsilon <
\ell /2$, half of the distance from $\alpha$ to the projection of
$\partial C_\ell$ onto $\hh$, for any annulus $C_\ell$ associated to
$A_0$ and~$\rho$ (we recall that $\rho$ is the radius of the extrinsic
balls where the small annuli $C_\ell$ are contained). We will fix
$\epsilon >0$ small enough in regards of $\ell$.  By Claim
\ref{cl:region}, we can assume that $E$ is contained in ${\cal
  R}_\epsilon$ for such a choice of $\epsilon$.

\begin{figure}[h]
  \centering
  \includegraphics[height=4cm]{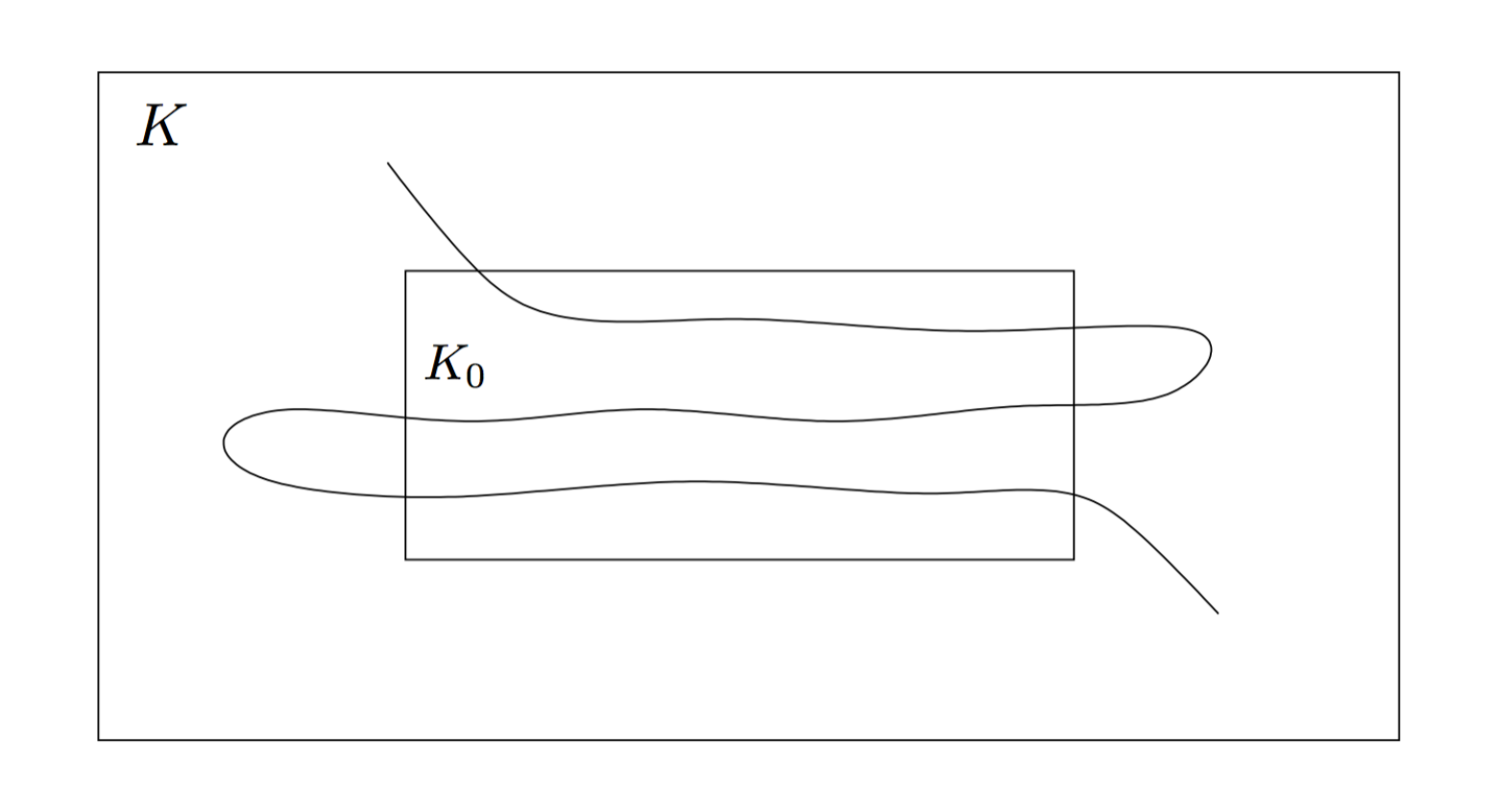}
  \caption{Compacts $K_0\subset K.$}
  \label{figure6}
\end{figure}

By properness, we can take a compact cylinder $K_0\subset {\cal
  R}_\epsilon\cap \{t\geq t_0\}$ such that $X^{-1}\left(K_0\cap
E\right)$ contains a finite number of connected components. Since any
two of these connected components can be joined by a compact arc in
$U$ and there are finitely many of those connected components, we can
find a compact set $K\subset \{t\geq t_0\}$ containing $K_0$ so that
any two points of $X^{-1}\left(K_0\cap E\right)$ can be joined by a
curve contained in $X^{-1}\left(K\cap E\right)$ (see Figure
\ref{figure6}). Moreover, if we take $K_0$ so that it contains
$C_\ell\cap {\cal R}_\epsilon$, for some small annulus $C_\ell$, then
every component of $M \cap \{ t \geq t_0\}$ has a non empty
intersection with $K_0$. If not, we can consider this annulus $C_\ell$
with $C_\ell \cap {\cal R}_\epsilon \subset K_0$ and $\partial C_\ell
\cap R_\epsilon = \emptyset$. We could move isometrically $C_\ell$
keeping its boundary outside ${\cal R}_\epsilon$ to reach any point in
${\cal R}_\epsilon$.  There will be a first point of contact with the
component that does not meet $K_0$, a contradiction with the maximum
principle. In particular we obtain:

\begin{claim}\label{cl:finite}
The number of vertical sheets of $M \cap \{ t \geq t_0\}$ is finite.
\end{claim}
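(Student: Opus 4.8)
The plan is to prove finiteness of vertical sheets by constructing, for each vertical sheet $E$ of $M\cap\{t\geq t_0\}$, a small minimal annulus $C_\ell$ of the type introduced in Section~\ref{sec-annulus} that is forced to intersect $E$ inside the fixed compact set $K_0$. Since $K_0$ is compact and the projections of the $C_\ell$ have mutually disjoint interiors when the corresponding waist points are separated, only finitely many disjoint copies of $C_\ell$ can fit inside $K_0$, which bounds the number of vertical sheets.

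First I would recall from the paragraph preceding the claim that $K_0$ is chosen so that it contains $C_\ell\cap\mathcal R_\epsilon$ for some small annulus $C_\ell$, and so that \emph{every} component of $M\cap\{t\geq t_0\}$ meets $K_0$. The argument there, via the Dragging Lemma and the maximum principle, shows that any vertical sheet $E$ must have nonempty intersection with $K_0$: if some vertical sheet $E$ avoided $K_0$, one could slide an isometric copy of $C_\ell$ (keeping $\partial C_\ell$ outside $\mathcal R_\epsilon$) until it first touches $E$, contradicting the maximum principle. Thus I would first establish that each vertical sheet contributes at least one point of $E\cap K_0$.

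Next I would argue that distinct vertical sheets occupy geometrically separated portions of $K_0$. By Claim~\ref{cl:region}, after choosing $\epsilon$ small and $t_0$ large, each vertical sheet $E$ is trapped in $\mathcal R_\epsilon\cap\{t>t_0\}$ and is close (in the Hausdorff sense) to a sub-piece of the vertical geodesic plane $\alpha\times\rr$; its normal is nearly horizontal. Using the family of small minimal annuli $C_\ell$, which can be centered at any point $q$ on the waist circle of $A_{\delta/2}$ and which lie in balls of radius $\rho$, I would cover $K_0\cap(\alpha\times\rr)$ by finitely many such balls $B_\rho(q)$; each vertical sheet passing through $K_0$ must be caught by at least one $C_\ell$, and the maximum-principle/dragging argument prevents two essentially distinct sheets from being threaded by the same $C_\ell$ without one of them being a subdomain of the annulus. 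Since $K_0$ is compact, only finitely many disjoint balls $B_\rho(q)$ fit, and hence only finitely many vertical sheets can occur.

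The main obstacle I expect is the \emph{separation} step: ensuring that two genuinely different vertical sheets cannot accumulate or nest within the same $C_\ell$-ball, given that $M$ is only immersed (not embedded) and the sheets need not be disjoint. The resolution is that each vertical sheet, being a connected component of $X^{-1}(M\cap\{t>t_0\})$ that is simply connected and unbounded (established earlier in the section), is forced by the $S_\infty$ barriers of Claim~\ref{cl:region} to stay within a definite slab, and the maximum principle applied to the dragged $C_\ell$ produces a point of contact for each sheet independently; a compactness/finiteness count on $K_0$ then closes the argument. I would phrase the conclusion as: since every vertical sheet meets the compact set $K_0$ and the local barriers $C_\ell$ separate them into finitely many disjoint regions of $K_0$, the number of vertical sheets of $M\cap\{t\geq t_0\}$ is finite.
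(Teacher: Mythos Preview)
Your first step is fine and matches the paper: every vertical sheet must meet the compact $K_0$, by the $C_\ell$/maximum principle argument you cite.

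The gap is in your second step. You try to bound the number of vertical sheets by a geometric separation argument in the \emph{target}: cover $K_0\cap(\alpha\times\rr)$ by finitely many balls $B_\rho(q)$, and argue that distinct sheets must occupy distinct balls. But, as you yourself flag, $M$ is only immersed, so two (or infinitely many) distinct vertical sheets can have images that overlap arbitrarily inside $K_0$; nothing about the barriers $S_\infty$, the slab $\mathcal R_\epsilon$, or the annuli $C_\ell$ prevents this. Your proposed resolution (``the maximum principle applied to the dragged $C_\ell$ produces a point of contact for each sheet independently; a compactness/finiteness count on $K_0$ then closes the argument'') does not actually separate anything: a single $C_\ell$ can intersect many sheets at once, and the ball-covering count only bounds the number of balls, not the number of sheets passing through each ball.

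What you are missing is the properness of $X$ applied in the \emph{domain} $\Sigma$. Since $X$ is proper and $K_0$ is compact, $X^{-1}(K_0)$ is compact in $\Sigma$, hence has finitely many connected components. Vertical sheets are, by definition, images of \emph{disjoint} connected open sets $U\subset\Sigma$; each such $U$ meets $X^{-1}(K_0)$ (by your first step) and therefore contains at least one of these finitely many components. This immediately bounds the number of vertical sheets. This is exactly the paper's argument, stated in the paragraph just before the claim (``By properness, \dots $X^{-1}(K_0\cap E)$ contains a finite number of connected components''). Replace your separation step by this one-line properness argument and the proof is complete.
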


\begin{figure}[h]
  \centering
  \includegraphics[height=8cm]{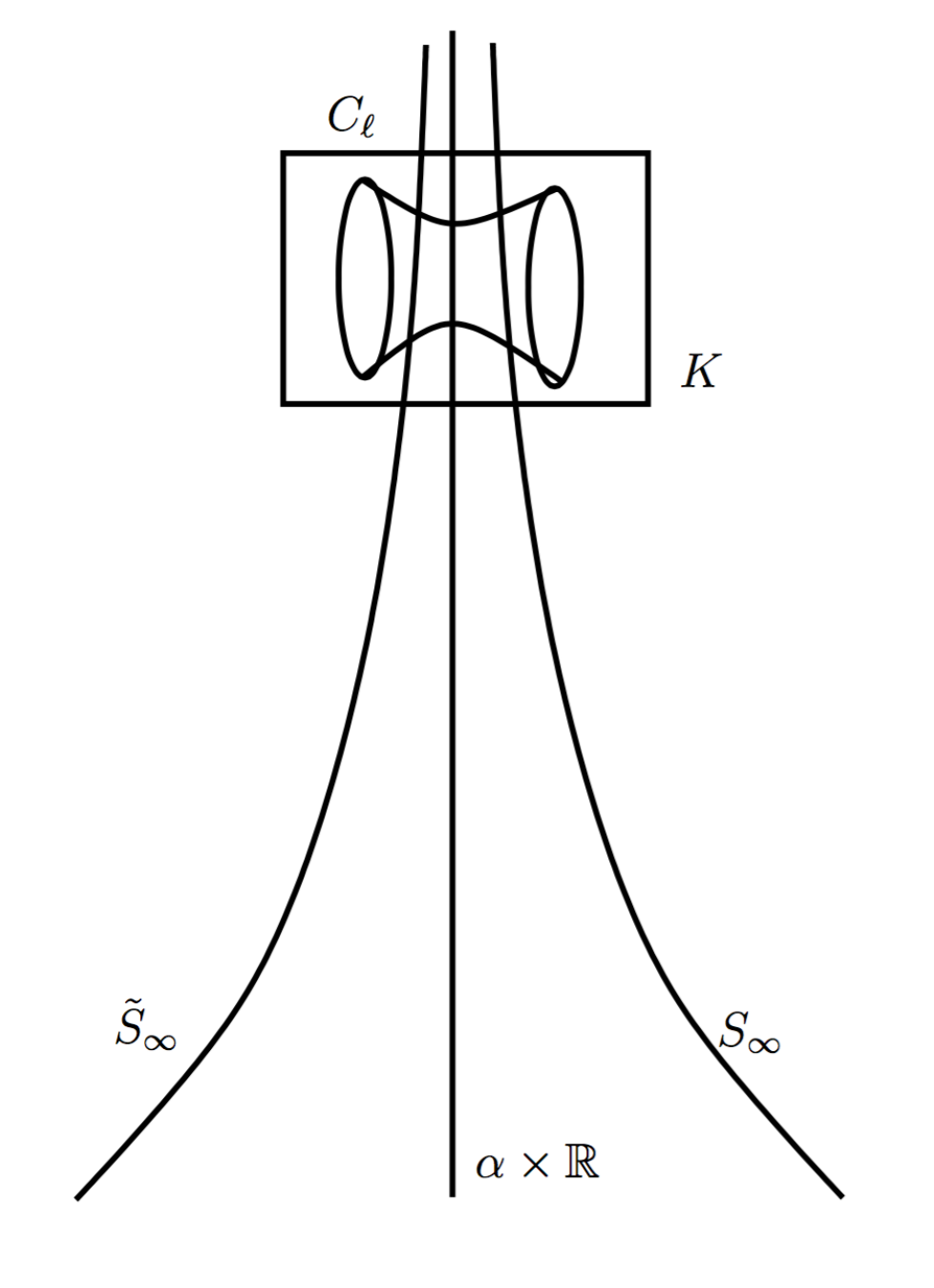}
  \caption{The slab, the compact $K$ and the annulus $C_\ell$.}
  \label{figureC}
\end{figure}

We take $t_1>t_0$ such that $K$ is contained in the slab $\{t_0\leq
t\leq t_1\}$ and let $t_2 > t_1 +2 \rho$. Now let us consider $U'$ a
connected component of $X^{-1}\left(E\cap \{t>t_2\}\right)$, and
$E'=X(U')$. Arguing as above we get that $U'$ is simply-connected and
$E'$ is unbounded.  And we can find two compact sets $K_0',K'$
contained in $\{t\geq t_2\}$ such that $K_0'\cap E'\neq\emptyset$ and
any two points of $X^{-1}\left(K_0'\cap E'\right)$ can be joined by a
curve in $X^{-1}\left(K'\cap E'\right)$. Let us consider $t_3$ such
that $K'$ is contained in the slab $\{t_2\leq t\leq t_3\}$ and take
$t_4 > t_3 +2 \rho$ (see Figure \ref{lignet}).

\medskip
\begin{figure}[h]
  \centering
  \includegraphics[height=8cm]{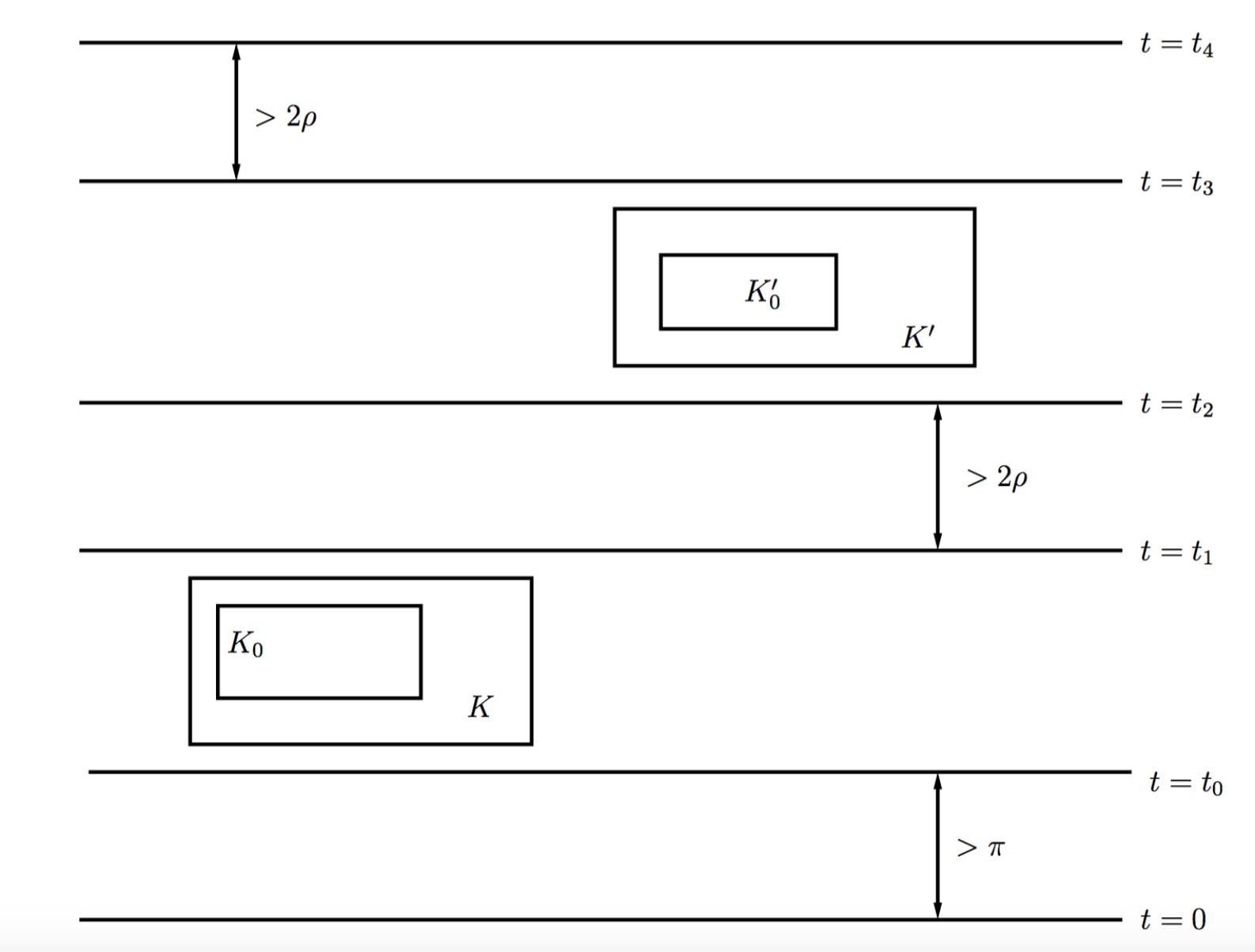}
  \caption{Compacts $K_0\subset K$ and $K'_0 \subset K'$.}
  \label{lignet}
\end{figure}

Consider $Y$ the unit vector field of $\hh$ normal to the foliation of
$\hh$ by equidistant curves to $\alpha$ (and then tangent to the
geodesics intersecting $\alpha$ orthogonally). This vector field $Y$
lifts to a horizontal field in $N^3$, also called $Y$.

\begin{claim}
 For $t'>t_4$ large, $E'\cap \{t>t'\}$ is a horizontal geodesic graph
 over a domain of $\alpha \times \rr$, i.e., $E'\cap \{t>t'\}$ is
 transversal to the horizontal vector field~$Y.$
\label{cl:graph}
\end{claim}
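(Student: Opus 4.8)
The plan is to argue by contradiction, after first reformulating the statement. By Definition~\ref{geodgraph}, saying that $E'\cap\{t>t'\}$ is a horizontal geodesic graph over a domain of $\alpha\times\rr$ is the same as saying it is transverse to the foliation by horizontal geodesics meeting $\alpha$ orthogonally, i.e. transverse to $Y$, since $Y$ is by definition tangent to that foliation. So suppose the conclusion fails: there is a sequence of points $p_n\in E'$ with $t(p_n)\to+\infty$ at which $Y(p_n)$ is tangent to $E'$, equivalently the unit normal $N(p_n)$ is orthogonal to $Y(p_n)$. It suffices to reach a contradiction for one such $p=p_n$ with $t(p)$ large.

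First I would place a tangent barrier using the annulus $A_0$ of Section~\ref{sec-annulus}. Since $E'\subset\mathcal{R}_\epsilon$ by Claim~\ref{cl:region} and $N(p)\perp Y(p)$, the isometry discussion following Lemma~\ref{lema} produces a combination $\mathcal I$ of horizontal and vertical translations so that $A_p:=\mathcal I(A_0)$ passes through $p$ with unit normal exactly $N(p)$ and with $\partial A_p$ outside $\mathcal R_\epsilon$; moreover $p$ lies on the convex curve $A_p\cap(\alpha\times\rr)$, along which $Y$ is tangent to $A_p$ and the normal sweeps all directions orthogonal to $Y$. Thus $A_p$ and $E'$ are two minimal surfaces tangent at the interior point $p$ with the same oriented normal, so by the maximum principle the intersection $A_p\cap E'$ is, near $p$, a union of at least two analytic arcs crossing at $p$; in particular the contact is genuine and two-sided. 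Here the hypothesis $t'>t_4$ together with the height gaps $t_2>t_1+2\rho$, $t_4>t_3+2\rho$ and $3\rho<\mathrm{dist}(A_0,\xi)$ is exactly what guarantees that $A_p$, placed this high, does not run into the compact pieces $K,K'$ nor into $\partial M\subset\{t<0\}$.

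The contradiction then comes from the fact that $E'$ is trapped in the thin slab while $A_p$ is not. Since $\epsilon<d$ (indeed $\epsilon<\ell/2$), the portion $A_p\cap\mathcal R_\epsilon$ is an annular band around its waist curve, whose two ends lie on $\partial\mathcal R_\epsilon=P^\epsilon\cup P^{-\epsilon}$, while $\partial A_p$ sits outside the slab. Any arc of $A_p\cap E'$ issuing from $p$ stays in $E'\subset\mathcal R_\epsilon$, hence remains inside this band and cannot reach its ends; being analytic it must therefore close up into a loop. Because $U'$ is simply connected, this loop bounds a disk $D\subset U'$, and a small such loop also bounds a disk in $A_p$; comparing the two minimal disks as graphs over a common domain transverse to the horocylinder foliation, the difference of the two solutions has an interior sign and vanishes along the loop, contradicting the maximum principle. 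Alternatively, the same conclusion is reached with the Dragging Lemma (Lemma~\ref{DL}) by moving $A_p$ through the family $\mathcal I_s(A_0)$ of vertical translates: its boundary stays outside $\mathcal R_\epsilon$, the intersection band descends monotonically, and the dragged $C^1$ path in $U'$ would be forced down to $\partial U'=E'\cap\{t=t_2\}$ while the waist is still far above $t_2$, which is impossible. Either way, tangency of $Y$ to $E'$ at a point with $t$ large is ruled out, proving the claim.

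The step I expect to be the main obstacle is guaranteeing that the moving (or merely placed) barrier never meets $M$ along its boundary --- not only $E'$ but any other sheet of $M$ --- which is precisely what the slab containment of Claim~\ref{cl:region} and the separation constants $\rho,\ell,\epsilon,d$ of Section~\ref{sec-annulus} are arranged to ensure. A secondary but essential subtlety is the $\psl$ case, where $A_0$ is only approximately symmetric in the Euclidean sense: there one must invoke Lemma~\ref{pslannuli} to keep the tangency curve $\gamma$ at horizontal distance at least $2d$ from $\partial A_0$, so that $\partial\mathcal I(A_0)$ stays clear of the slab when $\mathcal I(A_0)$ is made to pass through an arbitrary point of $\mathcal R_\epsilon$.
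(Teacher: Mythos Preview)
Your setup is correct: placing an isometric copy $A_p$ of $A_0$ tangent to $E'$ at a hypothetical point $p$ where $Y(p)\in T_pE'$, with $\partial A_p$ outside $\mathcal R_\epsilon$, does yield at least two analytic arcs of $X^{-1}(A_p)$ crossing at $z$, and these arcs must close up since neither boundary is reached. The gap is in deriving a global contradiction from this. First, the closed curve $\gamma\subset A_p\cap E'$ through $p$ need not bound a disk in $A_p$: it lies in the annular band $A_p\cap\mathcal R_\epsilon$ and may well be homotopic to the core circle. Second, even granting a disk $D_A\subset A_p$ with $\partial D_A=\gamma=\partial X(D)$, two minimal disks with common boundary are not forced to coincide; your appeal to ``graphs over a common domain transverse to the horocylinder foliation'' is circular, since the whole point is that $E'$ is not yet known to be any kind of horizontal graph. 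Third, your alternative Dragging argument does not use the tangency at all: translating $A_p$ vertically downward keeps $\partial A_p\cap E'=\emptyset$, so the dragged path simply persists until $A_p$ meets $\partial E'\subset\{t=t_2\}$, and there the lemma legitimately stops --- no contradiction. Notice this argument would apply equally to any (non-tangent) high placement of $A_0$, which certainly does meet $E'$.

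The paper's proof extracts the contradiction quite differently. From the tangency one gets two \emph{local} sectors $D_1,D_2$ at $z$ mapping into the compact region $A_p^-$, and Lemma~\ref{lema} is used to show they lie in \emph{distinct} global components of $X^{-1}(E'\cap A_p^-)$. One then applies the Dragging Lemma, not to $A_p$ itself, but to the small annuli $C_\ell$: first to drag points of $X(D_1),X(D_2)$ to a ball $B_0$ near the axis of $A_p$, and then to drag two pairs of arcs $\Gamma_i,\Gamma_i'$ from there down to the compacts $K_0$ and $K_0'$ on opposite sides of a vertical plane $P$ through $A_p^-$. Closing these up with arcs in $K,K'$ produces a Jordan curve $\widetilde\Gamma$ in the larger simply connected domain $U$ bounding a disk $D$. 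A further maximum-principle sweep with $C_\ell$ shows $X(D)$ avoids $P$ except inside $A_p^-$, and the contradiction is a \emph{parity} one: $\Gamma_1\cup\Gamma_1'$ crosses $P$ an odd number of times by construction, but every arc of $D\cap X^{-1}(P)$ has both endpoints on $\Gamma_1\cup\Gamma_1'$ (being trapped in the single component $D_1$), forcing an even count. This two-level structure $(K,K')$, $(U,U')$ and the parity argument are the missing ingredients.
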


\begin{proof}
  Take $T>0$ so that the annulus $A_0$ is contained in a horizontal
  slab $\{ |t|Ê\leq T/2\}$.  We prove Claim \ref{cl:graph} for $t'=T +
  t_4$. Suppose by contradiction that there exists a point $z\in U'$
  such that $q=X(z)\in \{t> T + t_4 \})$ and $Y(q)\in T_q E'.$

  Choosing $\epsilon \leq \ell /2$ small enough, for any point $p\in
  A_0\cap(\gamma_0\times\rr)$ there exists an isometry $\psi_p$ of
  $\hr$ such that $\psi_p(p)=q,$ $\psi_{p}(\partial A_0)\cap {\cal
    R}_\epsilon=\emptyset$ and $\psi_{p}(\partial C_\ell)\cap {\cal
    R}_\epsilon=\emptyset$.  Hence, since the normal vector to
  $A_0$ along $A_0\cap(\gamma_0\times\rr)$ takes all directions in the
  plane $\gamma_0\times\rr$, we can find a point $p_0\in
  A_0\cap(\gamma_0\times\rr)$ such that $\psi_{p_0}(p_0)=q,$
  $\psi_{p_0}(\partial A_0)\cap {\cal R}_\epsilon=\emptyset$ and
  $\psi_{p_0}(A_0)$ is tangent to $E'$ at $q.$ In order to simplify
  the notation, we still denote by $A_0$ the annulus
  $\psi_{p_0}(A_0),$ and by $P^d$ and $P^{-d}$ the vertical planes
  containing its boundary curves $\eta$ and $\eta_{-},$
  respectively. We remark that $A_0$ is no more symmetric with respect
  to $\alpha\times\rr$ but we still denote by $\gamma_0\times\rr$ its
  plane of symmetry, so $q\in A_0\cap(\gamma_0\times\rr)$. Finally, we
  remark that $A_0\subset \{t>t_4> t_3 +2 \rho\}$, since $q\in\{t> T +
  t_4\})$.

  Since $A_0$ and $E'$ are tangent at $q$, we have that
  $X^{-1}\left(A_0\cap E'\right)$ consists, locally around $z$, of at
  least two curves passing through $z$ in an equiangular way.  Since
  neither the boundary of $A_0$ intersects $E'$ nor the boundary of
  $E'$ intersects $A_0$ and $U'$ is simply connected, we have that
  these curves bound at least two local connected components $D_1$ and
  $D_2$ in~$U'$.  These local components $D_1$ and $D_2$ are in
  distinct components of $X^{-1}(E' \cap A_0^-)$.  In fact, suppose
  this is not true, so we can find a path $\alpha_0$ in $U'$ with
  $X(\alpha_0) \subset A_0^-$, joining points $x \in D_1$ and $y \in
  D_2 $. Now join $x$ to $y$ by a local path $\beta_0$ in $U'$ going
  through $p$ with $\beta_0 \subset A_0^-$ except at $p$. Let $\Gamma
  = \alpha_0 \cup \beta_0$, we have $X(\Gamma) \subset A_0^-$. Since
  $U'$ is simply connected, $\Gamma$ bounds a disk $D$ in $U'$, but by
  construction $X(D) \cap A_0 ^+ \neq \emptyset$, a contradiction with
  Lemma \ref{lema}. Hence $D_1$ and $D_2$ are contained in two
  distinct components of $U'\backslash X^{-1} (A_0 \cap E')$ such that
  $X(D_1)$ and $X(D_2)$ are contained in $E' \cap A_0^-$ (see Figure
  \ref{lignetD}).

 We observe that $D_1$ and $D_2$ are disjoint in $U'$, even if their
 images by $X$ intersect each other (as the surface $M$ is not
 necessarily embedded).

\begin{figure}[h]
  \centering
  \includegraphics[height=6cm]{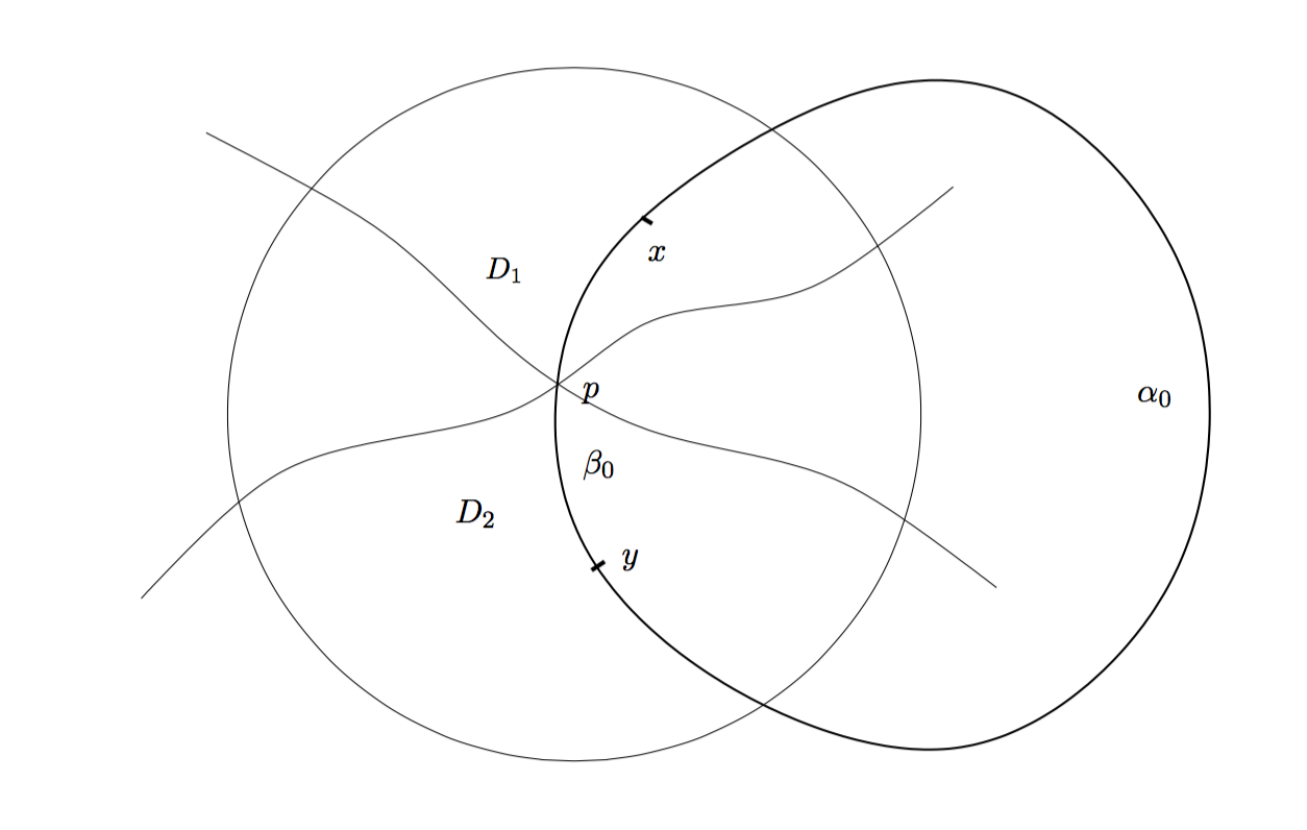}
  \caption{Components $D_1$ and $D_2$}
  \label{lignetD}
\end{figure}

  Let $P$ be a vertical geodesic plane orthogonal to
  $\gamma_0\times\rr$ such that $P$ divides the intersection curve
  $A_{-\delta}\cap(\gamma_0\times\rr)$ in two components. Denote by
  $P^{-}$ and $P^{+}$ the two halfspaces determined by~$P$, and by
  $P^{-\rho}, P^{\rho}$ the two equidistant planes to $P$ at a
  distance~$2\rho$ from~$P$. Let us denote by $[P^{-\rho}, P^{\rho}]$
  the slab bounded by $P^{-\rho}$ and $P^{\rho}$.  We consider the
  ball $B_0=B_{q_1}(\rho)$ in $A_0^-$ of radius $\rho$ centered at the
  point $q_1$ of the ``axis" $\xi$ of the annulus $A_0$ (see Section
  2.3). This ball $B_0$ contains an annulus $C_{\ell_0}$ with boundary
  outside ${\cal R}_\epsilon.$ Notice that $B_{q_1}(2\rho)\cap
  \mbox{Tub}^{-}(A_0)=\emptyset$ and then $B_{q_1}(4\rho)\subset
  A_0^-$. $B_0$ is contained in $[P^{-\rho}, P^{\rho}]$ with its
  center at height $t=s_0$.  For any $j=1,2$, we are going to prove
  that $X(D_j)$ contains a point $z_j$ inside $C_{\ell_0} \subset
  B_0$. To see that we apply the maximum principle.  We know that
  $X(D_j)$ intersects each annulus $A_s$ in Tub$^{-}(A_0)$; then
  $X(D_j)$ intersects a small annulus $C_{\ell_j}$ in Tub$^{-}(A_0)$
  at a point $w_j$.  By our choice of $\epsilon$, the boundary of
  $C_{\ell_j}$ does not intersect $X(D_j)$ when translated in the
  direction of a vector in $\alpha\times\rr$.  Using the Dragging
  Lemma with $D_j\subset U'$ and translated copies of $C_{\ell_j}$, we
  find a curve in $X(D_j)$ going from the point $w_j$ to a point $z_j
  \in C_{\ell_0}$, as desired. See Figure~\ref{mexicain}.
  
\begin{figure}[h]
  \centering
  \includegraphics[height=9cm]{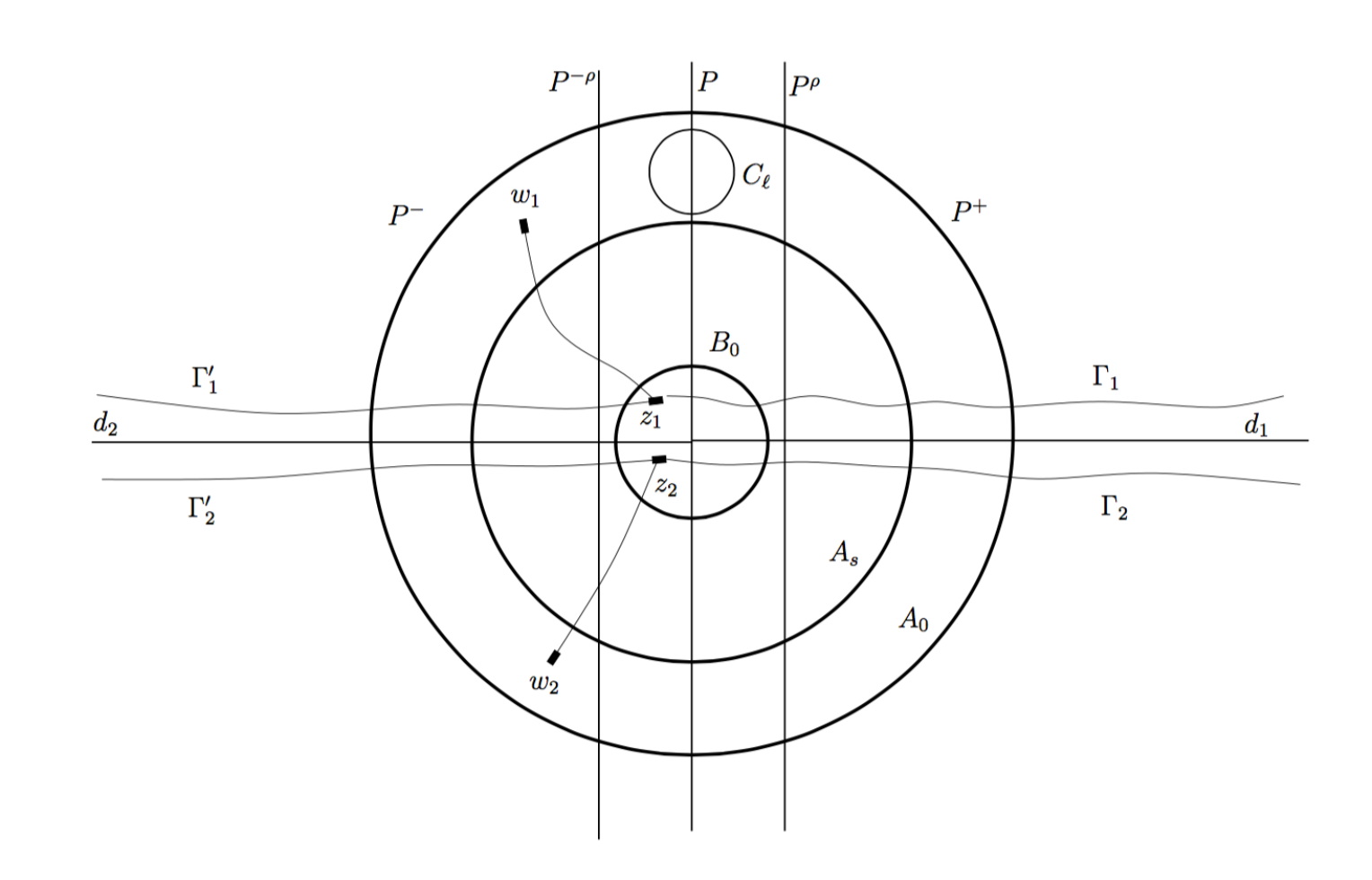}
  \caption{A vertical section of $A_0, C_\ell$ and $[P^{-2\rho},
      P^{2\rho}].$}
  \label{mexicain}
\end{figure}

Using again the Dragging Lemma we consider horizontal translations of
$C_{\ell_0},$ denoted by $C_{\ell} (t)$, along a horizontal geodesic
in $P^+,$ going very far from the slab
$[P^{-\rho},P^{\rho}],$ and then going vertically downwards into the
compact $K_0'$ (applying as well horizontal translations, if needed,
in order to get to the compact $K_0'$ but with $\partial C_{\ell} (t)$
never touching the slab).  Along this movement we follow a connected
arc $\Gamma'_1 (t) \in U'$ with $X(\Gamma'_1 (t)) \in C_{\ell} (t)$
starting at $z_1$ and ending into $K_0'$, with $X(\Gamma' _1) \cap
A_0^- \subset X(D_1)$.  We denote by $\Gamma' _2 (t) \subset U'$ a
similar arc starting at $z_2$ and ending into $K_0'$ with $X(\Gamma'
_2) \cap A_0^- \subset X(D_2)$. Notice we cannot connect $\Gamma'_1$
and $\Gamma'_2$ before they leave $A_0^-$ (since $D_1$ and $D_2$ are
in two disjoint components). If they meet in $U'$ before ending into
$K_0'$, we stop the construction at the first point of intersection.
If not we connect $\Gamma'_1$ and $\Gamma'_2$ by an arc $\alpha'$
contained in the compact set $X^{-1}(K')$, since they have endpoint in
$K_0'$ (see Figure \ref{mexicain}). 

We apply the same construction in $P^-$. We move the annulus
$C_{\ell}$ along first a horizontal geodesic into $P^-$, next we move
the annulus vertically downwards (and horizontally, if necessary) to
end into $K_0$. We construct an arc $\Gamma _1$ from $z_1$ to $K_0$
and an arc $\Gamma_2$ from $z_2$ into $K_0$. We connect eventually
$\Gamma_1$ and $\Gamma_2$ by an arc $\alpha$ in $X^{-1}(K)$.

 \begin{figure}[h]
    \centering
    \includegraphics[height=12cm]{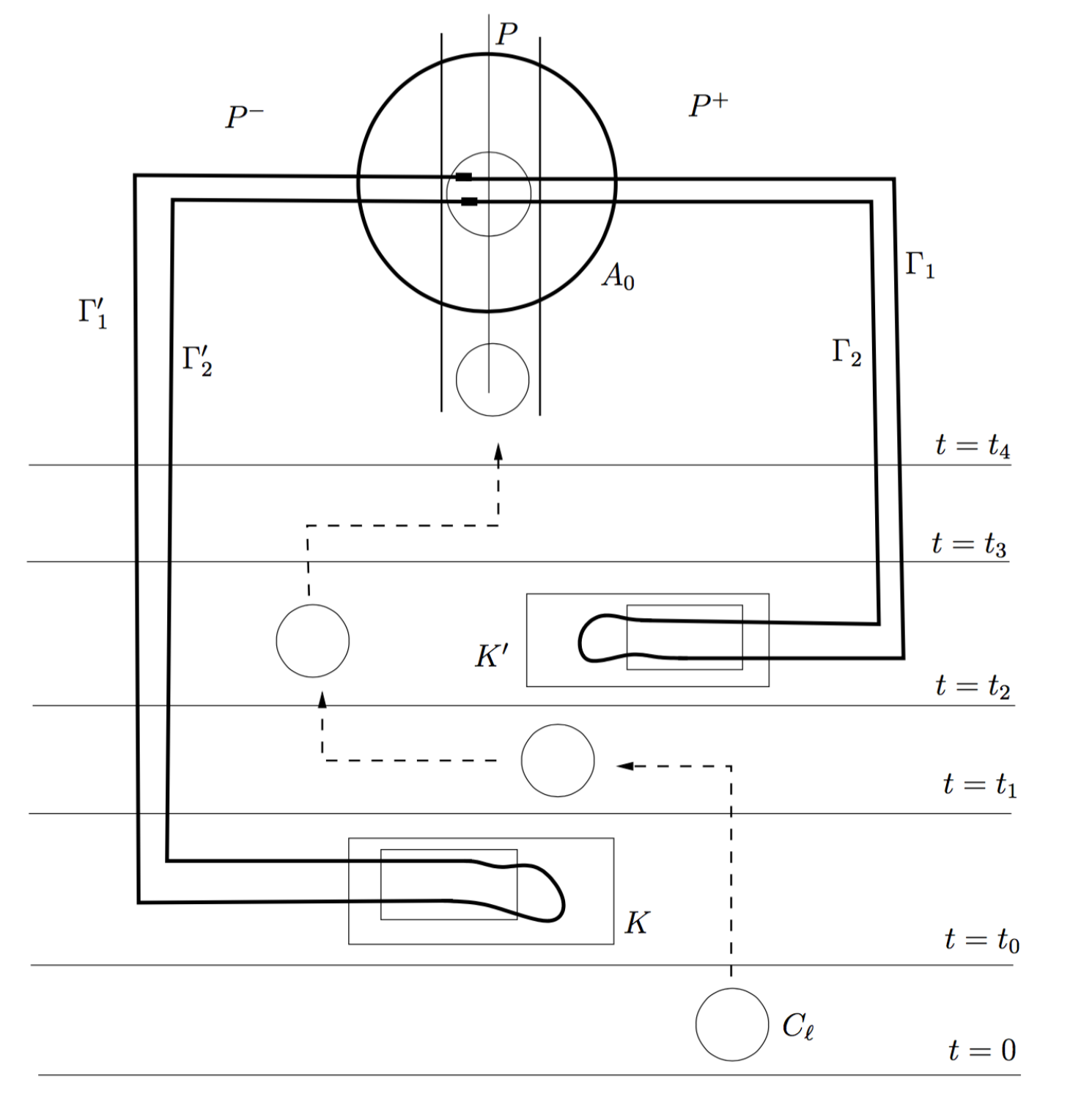}
    \caption{Curve $\widetilde\Gamma$.}
    \label{figure7}
  \end{figure}

  Therefore $\widetilde{\Gamma}=\Gamma_1\cup \Gamma_2\cup \alpha \cup
   \Gamma'_1\cup \Gamma_2' \cap \alpha '$ is
  a simple closed curve in $U$ (see Figure \ref{figure7}). We call
  $D$ the compact disk in $U$ bounded by $\widetilde{\Gamma}$.
%%%%%%%%%%%%%%%%%%%%%%%%%%%%%%%%%%%%%%%%%%%%%%% 
  Now we consider a small annulus $C_{\ell}\subset \mbox{Tub}^{+}(A_0)$
  lying under~$A_0$ and such that $C_{\ell}\cap P\neq \emptyset$. For
  instance, we can take $C_{\ell}$ contained in the ball of radius $\rho$
  centered at a point in $P\cap A_{\delta/2}\cap(\gamma_0\times\rr).$

  We move isometrically the small annulus $C_{\ell}$ to $\{t<t_0\}$
  keeping its boundary outside ${\cal R}_\epsilon$. We consider a
  family of annuli obtained by continuously translating back
  $C_{\ell}$ from $\{t<t_0\}$ to its original position in
  $\mbox{Tub}^{+}(A_0) \cap [P^{-\rho},P^{\rho}]$. By the choice of
  $\widetilde{\Gamma}$, we can assume that none of these translated
  annuli intersects $X(\widetilde{\Gamma})$ and then, by the maximum
  principle, they do not intersect the minimal disk $X(D)$ either
  (observe that $X(D)\subset{\cal R}_\epsilon$, so $X(D)$ cannot
  intersect the boundary of the annuli). Translating slightly
  $C_{\ell}$ inside $A_0^-$ and using the maximum principle, we get
  that $X(D)$ has no points in $P\cap \{t_4\leq t\leq s_0-3\rho\}.$
  Using the maximum
  principle with the anulli $C_\ell$ coming from above $A_0$ and going
  downwards inside $A_0^-$ in the region
  $[P^{-\rho},P^{\rho}]\cap\{t\geq s_0+3\rho\},$ we also conclude that
  $X(D)$ has no points in $P\cap \{ t\geq s_0+ 3\rho\}.$
  Possibly slightly translating $P$, we can suppose that $X(D)\cap P$ is
  transversal.

  Since $X(\Gamma_1 \cup \Gamma_1')$ is a curve which crosses $P$
  transversally going from $P^-$ to $P^+$ then the number of points in
  $(\Gamma_1\cup \Gamma_1' )\cap X^{-1}(P)$ is odd.  Now we consider a
  curve $\beta$ in $D\cap X^{-1}(P)$ starting at a point $p\in
  \Gamma_1\cup \Gamma_1'$. We observe that by the maximum principle
  $\beta$ can not be a closed curve, so it does not finish at $p$.
  Since $X(D)$ does not intersect $P\cap (\{t_4\leq t\leq
  s_0-3\rho\}\cup\{t\geq s_0+3\rho\})$, we get that $\beta$ is
  contained in $X^{-1}(A_0^{-}\cap P).$ Hence $\beta$ is a curve
  entirely contained in the component $D_1$ and finishes at a
  different point in $\Gamma_1 \cup \Gamma_1'$ concluding that the
  number of points in the intersection $P \cap (\Gamma_1 \cup
  \Gamma_1')$ is even, a contradiction.  Therefore, we conclude that
  $E'\cap \{t\geq t'=T +t_4 \}$ is necessarily transversal to the
  horizontal vector field $Y.$
\end{proof}

Let $t'>T+t_4$, we call $\widetilde{U}$
a connected component of $X^{-1}(E'\cap \{t>t'\})$ and
$\widetilde{E}=X(\widetilde{U}).$ We have proved that the horizontal sheet
$\widetilde{E}$ is a horizontal geodesic graph over $\alpha \times \rr$ for some geodesic in the direction of the vector field $Y$.

Similarly, we can prove that for $t'$ large, $X^{-1}(M\cap\{t<-t'\})$
has a finite number of connected components $\tilde V$ and each
$X(\tilde V)$ is a horizontal graph over $\beta\times\rr$ for some
geodesic $\beta$ such that $\beta\times\{-1\}\subset{\cal P}$.

\begin{claim}
For $t'>T+t_4$ large, $E'\cap \{t>t'\}$ (resp. $ E'\cap \{t<-t'\}$) is
a horizontal Killing graph over a domain of $\alpha \times \rr$
(resp. $\beta \times \rr$).
\label{cl:killinggraph}
\end{claim}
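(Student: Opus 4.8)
The plan is to prove, exactly as in Claim \ref{cl:graph}, that $E'\cap\{t>t'\}$ is transverse to a second horizontal field, namely the horocyclic direction $\widehat Y=E_1=y\partial_x$ whose integral curves are the horocycles $\{y=\text{const}\}$ and whose flow is generated by the parabolic Killing field $\partial_x$ fixing the endpoint $\infty$ of $\alpha=\{x=0\}$. Indeed, transversality of $E'\cap\{t>t'\}$ to $\widehat Y$ is precisely the assertion that it can be written as a graph $X(y,t)=(f(y,t),y,t)$ along horizontal horocycles, i.e.\ a horizontal Killing graph over a domain of $\alpha\times\rr$ in the sense of Definition \ref{killinggraph}. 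Since Claim \ref{cl:graph} has already exhibited $E'\cap\{t>t'\}$ as a geodesic graph, it has uniformly bounded curvature by Lemma \ref{bcurv}; the present claim upgrades this by controlling the tangent planes along the second foliation.

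First I would set up the contradiction hypothesis in the style of Claim \ref{cl:graph}: if $E'\cap\{t>t'\}$ is not a Killing graph, then there is $z\in\widetilde U$ with $q=X(z)\in\{t>t'\}$ and $\widehat Y(q)\in T_qE'$. I then place a compact stable minimal annulus tangent to $E'$ at $q$ with boundary outside ${\cal R}_\epsilon$. This is where the only genuinely new ingredient enters: I need a barrier $\widehat A_0$ whose convex symmetry curve carries the parabolic field $\widehat Y$ tangentially and along which the unit normal still sweeps all directions of its symmetry plane, so that an ambient isometry $\psi_{p_0}$ makes $\psi_{p_0}(\widehat A_0)$ tangent to $E'$ at $q$ with $\psi_{p_0}(\partial\widehat A_0)\cap{\cal R}_\epsilon=\emptyset$. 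Such an annulus is produced exactly as $A_0$ was in Section~2.3 (respectively Lemma \ref{pslannuli} in $\psl$), after re-selecting the slab width $\epsilon$ and the parameters $\ell,\rho$ so that the confinement of Claim \ref{cl:region} and the containment statement of Lemma \ref{lema} remain valid for the new barrier and its associated small annuli $C_\ell$.

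With the barrier in place the remainder is identical to the proof of Claim \ref{cl:graph}. Tangency of $\widehat A_0$ and $E'$ at $q$ forces $X^{-1}(\widehat A_0\cap E')$ to consist, locally around $z$, of at least two equiangular arcs bounding local components $D_1,D_2$ which, by Lemma \ref{lema}, lie in distinct components of $\widetilde U\setminus X^{-1}(\widehat A_0\cap E')$ with images in $\widehat A_0^-$. Applying the Dragging Lemma (Lemma \ref{DL}) to each $D_j$ along translated copies of the $C_\ell$, first into the compact pieces $K_0,K_0'$ and then joining the resulting arcs inside $X^{-1}(K)$ and $X^{-1}(K')$, produces a simple closed curve $\widetilde\Gamma$ bounding a disk $D\subset\widetilde U$. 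The intersection count of $X(\Gamma_1\cup\Gamma_1')$ with the auxiliary vertical plane $P$ then yields the same parity contradiction as before (odd, since the dragged curve crosses $P$ from $P^-$ to $P^+$; even, since every interior arc of $D\cap X^{-1}(P)$ returns to $\Gamma_1\cup\Gamma_1'$). Hence no such $q$ exists, $E'\cap\{t>t'\}$ is transverse to $\widehat Y$, and the symmetric statement for $E'\cap\{t<-t'\}$ over $\beta\times\rr$ follows verbatim.

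The main obstacle is the barrier step of the second paragraph. Unlike the hyperbolic translation along $\alpha$ governing the geodesic graph, the parabolic field $\widehat Y$ does not preserve the slab ${\cal R}_\epsilon$, so one must verify that the adapted annulus $\widehat A_0$ together with its stable foliation still fits between the barriers $S_\infty,\widetilde S_\infty$ and keeps its boundary out of ${\cal R}_\epsilon$ throughout the whole dragging process. Once transversality to $\widehat Y$ is secured the payoff is immediate: on the Killing graph the parabolic Killing field $\partial_x$ restricts to a nowhere-vanishing Jacobi field, so by Fischer-Colbrie and Schoen \cite{Fischercolbrie} the sheet is stable, which combined with Lemma \ref{bcurv} gives the uniform curvature bound and the decay estimates of Section \ref{subsec:ftc} required to conclude finite total curvature.
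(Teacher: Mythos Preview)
Your mechanism is correct and is exactly the engine of Claim~\ref{cl:graph}, but you take a longer path than the paper and leave the one new step undone.

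The paper does not construct a second annulus. It reuses $A_0$ and observes that the argument of Claim~\ref{cl:graph}---the local tangency picture, Lemma~\ref{lema}, the dragging of the $C_\ell$, the parity count with $P$---never referred to the particular field $Y$, only to the existence of an isometric copy of $A_0$ tangent to $E'$ at $q$ with $\partial A_0\cap{\cal R}_\epsilon=\emptyset$. So nothing is re-run: the only task is to enlarge the group of positioning isometries so that tangency can be achieved at every $q$ with $N(q)\perp\widehat Y(q)$. In the half-plane model with $\alpha=\{x=0\}$, the paper simply adds the parabolic translations $(x,y)\mapsto(x+c,y)$ to the homotheties at the origin already used; these carry the symmetry plane of $A_0$ to an arbitrary vertical half-line $\{x=c\}$, along which the normal to $A_0$ sweeps precisely the directions orthogonal to $\partial_x=\widehat Y$. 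Your $\widehat A_0$, ``produced exactly as $A_0$'', is just $A_0$---along its symmetry curve in $\{x=0\}$ the fields $Y$ and $\widehat Y$ already coincide---so there is nothing new to build.

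What you flag as ``the main obstacle'' is in fact the entire content of the paper's proof, and you do not resolve it. Parabolic translations do not preserve ${\cal R}_\epsilon$, so one must check that $\partial A_0$ stays outside after translating. The paper's argument is short: ${\cal R}_\epsilon$ is the wedge bounded by the half-lines through the origin at angles $\pm\theta_\epsilon$; choose $\epsilon>0$ so small that at height $y=1$ the bounded horizontal displacement needed keeps $\partial A_0$ outside this wedge; since homothety at the origin preserves the wedge and acts transitively on heights, the same holds for every $y$. Supplying this $\epsilon$-check, invoking Claim~\ref{cl:graph} directly with the enlarged positioning group, and dropping the redundant $\widehat A_0$ is the whole proof.
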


\begin{proof}
We consider the half-space model of $\hh$ with orthonormal basis
$(e_1,e_2)$ and the geodesic $\alpha$ represented by the half-line
$\{x=0\}$. In this model the equidistant curves $\alpha^{-\epsilon}$
and $\alpha^{\epsilon}$ are half-lines making angles $\pm \theta$ with
$\{x=0\}$. In this model, horizontal translations of the annulus $A_0$
(keeping the origin as a fixed point at infinity) correspond to
homoteties and rotations centered at the origin of the model in such a
way that the boundary curves of $A_0$ do not intersect
$\alpha^{-\epsilon}\times \rr$ nor $\alpha^{\epsilon} \times
\rr$. Claim 3 implies that we cannot find a point of $E$ which is
tangent to $A_0$ along its symmetry curve $\gamma_0 \times \rr$.
Looking for the set of admissible transformations, Claim 3 concludes
that $E$ is transverse to any geodesic orthogonal to $\alpha \times
\rr$.

To prove that the component $E$ is a horizontal Killing field, it
suffices to move the annulus $A_0$ with homothety and horizontal
translation along $e_1$ direction and check that we can place
$\gamma_0 \times \rr$ in any position of the slab with boundary of
$A_0$ not intersecting $\alpha^{-\epsilon}\times \rr$ and
$\alpha^{\epsilon} \times \rr$. If we choose $\epsilon >0$ such that
it is possible at height $y=1$ into the half-plane model, then we have
the same degree of freedom at each $y >1$, using isometries of $\hh$.
\end{proof}

Let $\alpha\times\{1\}$ and $\beta\times\{-1\}$ be two
geodesics in ${\cal P}$ such that $\alpha$ and $\beta$ share an
endpoint $a\in\partial_\infty\hh$. We consider a foliation of
horocylinders $\bigcup_{c\geq c_0}{\mathcal H}(c)$ with boundary points
at infinity $\{a \} \times \rr$ and we consider a horizontal sheet $E$
of $M\cap\bigcup_{c\geq c_0}{\mathcal H}(c)$ parametrized by
$X(U)$. Recall that $X(U) \subset \Omega \times \rr$ (see the
beginning of Section 3). Let us denote by $\alpha_1,\beta_1$ the
geodesics in $\partial \Omega$.  Let $\gamma_c$ be the geodesic
orthogonal to $\alpha$ passing through the point $\alpha \cap
{\mathcal H}(c)$. For $c$ large enough,
$\beta_1\cap\gamma_c$ is non empty.

We consider the cusp end of $\Omega$ bounded by arcs $\tilde \gamma_{c_0},
\tilde \alpha_1$ and $\tilde \beta_1$, contained in $\gamma_{c_0},
\alpha_1$ and $\beta_1$. Consider $c_0$ sufficiently large so that the
distance between $\gamma_{c_0}\cap \tilde \alpha_1$ and
$\gamma_{c_0}\cap \tilde \beta_1$ is less than $\epsilon$ and $M \cap
\bigcup_{c\geq c_0}{\mathcal H}(c)$ is contained in a vertical slab
${\cal R}_\epsilon$
%bounded by $\tilde \alpha_1\times\rr$, $\tilde\beta_\times\rr$, o
%width at most $\epsilon$.
of width $\epsilon$.

Let $Y$ be the vector field defined just before Claim \ref{cl:graph}
for $\alpha$.

\begin{claim}
  For $c'$ large, any horizontal sheet in $M\cap \bigcup_{c\geq c_0}{\mathcal H}(c)$ is a horizontal geodesic graph over a domain of $\alpha
  \times \rr$ transverse to the field $Y$ and contained in
  ${\cal R}_{\epsilon}$.

  \label{cl:hor}
\end{claim}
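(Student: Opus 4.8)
The statement to prove is Claim 6: for $c'$ large, any horizontal sheet of $M\cap\{\mathcal H(c); c\geq c'\}$ is a horizontal geodesic graph over a domain of $\alpha\times\rr$, transverse to the field $Y$, and contained in $\mathcal R_\epsilon$. Let me think about how the previous material sets this up.

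We've established the vertical case (Claims 3, 4, 5). Now we're looking at horizontal sheets, which are connected components cut out by the horocylinder foliation $\mathcal H(c)$ with boundary at infinity $\{a\}\times\rr$, where $a$ is the common endpoint of $\alpha$ and $\beta$. The paragraph before the claim has already done crucial setup: it chose $c_0$ large enough so that $M\cap\{\mathcal H(c); c\geq c_0\}$ lives in a slab $\mathcal R_\epsilon$ of width $\epsilon$. So the "contained in $\mathcal R_\epsilon$" part is essentially handed to us by construction — that's the confinement coming from the cusp geometry of $\Omega$ and the asymptotic hypothesis.

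So the real content is the transversality to $Y$, i.e. the graph property. The structure should mirror Claim 3 exactly. The idea: suppose toward contradiction that at some point $q=X(z)$ with $z$ in a horizontal sheet component $\widetilde U$, the field $Y(q)$ is tangent to the surface. Then we want to place a translated copy of the barrier annulus $A_0$ (via the isometries $\psi_p$ combining homotheties/rotations and vertical translations) tangent to the sheet at $q$ along its symmetry curve $\gamma_0\times\rr$, with $\partial A_0$ kept outside $\mathcal R_\epsilon$. Tangency plus the two-sided local structure of $X^{-1}(A_0\cap E)$ produces (at least) two local components $D_1,D_2$ on opposite sides, exactly as in Claim 3. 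Then the Dragging Lemma (Lemma \ref{DL}), applied with the small catenoids $C_\ell$ inside $\mathrm{Tub}^\pm(A_0)$, lets us drag arcs from these local pieces out to compact sets, build a closed curve $\widetilde\Gamma$ bounding a disk $D$ in $\widetilde U$, and run the parity argument against a dividing vertical plane $P$ to reach a contradiction on the number of crossings. Because the geometry is confined to the slab $\mathcal R_\epsilon$ and the sheet is simply connected (this must be re-argued for horizontal sheets, or inherited from the analogous earlier maximum-principle argument with horocylinders in place of slices), all the ingredients of Claim 3 transfer.

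The main obstacle — and the reason this is a separate claim rather than a corollary of Claim 3 — is that the foliation is now by horocylinders rather than by horizontal slices, and the relevant isometries fixing the point at infinity $a$ are parabolic rather than the hyperbolic translations used before. I would need to check that the barrier annulus $A_0$ and its family of small catenoids $C_\ell$ can still be positioned with $\partial A_0\cap\mathcal R_\epsilon=\emptyset$ while being tangent to the sheet at $q$ along $\gamma_0\times\rr$, using these parabolic isometries (homotheties and rotations centered at the origin in the half-plane model, precisely as invoked in Claim 5). The key point to verify is that the Dragging-Lemma dynamics — sliding $C_\ell$ far from $P$, then down into a compact piece — remains available inside the horocylinder region, which requires that the sheet's confinement in $\mathcal R_\epsilon$ is uniform enough that the translated barriers never touch $\partial(\text{sheet})$. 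Once this positioning is secured, the parity contradiction is formally identical to Claim 3, so I would present the proof as: (i) reduce to transversality, (ii) set up the tangency and local components $D_1,D_2$, (iii) invoke Claim 3's dragging-and-parity machinery verbatim with horocylinders replacing slices, citing Lemmas \ref{DL} and \ref{lema}.
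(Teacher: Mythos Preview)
Your overall plan is right and matches the paper's: confine to the slab $\mathcal R_\epsilon$, reduce to transversality to $Y$, then rerun the tangency/Dragging/parity machinery of Claim~\ref{cl:graph}. The paper indeed says it argues ``verbatim'' to Claim~\ref{cl:graph} at that stage, and it uses the half-plane model with $a$ at infinity so that the relevant isometries moving $A_0$ and $C_\ell$ are homotheties and horizontal Euclidean translations, exactly as in Claim~\ref{cl:killinggraph}---so your identification of that technical point is on target.

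There is, however, a genuine gap in your outline at the step you flag as ``must be re-argued'': showing that the horizontal sheets are simply connected. In the vertical case this came from Lemma~\ref{rotcatenoid} (vertical catenoids rule out a compact curve in the homology class of $\partial\Sigma$ bounding an annulus with $\partial M$). That argument does not transfer by merely ``replacing slices with horocylinders'': horocylinders are not minimal, and there is no rotationally invariant barrier playing the role of the catenoids for the horocycle foliation. The paper instead switches from horocylinders to the family of \emph{vertical geodesic planes} $\gamma_c\times\rr$ (with $\gamma_c\perp\alpha$ through $\alpha\cap\mathcal H(c)$) and introduces a \emph{new} barrier: a Scherk-type minimal graph over an ideal quadrilateral $\mathcal Q$ with two opposite edges $\gamma_{c_0},\gamma_{c_1}$, taking values $+\infty$ there and $-\infty$ on the other two edges (chosen outside $\Omega$). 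Vertical translates of this Scherk graph give the maximum-principle contradiction that rules out a compact curve $\Gamma\subset M\cap(\gamma_c\times\rr)$ in the class of $\partial M$, and hence yield simple connectedness of the sheets. Without this ingredient your Step~(iii) cannot start, since Lemma~\ref{lema} and the parity argument both require the sheet $U'$ to be a disk. Once you insert this Scherk barrier and set up the nested compacts $K_0\subset K$, $K_0'\subset K'$ between successive $\gamma_c$-planes (as the paper does), the rest of your plan goes through as written.
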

\begin{proof}
  We take $c_0\in\rr$ large enough so that $\partial M \cap
  \left(\cup_{c\geq c_0}\gamma_c\times\rr\right) =\emptyset$.  By the
    maximum principle using vertical geodesic planes, we know that no
    connected component of $M\cap(\gamma_{c_0}\times\rr)$ can bound a
    compact disk in $M$.

We take an ideal geodesic quadrilateral ${\cal Q}\subset\hh$, two of
whose opposite edges are $\gamma_{c_0}$ and $\gamma_{c_1}$, for some
$c_1>c_0$, and such that there exists a Scherk minimal graph defined
over ${\cal Q}$ with boundary values $+\infty$ over
$\gamma_{c_0}\cup\gamma_{c_1}$ and $-\infty$ over the other two
edges. Taking $c_0$ large enough, we can assume that $\partial{\cal
  Q}\setminus(\gamma_{c_0}\cup\gamma_{c_1})\subset\hh\setminus\Omega$
(recall that $M\subset\Omega\times\rr$).

  We now claim that $M\cap(\gamma_{c}\times\rr),$ for $c> c_1$, does
  not contain any compact curve $\Gamma$. Suppose this was not the
  case. We already know that $\Gamma$ has to be in the homology class
  of $\partial M$. Thus $\Gamma \cup\partial M$ bounds an annulus $A$.
  Using the maximum principle with $A$ and vertically translated
  copies of the Scherk graph just described above, we reach a
  contradiction.

  Given $c_2>c_1$, we consider a connected component $E=X(U)$, where $U$ is a connected
  component of  $X^{-1}\left(M\cap(\cup_{c> c_2}\gamma_c\times\rr)\right)$.  We can assume that
  $(\gamma_{c_2}\times\rr) \cap {\cal R}_{\epsilon}$ is
  transversal. We have proved that $U$ is simply-connected and
  $\partial E$ consists of curves joining $( \alpha\cap
  \gamma_{c_2})\times\{-1\}$ to $( \beta \cap
  \gamma_{c_2})\times\{1\}$ (possibly no one) and perhaps some
  curves whose endpoints are both in $(\beta\cap
  \gamma_{c_2})\times\{-1\}$ or $( \alpha\cap \gamma_{c_2})
  \times\{1\}$.
  We know that $X^{-1}(M\cap \{|t|>t'\})$ has a finite number of
  connected components and each one of them corresponds to (via $X$) a
  horizontal graph. Hence we get that $\partial E$ has a finite number
  of curves.

  Take a compact set $K_0$ contained in the halfspace $\cup_{c>
    c_2} (\gamma_c\times\rr)$.  By properness, there are a finite number
  of connected components in $X^{-1}(K_0\cap E)$. Hence we can find a
  compact set $K\subset\cup_{c> c_2} (\gamma_c\times\rr)$ containing
  $K_0$ so that any two points of $X^{-1}(K_0\cap E)$ can be joined by
  a curve contained in $X^{-1}(K\cap E)$. Let us suppose that $K$ is
  contained in the vertical slab between $(\gamma_{c_2}\times\rr)$ and
  $(\gamma_{c_3}\times\rr),$ for some $c_3>c_2.$

  Now let us consider a connected component $E'=X(U')$, where $U'$ is a connected component
  of $X^{-1}\left(E\cap (\cup_{\{c\geq
    c_3+2\rho\}}\gamma_c\times\rr)\right),$
  where $\rho$ is the radius of the extrinsic balls where the small
  annuli $C_\ell$ are contained in. We observe that the boundary of
  $\partial C_\ell$ is outside ${\cal R}_\epsilon$.

  Analogously, we can find two compact sets
  $K'_0,K'\subset\cup_{\{c\geq c_3+2\rho\}}\gamma_c\times\rr$ such
  that any two points of $X^{-1}(K'_0\cap E')$ can be joined by a
  curve in $X^{-1}(K'\cap E').$ We take $c_4>c_3+2\rho$ such that $K'$
  is contained in the vertical slab between
  $\gamma_{c_3+2\rho}\times\rr$ and $\gamma_{c_4}\times\rr$.

  Suppose that the annulus $A_0$ can be contained in $\cup_{\{0\leq
    r\leq c_5\}}(\gamma_{r}\times\rr)$ and we take $c'>c_4+c_5+2\rho$.
  Now we can argue verbatim to the proof of Claim \ref{cl:graph} using
  this new $E'$ contained in the slab ${\cal R}_\epsilon$ and the
  annuli $A_0$ and $C_\ell$ with boundaries outside ${\cal
    R}_\epsilon$. To do that it is enough to check that we can move
  $A_0$ and $C_\ell$ with boundary curves outside ${\cal
    R}_\epsilon$. In the model of the half-plane with $a$ at infinity,
  the curves $\tilde \alpha_1$ and $\tilde \beta_1$ are parallel
  vertical half-lines. The annuli can be moved using homothety and
  horizontal translations as in Claim \ref{cl:killinggraph}.
  
  These arguments show that $E'\cap\cup_{c\geq c'}
  (\gamma_c\times\rr)$ is transversal to the vector field $Y$.

  We call $\widetilde U$ a connected component of $X^{-1}\left(E'\cap
  \cup_{\{c\geq c'\}}(\gamma_c\times\rr)\right)$ and $\widetilde
  E=X(\widetilde U)$. Hence $\widetilde E$ is a horizontal geodesic
  and Killing graph. More precisely, for any $c>c'$, $\widetilde E\cap
  (\gamma_c\times\rr)$ consists of one curve $d_c=d_c(t)$ joining
  $(\beta\cap \gamma_{c'})\times\{-1\}$ to $(\alpha\cap
  \gamma_{c'})\times\{1\}$ and $\partial \widetilde E=d_{c'}$.

\end{proof}

\begin{claim}
A horizontal Killing graph asymptotic to an admissible polygonal has
finite total curvature

  \label{cl:ftc}
\end{claim}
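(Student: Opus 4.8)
The plan is to reduce the statement to a single killing-graph sheet, show that its Gaussian curvature decays exponentially in the asymptotic region, and then add up the contributions of the finitely many sheets together with a compact core. Since each end decomposes into finitely many vertical sheets (Claim~\ref{cl:finite}) and finitely many horizontal sheets near the vertical edges, and each such sheet is a horizontal killing graph over some vertical plane $\alpha\times\rr$, it suffices to bound $\int_E|K|\,dA$ for one killing-graph sheet $E$.

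First I would exploit stability. As observed after Definition~\ref{killinggraph}, the parabolic isometry fixing the endpoint of $\alpha$ induces a positive Jacobi field on the killing graph $E$, so by Fischer-Colbrie and Schoen \cite{Fischercolbrie} the sheet is stable and hence, by Schoen's curvature estimate \cite{S}, has uniformly bounded second fundamental form away from $\partial E$; in particular $|A|\to 0$ at infinity and the tangent planes become vertical, so $n_3=\tanh\omega\to 0$. Next I would pass to the conformal model of Section~\ref{subsec:ftc}: reparametrizing $E$ by its third coordinate produces a conformal parameter $w=u+it$ in which $\phi\equiv\tfrac14$, so $\phi$ has no zeros and the decay lemma of Section~\ref{subsec:ftc} applies, giving
\[
|\omega|(u,t)+|\nabla\omega|(u,t)\le C\,e^{-t-|u|}.
\]

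With $\phi$ free of zeros, $\log|\phi|$ is harmonic, so the intrinsic curvature of the induced metric $ds^2=4\cosh^2\omega\,|\phi|\,|dz|^2$ can be computed explicitly from $K=-\rho^{-2}\Delta_0\log\rho$ with $\rho^2=4\cosh^2\omega\,|\phi|$. Using $\Delta_0\log\cosh\omega=\dvv(\tanh\omega\,\nabla\omega)=\sech^2\omega\,|\nabla\omega|^2+\tanh\omega\,\Delta_0\omega$ together with the field equation $\Delta_0\omega=2|\phi|\sinh(2\omega)$, one obtains
\[
K=-\frac{\sech^2\omega\,|\nabla\omega|^2+4|\phi|\sinh^2\omega}{4\cosh^2\omega\,|\phi|}\le 0,
\]
so that in the normalization $|\phi|=\tfrac14$ (where $dA=\cosh^2\omega\,du\,dt$) the total curvature of the sheet becomes
\[
\int_E|K|\,dA=\int\left(\sech^2\omega\,|\nabla\omega|^2+\sinh^2\omega\right)du\,dt.
\]
Since $\omega$ is small in the asymptotic region, both $\sech^2\omega\,|\nabla\omega|^2$ and $\sinh^2\omega$ are bounded by $C'e^{-2t-2|u|}$, whose integral over $\{u\in\rr,\ t\ge t_1\}$ is finite. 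Summing over the finitely many sheets and adding the curvature of the compact core then yields finite total curvature in $\hr$. For the horizontal sheets near a vertical edge the same computation applies after using the isometry of Claim~\ref{cl:hor} to place the sheet in the half-plane model as a killing graph over a vertical plane, the diverging horizontal coordinate playing the role of $u$. In $\psl$, where $K$ need not have a sign, I would instead invoke Lemma~\ref{bcurv} to guarantee bounded curvature and then conclude directly from the result of \cite{hmp} that a geodesic asymptotic multigraph of bounded curvature has finite total curvature.

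The main obstacle is justifying that the Hopf differential $\phi$ has no zeros in the asymptotic region, which is exactly what makes the decay lemma of Section~\ref{subsec:ftc} and the harmonicity of $\log|\phi|$ available; this should follow from the fact that the sheet is simultaneously a geodesic and a killing graph (Claims~\ref{cl:graph}--\ref{cl:killinggraph}), hence transverse to both horocycle and geodesic foliations, which prevents degeneration of the horizontal/vertical directions near the puncture. A secondary technical point is matching the coordinate in the decay estimate for the horizontal sheets, whose height stays bounded while they diverge horizontally toward the ideal point $a$.
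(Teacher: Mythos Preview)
Your overall strategy---parametrize each sheet conformally by its third coordinate so that $\phi\equiv\tfrac14$, apply the decay lemma to $\omega$, compute $K$ explicitly, and integrate---is exactly the paper's, and your curvature computation is correct. The real gap is the step where you assert that the decay lemma of Section~\ref{subsec:ftc} directly gives
\[
|\omega|(u,t)+|\nabla\omega|(u,t)\le C\,e^{-t-|u|}.
\]
It does not. That lemma gives $|\omega(p)|\le Ce^{-\mathrm{dist}(p,\partial\mathcal U)}$ in the flat metric $|\phi|\,|dz|^2$. For a vertical sheet $E$ the conformal domain is the half-plane $\Omega_0=\{t\ge t_0\}$ with $\partial\Omega_0=\{t=t_0\}$, so the distance to the boundary is (a constant times) $t-t_0$ and the lemma yields only $|\omega|(u,t)\le C_0e^{-t}$, with \emph{no} decay in $u$. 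With that bound alone your integral $\int_{\rr}\int_{t_1}^\infty e^{-2t}\,dt\,du$ diverges, and finite total curvature does not follow. What you flag as a ``secondary technical point'' is in fact the crux; by contrast, the issue you call the ``main obstacle'' (zeros of $\phi$) is a non-issue once you parametrize by the height, since then $\phi\equiv\tfrac14$.

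The paper recovers the missing $e^{-|u|}$ decay by gluing the vertical sheet $E$ to the adjacent \emph{horizontal} sheets $E',E''$ near each ideal endpoint $a$ of $\alpha$, and parametrizing $E\cup E'\cup E''$ globally by the height on a domain $\Omega_1\subset\C$. Using that the curves $t\mapsto F(s,t)$ have finite horizontal length (from $|F_t|_{\hh}=\sinh^2\omega$ and the $e^{-t}$ decay) together with \emph{properness} of the immersion, one shows these arcs escape uniformly into smaller horodisks as $s\to\infty$; this pins down, in the conformal parameter, where the boundary curve $\{u=f_0(t)\}$ of $E'$ sits. The decay lemma applied to $E'$ with respect to \emph{its} boundary then gives $|\omega|\le C_1e^{-u}$ for large $u$, and the symmetric argument at the other endpoint of $\alpha$ gives decay in $-u$; combining yields $|\omega|\le C_2e^{-t-|u|}$, after which interior gradient estimates and your integration finish the job. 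Your proposal contains no mechanism for gaining this horizontal decay, and invoking ``an isometry of Claim~\ref{cl:hor}'' does not replace it: one still needs to locate the boundary of the horizontal sheet in the conformal coordinate, which is precisely what the properness argument supplies.
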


 \begin{figure}[h]
    \centering
    \includegraphics[height=8cm]{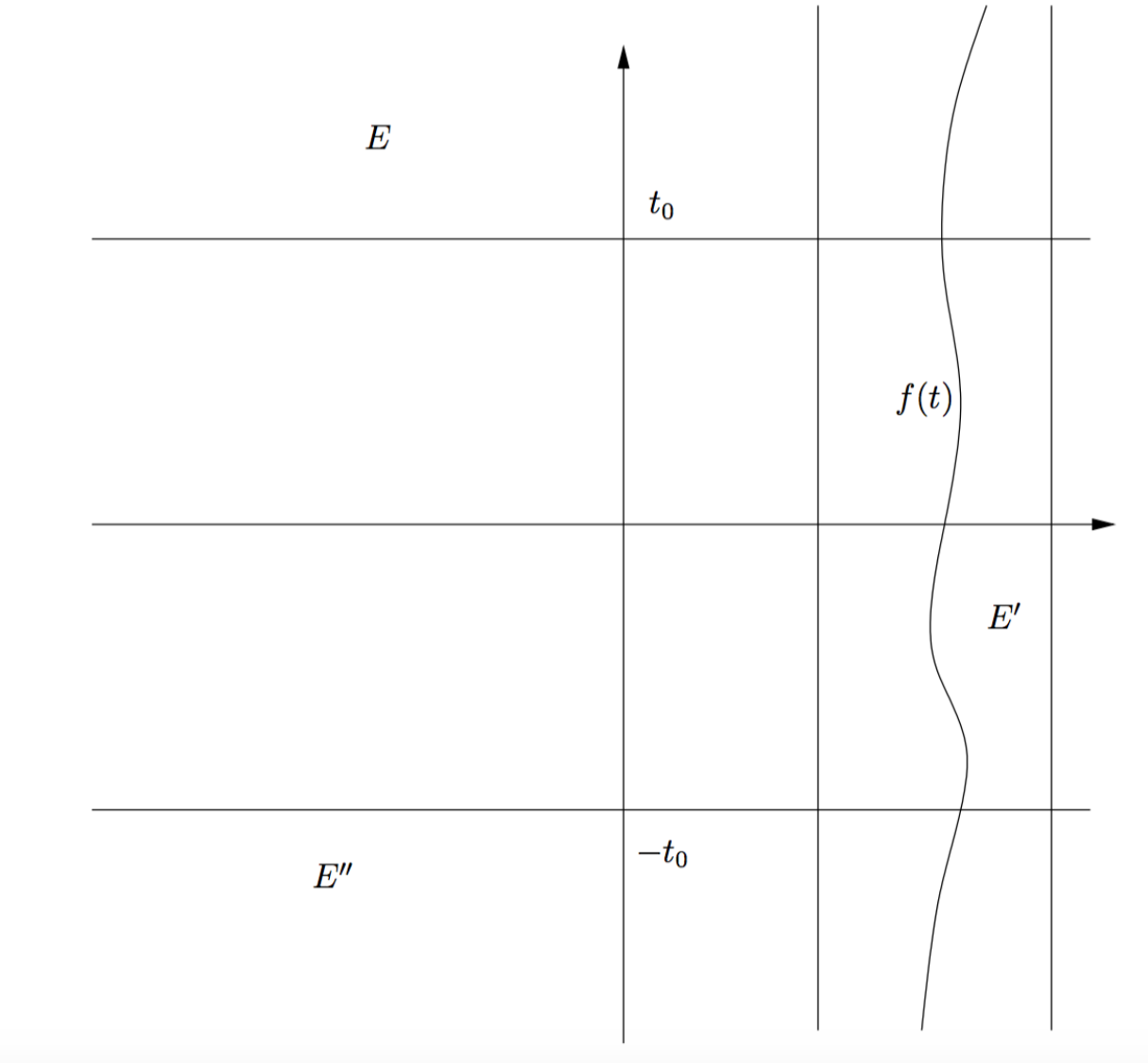}
    \caption{Domain $E \cup E' \cup E''$}
    \label{fig:lastfig}
  \end{figure}

\begin{proof}
In $\psl$ this claim has been proved in \cite{mn} by constructing
barrier to control the decay of the curvature $K$ on a Killing
horizontal multigraph with finite number of sheets. Killing fields are
used to get estimates of curvature by stability argument. The number
of sheets is limited by the boundary at infinity of $M$ which is a
polygonal line with finite number of edges and the fact that we assume
that the immersion is proper.

In $\hh \times \rr$, we complete the proof by using conformal
parametrization by its third coordinate.  We consider a vertical sheet
$E \subset M \cap \{t > t_0\}$ and we reparametrize conformally $E$ by
$w=u+it$ on $\Omega_0 = \{ t \geq t_0\},$ where $t$ is the third
coordinate of the conformal immersion in $\hh \times \rr$ (since $E$
is a horizontal graph, the tangent plane is never horizontal and $t$
is defined globally on $E$) given by
$$X(u,t)= (F(u,t),t) \in \hh \times \rr.$$

The quadratic Hopf differential associate with the horizontal harmonic map $F : \Omega \to \hh$ is given
by $Q=\frac{1}{4} (dw)^2$ and the conformal metric of the conformal immersion is given by
$ds^2 = \cosh^2 \omega |dw|^2$ with 
\begin{equation}
|F_u|_{\hh}= \cosh ^2 \omega \hbox{ and } |F_t|_{\hh}= \sinh ^2 \omega.
\label{conformal}
\end{equation}
In this parametrization $n_3 = \tanh \omega$ is the third component of the normal (see Section 2.4). The decay estimate on $\omega$ providing $Q$ has no zeros on $E$ is given by
$$|\omega| (u,t) \leq C_0e^{-t},$$ 
where $C_0$ is a constant depending on $t_1$, for any $t \geq t_1
>t_0$, $t_1$ large enough. We need to improve this decay in the
variable $u$ to conclude finite total curvature.

Using barriers, $E$ is uniformly asymptotic to $\alpha \times \rr$ at
infinity. Let $a \in \partial _{\infty} \alpha$ and consider $M \cap
\{ {\mathcal H} (c); c \geq c_0\}$ where $\partial _\infty
{\mathcal H} (c) = \{a \}\times\rr$. There is a horizontal sheet $E' \subset
MÊ\cap \{ {\mathcal H} (c); c \geq c_0\}$ which extends to
$E$.

Since $E'$ is a horizontal multigraph on $\alpha \times \rr$, $E'$ can
be conformally parametrized by some topological half-plane $\{w=u+it;
u \geq f_0(t)\},$ where $f_0 : \rr \to \rr$ is a continuous function.
The curve $t \to X(f(t),t)$ is a parametrization of $\partial E'
\subset M \cap {\mathcal H} (c_0)$ which extends $E$. For $t \leq
-t_0$, $E'$ is connected to a vertical sheet $E'' \subset M \cap \{ t
\leq -t_0\}$.

We parametrize $E \cup E' \cup E''$ globally on some domain $\Omega_1
\subset \C$ where the third coordinate is $x_3=t$ (see Figure
\ref{fig:lastfig}). We study $t \to F(s,t) \subset \hh$, the immersion of
the curve $\{u=s; t \in \rr\}$ for $s \geq \sup_{t \in [-t_0,t_0]} f
(t)$. Since $|\omega|(s,t) \leq C'e^{-t}$, using (\ref{conformal}),
the curve $t \to F(s,t)$ is non proper and contained in a compact arc
linking two points into consecutive geodesics $\alpha$ and $\beta$ of
${\mathcal P}$ having same infinite point $a \in \partial _\infty \hh$. 
When $s \to \infty$, the arc is uniformly
diverging to infinity since the end $M$ is properly immersed. In
particular, for $s$ large enough, the arc is contained in the convex
domain bounded by ${\mathcal H} (c_1)$ for some $c_1 \geq c_0$.
We observe that this argument implies that for any $c_2 \geq c_1$, the
curve $(E \cup E' \cup E'')\cap {\mathcal H} (c_2)$ is parametrized in
$\Omega$ by some curve $\{ u =f_2 (t)\}$ with $C_1 \leq f_2 (t) \leq
C_2$.

This estimate provides a control of the boundary of horizontal sheet in the parametrization by the third
coordinate on $\Omega_1$. Now we apply the decay estimate on $E'$ and obtain that
$$|\omega | (u,t) \leq C_1 e^{-u},$$
for $u > c_3$. This estimate holds on points of the vertical sheet $E$ for $u> c_3$ large enough close to the boundary  point $a \in \partial_\infty \alpha$. We can do the same argument at the other boundary point
of $\alpha$. Doing this we obtain a uniform decay on any vertical sheet $E$:

$$|\omega | (u,t) \leq C_2 e^{-t - |u|}, \hbox{ for } t \geq t_1 >t_0 \hbox{ and } u \in \rr.$$
Now using the fact that $\omega$ is solution of the elliptic equation $\Delta_0 \omega -2|\phi |\sinhÊ\omega =0$,
we apply interior gradient estimates to obtain that
$$|\omega|(u,t) + |\nabla \omega | (u,t) \leq C_3 e^{-t - |u|} , \hbox{ for } t \geq t_1 >t_0 \hbox{ and } u \in \rr.$$
This estimate provides finite total curvature for a vertical sheet $E$. The metric is $ds^2 =Ê\cosh^2 \omega |dz|^2$
and the curvature is given by
$$K(u,t)= -\tanh^2 \omega - \frac{|\nabla \omega|^2}{4 \cosh^2 \omega} \leq 0.$$
Using the exponential decay this proves that any vertical sheet of $M \cap \{ |t |\geqÊt_1\}$ has finite total curvature:

$$\int _{E \cap \{t \geq t_1\}Ê} |K| dA Ê\leq C.$$

It remains to study the horizontal sheets of $M$ in a slab which are connecting two vertical sheets as we describe above. We parametrize any horizontal sheet $E' \subset M \cap \{|t|\leq 2t_1 \}$ and we use the decay in the variable
$u$ with variable $t$ bounded to obtain finite total curvature. Since the number of these sheets is bounded,
the end $M$ has finite total curvature.

\end{proof}

For the next result we are going to use the following notion of length of a complete geodesic. Let $\Gamma$ be an ideal polygon with geodesics $\alpha_i,\beta_i$. At each vertex $a_i$ of $\Gamma$ consider a horocycle $H_i$ so that $H_i\cap H_j=\emptyset$ if $i\neq j.$ Each geodesic $\alpha_i$ meets two of these horocycles, we denote by $|\alpha_i|$ the distance between them. We define $|\beta_i|$ in the same way. It is easy to see that $\sum_{i=1}^k |\alpha_i|- \sum_{i=1}^k|\beta_i|$ does not depend on the choice of the disjoint horocycles.

As corollaries of Theorem \ref{th:main} we obtain the following
theorems.

\begin{theorem}%\label{th:graphs}
Let $M$ be a properly embedded minimal disk in $\hr$ asymptotic to
an admissible polygon at infinity ${\cal P}$. Suppose that the
vertical projection of ${\cal P}$ in $\hh$ is the boundary of a
convex domain $\Omega$. Then $M$ is a vertical graph.

In particular, if $\alpha_i\times\{1\}$ and
$\beta_i\times\{-1\}$, with $i=1,\dots, k$, are the edges of
${\cal P}$ then:
\begin{enumerate}
\item $\sum_{i=1}^k |\alpha_i|= \sum_{i=1}^k |\beta_i|$; and
 \item for any inscribed polygonal domain $D$ in $\Omega$,
$\sum_{i=1}^k |\alpha_i\cap\partial D|= \sum_{i=1}^k
 |\beta_i\cap\partial D|$,
\end{enumerate}
 where $|\bullet|$ denotes the hyperbolic length of the curve
 $\bullet$ as above.
\end{theorem}

\begin{proof}
We first observe that, using the maximum principle with vertical
geodesic planes, we get that $M\subset\Omega\times\rr$.

By Claims \ref{cl:finite} and \ref{cl:graph}, there exists $t'\in\rr$ such that
$M\cap\{t> t'\}$ has a finite number of connected components (we are
using that $M$ is embedded), being each one of them a horizontal
geodesic graph over the plane defined by a horizontal geodesic in
${\cal P}$.  Let $E$ be one such component that is a horizontal
geodesic graph over a domain of $\alpha\times\rr$, where
$\alpha\times\{1\}\subset{\cal P}$. We call $E^*$ the reflected copy
of $E$ with respect to $\{t=t'\}$.

Let $\Delta$ be the component of $\h-\alpha$ such that
$\Delta\cap\Omega =\emptyset$, and fix a point
$p_\infty\in\partial_\infty\Delta-\overline\alpha$. We take a geodesic
$\gamma$ orthogonal to $\alpha$ with $p_\infty$ as one of its
endpoints. We can consider a horizontal translation of $E^*$ along
$\gamma$ towards $p_\infty\times\rr$ so that it does not intersect
$M$. We translate back $E^*$ until reaching its original position. By
the maximum principle, none of these translated copies of $E^*$ can
intersect $M$.

 Up to a vertical translation, we can assume $t'=0$. We denote
 $M_s=M\cap\{t<-s\}$ and $M_s^*$ the reflected copy of
 $M-\overline{M_s}$ with respect to $\{t=-s\}$. We have proved that
 $M_0$ and $M_0^*$ are disjoint. We can then start applying Alexandrov
 Reflection Principle, and we obtain that $M_s$ and $M_s^*$ are
 disjoint for any $s>0$.  Since $t'$ could be taken arbitrarily large,
 we conclude that $M$ is a vertical graph.

 Since $M$ is a vertical graph assuming $+\infty$ on $\alpha_i$ and $-\infty$ on $\beta_i,$ we know that it has to be a Jenkins-Serrin-type graph. In particular, by Collin and Rosenberg \cite{CR} the length of the geodesic arcs on the ideal polygon must satisfy the conditions described on the statement of this theorem.
  
\end{proof}

\begin{theorem}%\label{th:symmetry}
Let $M$ be a properly embedded minimal surface in $\hr$ asymptotic to
a finite number of vertical geodesic planes $\alpha_i\times\rr$,
$i=1,\dots,k$, cyclically ordered (i.e. there exists an ideal convex
polygonal domain $\Omega\subset\h$ whose vertices - all of them at
infinity - are the endpoints of the geodesics $\alpha_i$.) Then $M$ is
a vertical bigraph symmetric with respect to a horizontal slice.
\end{theorem}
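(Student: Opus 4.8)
The plan is to combine the asymptotic description furnished by Theorem~\ref{th:mainhr} with the Alexandrov reflection method applied to the family of horizontal slices $\hh\times\{s\}$. First I would record the structural input: since $M$ is properly embedded with finite topology and each end $E_i$ is asymptotic to the admissible polygon whose edges are the vertical planes $\alpha_i\times\rr$, Theorem~\ref{th:main} gives that $M$ has finite total curvature, and Theorem~\ref{th:mainhr} gives that each end is simultaneously a killing and a geodesic multigraph over $\alpha_i\times\rr$ outside a compact set. In particular each $E_i$ is, far out, the graph of a function tending to zero, with the exponential decay $|\omega|+|\nabla\omega|\le Ce^{-t-|u|}$ of Claim~\ref{cl:ftc}; hence $E_i$ converges uniformly to the vertical plane $\alpha_i\times\rr$ as one diverges to infinity in every direction. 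Using the maximum principle with vertical geodesic planes equidistant to the edges of $\Omega$ and pushed in from outside, I would confine $M\subset\overline\Omega\times\rr$. The decisive elementary observation is that every vertical plane $\alpha_i\times\rr$ is invariant under each horizontal reflection $R_s\colon(z,t)\mapsto(z,2s-t)$; hence the asymptotic boundary $\partial_\infty M$ is invariant under every $R_s$, and this is precisely what will allow the reflection to be initiated at infinity.

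Next I would run Alexandrov reflection coming from above. For $s\in\rr$ set $M_s^+=M\cap\{t\ge s\}$ and let $R_s(M_s^+)$ be its reflected copy, which lies in $\{t\le s\}$. The key lemma to establish is that for $s$ large enough $R_s(M_s^+)$ is disjoint from $M\setminus M_s^+$: because each end is a killing graph converging exponentially to the $R_s$-invariant plane $\alpha_i\times\rr$, the reflected top sheet and the portion of $M$ lying below it are both exponentially close to the same vertical plane and therefore cannot meet near infinity, while on the remaining compact part this holds automatically for $s$ large. With the reflection thus started, I would decrease $s$ and set $c=\inf\{s:\ R_\sigma(M_\sigma^+)\cap(M\setminus M_\sigma^+)=\emptyset\ \text{for all}\ \sigma>s\}$, the first value at which contact occurs, either at an interior point or along the asymptotic boundary $\partial_\infty M$. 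By the interior and boundary (Hopf) maximum principles, a first contact forces $M$ to be globally invariant under $R_c$, that is, symmetric with respect to the horizontal slice $\hh\times\{c\}$.

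Finally I would upgrade symmetry to the bigraph conclusion. The validity of the reflection for every $s>c$ means precisely that no vertical segment meets $M$ in two points both lying above $\hh\times\{c\}$; hence $M\cap\{t>c\}$ is a vertical graph over its projection in $\Omega$, and by the invariance under $R_c$ the lower piece $M\cap\{t<c\}$ is its reflected copy, so $M$ is a vertical bigraph symmetric with respect to $\hh\times\{c\}$. The main obstacle is the initiation of the process at $t=+\infty$: one must show that the reflected tops of the ends do not cross the remainder of $M$ near the asymptotic boundary, and this is exactly where the strong (killing multigraph) asymptotics of Theorem~\ref{th:mainhr}, together with the $R_s$-invariance of the limiting vertical planes and the exponential decay of $\omega$, are indispensable; without that geometric control one cannot begin moving planes from infinity.
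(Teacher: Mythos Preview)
Your overall architecture---confine $M$ to $\Omega\times\rr$, use the asymptotic graph structure of the ends to start Alexandrov reflection with horizontal slices from $t=+\infty$, then deduce symmetry and the bigraph property---is exactly the paper's strategy. The gap is in the initiation step. The sentence ``the reflected top sheet and the portion of $M$ lying below it are both exponentially close to the same vertical plane and therefore cannot meet near infinity'' is not a valid inference: two sheets that are both $C e^{-t}$--close to $\alpha_i\times\rr$ can still cross one another (the decay of $\omega$ controls the angle with the vertical, not an ordering of the two sheets). In particular, for $t$ just below $s$ the original sheet and its reflection are comparably close to the plane and nothing you have written separates them; and the reflected sheet, being in a thin slab $\mathcal R_\epsilon$ around $\alpha_i\times\rr$, could a priori meet the part of $M$ that lies in that same slab at lower heights.

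The paper handles this differently. After reflecting a top sheet $E$ across $\{t=t'\}$ to get $E^*$, it does not compare decay rates at all: it translates $E^*$ horizontally along a geodesic $\gamma$ orthogonal to $\alpha$ toward the side $\Delta$ of $\hh\setminus\alpha$ disjoint from $\Omega$. Since $E^*$ lies in a fixed slab around $\alpha\times\rr$ (Claim~\ref{cl:region}) and $M\subset\Omega\times\rr$, a sufficiently large translation pushes $E^*$ entirely into $\Delta\times\rr$, hence off $M$. Translating back and applying the maximum principle along the way shows $E^*\cap M=\emptyset$ in the original position, and more precisely that $E^*$ lies in the component of $(\hr)\setminus M$ containing $\Delta\times\rr$. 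This is what lets Alexandrov begin. Once you replace your decay argument by this horizontal-translation trick, the rest of your outline (decreasing $s$, first contact forces symmetry, hence bigraph) is fine; note also that you should say why a first contact must occur---if not, the reflection would run for all $s$ and make $M$ a vertical graph, contradicting the fact that each end is a horizontal graph over a full vertical plane.
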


\begin{proof}
We proceed as in the proof of the previous theorem and obtain that
$M_0$ and $M_0^*$ are disjoint. More precisely, if ${\cal M}$ denotes
the component of $\hr-M$ containing $\Delta\times\rr$, then $M_0^*$ is
contained in ${\cal M}$. Applying Alexandrov Reflection Principle we
get that $M_s^*\subset{\cal M}$ for $s$ small. But there must exist
$s_0>0$ such that $M_{s_0}^*=M_{s_0}$ because otherwise we would reach
a contradiction. Hence $M$ is symmetric with restect to $\{t=s_0\}$.
\end{proof}


\begin{thebibliography}{1}
\bibitem{MCH} P. Collin, L. Hauswirth and M. Hoang Nguyen, {\it Construction of Minimal annuli in $\ppsl$ via variational method}, preprint.
    
    
    \bibitem{CollinHR2} 
P. Collin, L. Hauswirth and H. Rosenberg, 
\textit{Properly immersed minimal surfaces in a slab of $\hr,$ $\hh$
  the hyperbolic plane}, 
Archiv Math., {\bf 104} (2015), 471--484.


    \bibitem{CollinHR2} 
P. Collin, L. Hauswirth and H. Rosenberg, 
\textit{Properly immersed minimal surfaces in a slab of $\hr,$ $\hh$
  the hyperbolic plane}, 
Archiv Math., {\bf 104} (2015), 471--484. 

\bibitem{CollinHR} P. Collin, L. Hauswirth and
  H. Rosenberg, \textit{Minimal surfaces in finite volume hyperbolic
    3-manifolds $N$ and in $M\times\mathbb S^1,$ $M$ a finite area
    hyperbolic surface}, preprint, arXiv:1304.1773v1.

\bibitem{CR}
  P. Collin and H. Rosenberg,
  {\em Construction of harmonic diffeomorphisms and minimal graphs},
  Ann. of Math., {\bf 172} (2010),
  1879--1906. %DOI:10.4007/annals.2010.172.1879.

\bibitem{d} B. Daniel.
  {\em Isometric immersions into $\mathbb S^n\times\rr$ and $\mathbb H^n\times\rr$
    and applications to minimal surfaces},
  Trans. A.M.S., {\bf 361} (2009), 6255--6282.
  
  \bibitem{ET1} R. Sa Earp and E. Toubiana, {\it A minimal stable vertical planar end in $\hh \times \rr$ has finite total curvature},. arXiv:1310.5679.
  
   \bibitem{ET2} R. Sa Earp and E. Toubiana, {\it Concentration of total curvature of minimal surfaces in $\hh \times \rr$},arXiv:1603.03335.
  
  \bibitem{Fischercolbrie} D. Fischer-Colbie and R. Schoen, {\it The
    structure of complete stable minimal surfaces in 3-manifolds of
    non-negative scalar curvature}, Comm. Pure Appl. Math.,  {\bf 33}
     (1980), 199--211.


\bibitem{FP} A. Folha and C. Pe\~nafiel, {\it Minimal graphs in $\ppsl$}, preprint.


\bibitem{h}
  L. Hauswirth,
  {\em Minimal surfaces of Riemann type in three-dimensional product
    manifolds},
  Pacific J. Math., {\bf 224} (2006), 91--117. 


%\bibitem{hmp} L. Hauswirth, J. Manzano and C. Pe\~nafiel, \textit{On existence of surfaces with finite total curvature in $\ppsl$}, preprint.	
	

\bibitem{hnst}
L. Hauswirth, B. Nelli, R. Sa Earp and  E. Toubiana,
\newblock {\em Minimal ends in $\hr$ with finite total
curvature and a Schoen type theorem}, Advances in Mathematics, {\bf 274} (2015), 199--240.

\bibitem{hr}
L. Hauswirth and H. Rosenberg,
\newblock {\em Minimal surfaces of finite total curvature in $\mathbb H\times\mathbb
R$},
\newblock Mat. Contemp., {\bf 31} (2006), 65--80  .

\bibitem{MMR} 
F. Mart\'\i n, R. Mazzeo and M.M. Rodr\'{\i}guez, 
\newblock {\em Minimal surfaces with positive genus and finite total
  curvature in $\hh\times\rr$}, 
\newblock Geometry and Topology, {\bf 18} (2014), 141--177. 
%\newblock DOI: 10.2140/gt.2014.18.141.

\bibitem{mrr}
  L. Mazet, M.M. Rodr\'{\i}guez and H. Rosenberg,
  {\em The Dirichlet problem for the minimal surface equation with
    possible infinite boundary data over domains in a Riemannian
    surface},
  Proc. London Math. Soc., {\bf102} (2011), 985--1023. %DOI:10.1112/plms/pdq032. 

\bibitem{MazetRodriguezRosenberg} L. Mazet, M. Magdalena Rodr\' iguez
  and H. Rosenberg, \textit{Periodic constant mean curvature surfaces
    in $\mathbb H\times \rr$}, Asian J. Math., {\bf 18} (2014), 829--858.
    
\bibitem{melo} S. Melo, {\it Minimal graphs in $\ppsl$ over unbounded domains}, 
Bull. Braz. Math. Soc., New Series, {\bf 45}  (2014), 91--116.		


\bibitem{MoR}
F.~Morabito and M.M.~Rodr\'\i guez, {\it Saddle towers and minimal $k$-noids in
  $\mathbb{H}^2\times\mathbb{R}$},
\newblock {J. Inst. Math. Jussieu}, {\bf 11} (2012), 333--349.
%\newblock DOI: 10.1017/S1474748011000107.

\bibitem{pe} C. Penafiel, {\it Invariant surfaces in $\ppsl$ and
  applications}, Bull Braz Math. Soc., New Series, {\bf 43} (2012), 545-578.


\bibitem{mn} M. Nguyen \textit{On existence of surfaces with finite total curvature in $\ppsl$}, preprint.	

\bibitem{p} J.~Pyo, {\it New complete embedded minimal surfaces in
  $\mathbb{H}^2\times\mathbb{R}$}, \newblock {Ann. Glob. Anal. Geom.},
  {\bf 40} (2011), 167--176.

	
\bibitem{PR}
 J.~Pyo and M.M.~Rodr\'\i guez, {\it
\newblock Simply-connected minimal surfaces with finite total curvature in
  $\mathbb{H}^2\times\mathbb{R}$}, Int. Math. Res. Notices, {\bf 2014} (2014), 2944-2954. 
%\newblock DOI: 10.1093/imrn/rnt017.



\bibitem{MT} M.M. Rodr\' iguez and G. Tinaglia, {\it Non-proper complete minimal surfaces
    embedded in $\hr$}, Int. Math. Res. Not., {\bf 2015} (2015): 4322--4334.

\bibitem{RST} H. Rosenberg, R. Souam and E. Toubiana, {\it General
    curvature estimates for stable $H$-surfaces in 3-manifolds and
    aplications}, J. Dif. Geom., {\bf 84} (2010), 623--648.
		
\bibitem{ST} R. Sa Earp and E. Toubiana, {\it Screw motion surfaces in
$\mathbb S^2\times\rr$ and $\hr$}, Illinois J. Math., {\bf 49} (2005), 1323--1362.


\bibitem{S} R. Schoen, {\it Estimates for stable minimal surface in three dimensional manifolds}, Ann. of Math. Studies,
  \textbf{103} (1983), 127--146.
  
  \bibitem{younes} R. Younes, {\it Minimal surfaces in $\ppsl$},
    Illinois J. Math., {\bf 54} (2010), no. 2, 671--712.


\end{thebibliography}
\end{document}